\documentclass[11pt,reqno]{amsart}

\usepackage{amsfonts,amsmath,amssymb,amsthm}
\usepackage{anysize}
\usepackage{mathtools}
\marginsize{2cm}{2cm}{1.5cm}{1.5cm}
\usepackage[utf8]{inputenc}
\usepackage[english]{babel}
\usepackage{cmap}
\usepackage{enumerate}
\usepackage{xcolor}
\usepackage{hyperref}
\hypersetup{
  colorlinks = true,
  urlcolor = blue,
  linkcolor = blue,
  citecolor = red,
  pdftitle = {A colorful Steinitz theorem with different centers},
  pdfauthor = {Grigory Ivanov},
  pdfsubject = {Combinatorial convexity}
}

\newtheoremstyle{theorem}%
  {\topsep}{\topsep}{\itshape}{}%
  {\bfseries}{.}{5pt plus 1pt minus 1pt}{}
\theoremstyle{theorem}
\newtheorem{thm}{Theorem}[section]
\newtheorem{prp}{Proposition}[section]
\newtheorem{lem}{Lemma}[section]
\newtheorem{cor}{Corollary}[section]
\theoremstyle{definition}
\newtheorem{dfn}{Definition}[section]

\numberwithin{equation}{section}
\newcommand{\Href}[2]{\hyperref[#2]{#1~\ref{#2}}}
\newcommand{\st}{:\;}
\newcommand{\norm}[1]{\left\|#1\right\|}
\newcommand{\enorm}[1]{\left|#1\right|}
\newcommand{\conv}{\mathrm{conv}}
\newcommand{\iprod}[2]{\left\langle#1,#2\right\rangle}
\providecommand{\parenth}[1]{\left(#1\right)}
\providecommand{\braces}[1]{\left\{#1\right\}}
\providecommand{\brackets}[1]{\left[#1\right]}
\def\R{{\mathbb R}}
\def\Sph{{\mathbb S}}
\def\Lin{\mathop{\rm Lin}}
\def\dim{\mathop{\rm dim}}
\def\distan{\mathrm{dist}}
\newcommand{\poscone}[1]{\mathrm{Pos}\ \! {#1}}
\newcommand{\reccone}[1]{\mathrm{recc}\ \! {#1}}
\providecommand{\abs}[1]{\lvert#1\rvert}
\providecommand{\card}[1]{\lvert#1\rvert}
\newcommand{\ball}[1]{\mathbf{B}^{#1}}

\providecommand{\Bd}{\ball{d}}
\providecommand{\Sd}{\Sph^{d-1}}
\providecommand{\smin}{s_{\min}}
\providecommand{\widthu}{\operatorname{w}_u}
\DeclareMathOperator{\ri}{ri}

\title[A colorful Steinitz theorem with different centers]
{A colorful Steinitz theorem with different centers}
\author{Grigory Ivanov}
\email{grimivanov@gmail.com}
\date{}
\subjclass[2020]{52A35, 52A37}
\keywords{Colorful Steinitz theorem, quantitative convexity, colorful Helly theorem, convex cones}

\begin{document}

\begin{abstract}
We prove a colorful quantitative Steinitz theorem in which the color
classes may have different centers. If the convex hull of each of
$2d$ sets in $\R^d$ contains a translate of the Euclidean unit ball,
then a rainbow convex hull contains a ball of radius
$(3d)^{-2d^2}$ whose center belongs to the convex hull of the given
centers. The main ingredient is an exact result for translates of a
segment, proved by a lifting and a topological colorful Helly theorem.
The same idea also yields a sharp colorful Helly theorem for translated
cones: for $1 \le k \le d-1$, if every rainbow selection from
$2d-k+1$ finite families of convex sets in $\R^d$ has a $k$-dimensional
cone in its intersection, then the intersection of one of the families
contains such a cone.
\end{abstract}

\maketitle

\section{Introduction}

The goal of this paper is to establish a colorful quantitative version of
the following classical theorem of Steinitz \cite{steinitz1913bedingt}.
For a set $X \subset \R^d$, we write $\conv X$ for its convex hull.
\begin{prp}[Steinitz theorem]
\label{prp:classical_steinitz}
Let $S \subset \R^d$, and suppose that the origin belongs to the interior
of $\conv S$.  Then there is a set $Y \subset S$ of at most $2d$ points
such that the origin belongs to the interior of $\conv Y$.
\end{prp}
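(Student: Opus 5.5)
We prove the classical Steinitz theorem by induction on the dimension $d$, using Carathéodory's theorem as the basic tool. The idea is to find one point of $S$ together with an almost opposite point, and then split off a lower-dimensional problem in a complementary hyperplane.

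First, since $0 \in \operatorname{int}\conv S$, we may pass to a finite subset. The interior of $\conv S$ contains a small cross-polytope $\conv\{\pm\varepsilon e_i\}$. Each of its $2d$ vertices lies in $\conv S$, so by Carathéodory each is a convex combination of finitely many points of $S$. Hence $0$ lies in the interior of $\conv S'$ for some finite $S' \subset S$, and we may assume $S$ is finite.

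For $d=1$ the claim is immediate: $S$ contains a positive and a negative number, and these two points suffice.

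For the inductive step, pick any $s_1 \in S$, with $s_1 \neq 0$ necessarily since it can be chosen so. Since $0$ is interior, the point $-\delta s_1$ lies in $\conv S$ for small $\delta>0$. By Carathéodory, $-\delta s_1$ lies in the convex hull of at most $d+1$ points of $S$. Then, by the standard Carathéodory-type reduction along the ray $\{-t s_1 : t>0\}$, the ray meets the relative interior or boundary of a face, and so $-t s_1 \in \conv T$ for some $t>0$ and some $T \subset S$ with $|T| \le d$.

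Next, let $\pi$ be the orthogonal projection onto the hyperplane $H = s_1^{\perp}$. The set $\pi(S)$ has $0$ in the interior (relative to $H$) of its convex hull. By induction there is $Y' \subset S$ with $|Y'| \le 2d-2$ such that $0 \in \operatorname{int}_H \conv \pi(Y')$.

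Set $Y = Y' \cup \{s_1\} \cup T$. To show $0 \in \operatorname{int}\conv Y$, it suffices to check that every linear functional $f \neq 0$ has $\max_Y f > 0$. If $f$ is not a multiple of $\langle s_1,\cdot\rangle$, we can adjust $f$ by a multiple of $s_1^*$ so as to use $Y'$. Both rays $\pm s_1$ are covered by $\conv Y$, so the cone generated by $Y$ contains the lifts of $\pi(Y')$ together with the full line $\mathbb{R}s_1$. Hence the cone generated by $Y$ is all of $\R^d$, which is equivalent to $0 \in \operatorname{int}\conv Y$ for finite $Y$.

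This argument, however, gives a bound of $2d-2+1+d$, which is too large. This is the main obstacle, and fixing it requires choosing $T$ more cleverly. The fix is to choose $s_1$ to be a vertex and to take $T$ to be a single point. Instead of working with $-s_1$, we apply induction first. Project along a direction $u$, where $s_1 = $ some point of $S$ and $u$ is chosen so that $-s_1$ lies on a ray through an edge. Cleanest is the classical route: take $s_1 \in S$ and let $-\lambda s_1$ be the boundary point of $\conv S$ on the ray opposite to $s_1$. This boundary point lies in a facet, hence in $\conv F$ with $F \subset S$ affinely spanning a $(d-1)$-flat.

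We instead choose $s_1, s_2 \in S$ with the segment $[s_1,s_2]$ containing $0$ in its relative interior. Such a pair exists after a perturbation-free argument: take a minimal subset $Z \subseteq S$ whose convex hull contains $0$ in its relative interior, with $0$ on the line through $s_1, s_2$, achieved by Carathéodory in the quotient. Then project along $\mathrm{Lin}\{s_1\}$ and apply induction to get $2d-2$ points. Together with $s_1$ and $s_2$, which positively span the line $\R s_1$, the positive hull of these $2d$ points is $\R^d$. Justifying the existence of an antipodal pair $s_1, -c s_2$ in $S$ is not generally possible. So the honest approach is the classical one: take the boundary point $p=-\lambda s_1$, lying in $\conv F$ with $|F|\le d$. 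Project along $s_1$ and apply induction to $\pi(S)$, obtaining $2d-2$ points. The delicate counting step is to show that $F$ can be absorbed into $Y'$ plus one extra point, which I expect to be the real difficulty.
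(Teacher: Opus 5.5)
The paper does not prove this statement; it only quotes it as Steinitz's classical theorem. Judged on its own, your proposal has a genuine gap: it never reaches the bound $2d$. Your one complete argument correctly yields $\poscone{Y}=\R^d$, but only with $\card{Y}\le 3d-1$. That argument projects along $\R s_1$, obtains $Y'$ with $\card{Y'}\le 2d-2$ by induction, and adds $s_1$ together with a Carath\'eodory set $T$ for a point of the ray $\braces{-ts_1 \st t>0}$.

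The repairs that follow do not work. $S$ need not contain a point on the ray opposite to $s_1$, nor an antipodal pair, as you observe yourself. The step you leave open, absorbing $T$ into $Y'$ plus one extra point, cannot be carried out as set up, because $Y'$ comes from a projection that ignores $T$ entirely.

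The missing idea is to quotient by the whole span of $T\cup\braces{s_1}$ rather than by $\R s_1$.

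\begin{enumerate}[(1)]
\item Using Carath\'eodory's theorem for cones, choose $T\subset S$ linearly independent with $-s_1=\sum_{y\in T}\mu_y y$ and all $\mu_y>0$. Put $L:=\Lin T$ and $k:=\dim L=\card{T}\ge 1$, so $s_1\in L$.
\item Because every coefficient is positive, $-y=\mu_y^{-1}\bigl(s_1+\sum_{y'\in T\setminus\braces{y}}\mu_{y'}y'\bigr)$ for each $y\in T$. So the $k+1$ vectors of $T\cup\braces{s_1}$ already positively span $L$.
\item The orthogonal projection of $S$ onto $L^\perp$ still positively spans $L^\perp$. Induction in dimension $d-k$ therefore gives $Y'\subset S$ with $\card{Y'}\le 2(d-k)$ whose projection positively spans $L^\perp$ (take $Y'=\emptyset$ if $k=d$).
\item Then $\poscone{\parenth{Y'\cup T\cup\braces{s_1}}}\supseteq L+\poscone{Y'}=\R^d$, using $2(d-k)+k+1=2d-k+1\le 2d$ points.
\end{enumerate}

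The gain is that quotienting by a $k$-dimensional subspace saves $2k$ points in the induction at a cost of only $k+1$.

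The textbook alternative avoids induction. A line through the origin in general position meets the boundary of $\conv S$ at two points lying in the relative interiors of $(d-1)$-simplices with vertex sets $A,B\subset S$, $\card{A}=\card{B}=d$. Shifting the line slightly then shows that the origin is interior to $\conv\parenth{A\cup B}$.
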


B\'ar\'any, Katchalski, and Pach obtained the first quantitative version
\cite{barany1982quantitative}.  Let $\Bd$ denote the Euclidean unit ball
in $\R^d$.  We shall use the following polynomial estimate
\cite[Theorem~1]{ivanov2024steinitz}.
\begin{prp}
\label{prp:quantitative_steinitz}
Let $X \subset \R^d$.  If
\[
\Bd \subset \conv X,
\]
then there is $Y \subset X$ such that
\[
 \card{Y} \le 2d
 \qquad\text{and}\qquad
 \frac{1}{6d^2}\Bd \subset \conv Y.
\]
\end{prp}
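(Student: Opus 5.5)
We prove the statement in three steps: reduce to a finite set, find a small subset whose hull contains a ball of radius $1/(2d)$, and then pass to a Steinitz subset with a further loss of $1/(3d)$. The target is therefore $\frac{1}{2d}\cdot\frac{1}{3d}=\frac{1}{6d^2}$.

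\emph{Reduction to a finite set.} The unit ball $\Bd$ is compact and lies in $\conv X$. Since $2\Bd$ is not assumed to lie in $\conv X$, we argue with a polytope instead. Take a regular simplex $\Delta$ circumscribed about $\Bd$ and then shrunk by a factor close to $1$. Alternatively, observe that $\conv X$ contains $\Bd$, hence contains any finite set in $\Bd$. It suffices to produce $Y$ with $c\Bd\subset\conv Y$, where $Y$ comes from finitely many points of $X$. Each point of $\Bd$ lies in the convex hull of at most $d+1$ points of $X$ by Carath\'eodory. By compactness, $\Bd$ is covered by the hulls of finitely many points of $X$ up to slack, so we may assume $X$ is finite and $(1-\varepsilon)\Bd\subset\conv X$. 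The $\varepsilon$ is absorbed at the end.

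\emph{A small set whose hull contains $\frac{1}{2d}\Bd$.} Work with the polytope $P=\conv X$, which satisfies $\Bd\subset P$. Choose a maximal-volume simplex $\Delta=\conv\{x_0,\dots,x_d\}$ with vertices in $X$. By the classical estimate, $P\subset -d(\Delta-g)+g$, where $g$ is the centroid of $\Delta$. Hence $\Delta$ contains $g+\frac{1}{d}(P-g)$, and in particular contains $g+\frac1d(\Bd-g)$.

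This is a ball of radius $1/d$ around $(1-\frac1d)g$, but it may not be centered at the origin. To recenter, we add the points $x_i'$ of $X$ that are extremal in the directions $-(x_i-g)$. These $d+1$ extra points give at most $2d+2$ points. The better approach is to apply Proposition~\ref{prp:classical_steinitz} quantitatively.

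\emph{Quantitative Steinitz step.} Take a finite $Z\subset X$, of size $O(d)$, with $\frac{1}{2d}\Bd\subset\conv Z$. Then run the standard proof of Steinitz: pick a line $\ell$ through the origin, use Carath\'eodory in the two halfspaces, and induct on dimension. At each step we track the inradius using the John ellipsoid of the relevant sections. The goal is a loss of only a factor $\frac{1}{3d}$ overall, not a factor per step.

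\emph{Main obstacle.} The hard part is avoiding exponential loss in the dimensional induction. We would first try the following. Write the origin as a positive combination using the Steinitz decomposition $0\in\ri\conv Y_1$ and $0\in\ri \conv Y_2$, where $Y_1$ and $Y_2$ lie in complementary subspaces. Then bound the inradius of $\conv(Y_1\cup Y_2)$ below by the product of width estimates, controlling these via the supporting-function inequality $h_{\conv Y}(u)\ge \frac{1}{6d^2}$ for every unit $u$. This last inequality is what must be verified directly. For each $u$, some point of $Y$ has $\iprod{y}{u}\ge\frac{1}{6d^2}$, and we would prove it by choosing $Y$ to maximize an appropriate determinant (a Lewis/John-type normalization), so that the bound holds uniformly in $u$.
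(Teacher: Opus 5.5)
The paper does not prove this proposition; it imports it as \cite[Theorem~1]{ivanov2024steinitz}. Judged on its own, your proposal has a genuine gap. It is an outline whose two load-bearing steps are asserted, not proved.

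First, the set $Z\subset X$ with $\card{Z}=O(d)$ and $\frac{1}{2d}\Bd\subset\conv Z$ is never produced. The maximal-volume simplex gives $P\subset g-d(\Delta-g)$. Hence $\Delta\supset g-\frac1d(P-g)\supset(1+\frac1d)g+\frac1d\Bd$; your sign is wrong. This ball need not contain the origin. For $P=[-1,1]^2$ and the maximal triangle with vertices $(-1,-1)$, $(1,-1)$, $(1,1)$, it is the disc of radius $\frac12$ about $(\frac12,-\frac12)$, and the origin lies on the boundary of $\Delta$. You then suggest recentering with $d+1$ further points, but you give no estimate for the resulting inradius about the origin, and you drop the idea.

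Second, and more seriously, passing from $Z$ to $2d$ points with total loss $\frac{1}{3d}$ is essentially the theorem itself, and nothing in the proposal achieves it.
\begin{itemize}
\item The inductive Steinitz argument (a line through the origin, Carath\'eodory on the two boundary facets, recursion) loses a $d$-dependent factor at each of up to $d$ levels. Without a new idea it gives radii exponentially small in $d$, and ``tracking the John ellipsoid'' does not change this.
\item Your decomposition step is false as stated. Suppose $0\in\ri\conv Y_1$ and $0\in\ri\conv Y_2$ with $Y_1,Y_2$ in complementary subspaces. The inradius of $\conv(Y_1\cup Y_2)$ is not controlled by the widths inside the subspaces, because it tends to $0$ with the angle between them. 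For example, take $Y_1=\{(1,0),(-1,0)\}$ and $Y_2=\{\pm(\cos\theta,\sin\theta)\}$ in the plane; the inradius about the origin is $\sin(\theta/2)$. Controlling that angle is exactly where the loss occurs.
\item The condition $h_{\conv Y}(u)\ge\frac{1}{6d^2}$ for all $u\in\Sd$ is equivalent to $\frac{1}{6d^2}\Bd\subset\conv Y$. So ``verifying it directly'' by maximizing an unspecified determinant is a restatement of the goal, not a step toward it. The one natural determinant maximizer, the maximal simplex, already fails by the square example.
\end{itemize}
A smaller point: your reduction to finite $X$ only gives $(1-\varepsilon)\Bd\subset\conv X$. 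The $\varepsilon$ can be absorbed only if the finite-case bound has slack, and an exact target of $\frac{1}{2d}\cdot\frac{1}{3d}$ has none.
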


Throughout the paper, we fix an integer $d \ge 2$.  For a positive
integer $n$, write
\(
 \brackets{n} := \braces{1, \dots, n}.
\)
We write $\Sd$ for the Euclidean unit sphere in $\R^d$.  In the segment statements,
$u \in \Sd$ denotes a unit vector, and we put
\[
 \brackets{-u,u} := \braces{\lambda u \st \abs{\lambda} \le 1}.
\]

Given sets $X_1, \dots, X_m$, regarded as color classes, a
\emph{transversal}, or a \emph{rainbow choice}, is a labeled choice
$x_i \in X_i$ for every $i \in \brackets{m}$. Points with the same
geometric position but different color labels are treated as distinct.

A common-center colorful counterpart was proved by De Loera, La Haye,
Rolnick, and Sober\'{o}n \cite[Lemma~2.6]{de2017quantitative}.
\begin{prp}[Colorful quantitative Steinitz theorem]
\label{prp:colorful_quantitative_steinitz}
Let $X_1, \dots, X_{2d} \subset \R^d$.  Suppose that
\[
 \Bd \subset \conv X_i,
 \qquad i \in \brackets{2d}.
\]
Then there is a transversal $x_i \in X_i$, $i \in \brackets{2d}$,
such that
\[
  \frac{1}{3}d^{-2d-2} \ \Bd
 \subset \conv\braces{x_1, \dots, x_{2d}}.
\]
\end{prp}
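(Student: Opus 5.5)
The plan is to select the transversal greedily, one color at a time, using the support-function form of the hypothesis. For a finite set $Z$ and $u \in \Sd$ write $h_Z(u) := \max_{z \in Z} \iprod{z}{u}$. Then $\Bd \subset \conv X_i$ is equivalent to $h_{X_i}(u) \ge 1$ for every $u \in \Sd$. The conclusion $r\Bd \subset \conv\braces{x_1,\dots,x_{2d}}$ is equivalent to $\min_{u\in\Sd} h_{\{x_1,\dots,x_{2d}\}}(u) \ge r$.

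First, by \Href{Proposition}{prp:quantitative_steinitz} we may replace each $X_i$ by a subset of at most $2d$ points. This costs a factor $\frac{1}{6d^2}$ in the inner radius. After rescaling, we may also assume every point of every $X_i$ has norm at most some $R = R(d)$. To justify this, discard far points, or replace a far point $x$ by the point where the segment from the origin towards $x$ exits a suitable ball; this shrinks hulls only by a controlled factor. The norm bound matters because it turns "the support is positive in a direction" into "the support is quantitatively positive on a neighbourhood of that direction".

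The greedy step runs as follows. Let $Z_k = \braces{x_1,\dots,x_k}$ and let $u_k \in \Sd$ minimise $h_{Z_k}$; for $k=0$ take $u_0$ arbitrary. Choose $x_{k+1} \in X_{k+1}$ with $\iprod{x_{k+1}}{u_k} \ge 1$, which is possible by the hypothesis. We need an invariant showing that after $2d$ steps the minimum of $h_{Z_{2d}}$ is bounded below. The natural candidate follows the proof of \Href{Proposition}{prp:classical_steinitz}: keep track of the linear span and the positive cone of $Z_k$.

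In the first $d$ steps we aim to choose points so that $Z_d$ spans $\R^d$ with a lower bound on the volume of $\conv(\braces{0}\cup Z_d)$. Each new point gains at least a factor $\gtrsim d^{-1}$ in height over the span of the previous ones, obtained by choosing $u_k$ in the orthogonal complement of $\Lin Z_k$. In the next $d$ steps we must kill the remaining "bad" directions, namely those $u$ with $h_{Z_k}(u) \le 0$. These form a closed cone $C_k$, the polar of $\poscone{Z_k}$. Choosing $x_{k+1}$ with $\iprod{x_{k+1}}{u}\ge 1$ for $u$ an extreme-type direction of $C_k$ should reduce the dimension of the face structure of $C_k$, in the same way as Steinitz's argument uses at most $d$ extra points to capture the origin in the interior.

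Finally we quantify. At each step, the inner radius or the volume of the relevant polytope should degrade by a factor of order $d^{-1}$ to $d^{-2}$. Using $\abs{x}\le R$ to convert volume bounds into inner-radius bounds, we expect a final radius of the shape $c\, d^{-2d-2}$.

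The main obstacle is the second phase: showing that $d$ additional colored points suffice to make $0$ an interior point with a quantitative margin. The difficulty is that in the colorful setting we cannot pick the point opposite a whole face; we get only one point from each fresh color class. I would first try to work with the minimising direction $u_k$ and prove that $\min h_{Z_{k+1}}$ increases by a definite amount. If that fails, I would instead invoke Bárány's colorful Carath\'eodory theorem on a small cross-polytope $\braces{\pm\rho e_j}$, applied within the successive faces.
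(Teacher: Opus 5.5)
The paper gives no proof of this proposition; it quotes it from \cite[Lemma~2.6]{de2017quantitative}. Your sketch therefore has to stand on its own, and as written it has two genuine gaps.

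The first gap is the reduction to points of norm at most $R(d)$, which is false. In $\R^2$ let $X=\braces{(0,2),(-L,-1),(L,-1)}$ with $L\ge\sqrt3$. Its sides lie at distance $1$ and $2L/\sqrt{L^2+9}\ge1$ from the origin, so $\Bd\subset\conv X$. No proper subset of $X$ has interior, so \Href{Proposition}{prp:quantitative_steinitz} cannot thin it. Discarding the far points leaves no ball at all. Truncating radially to radius $R$ moves the base vertices to height $-R/\sqrt{L^2+1}$, so the new hull contains no ball about the origin of radius larger than $R/\sqrt{L^2+1}\to0$. If every color class equals $X$, every good transversal must use both far vertices. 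So a proof has to cope with arbitrarily distant points, and the Lipschitz control of $h_Z$ and the volume-to-inradius conversion you build on are not available.

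The second gap is more serious. The second phase, which is the whole content of the proposition, is not proved, and the rule you propose fails even qualitatively. Take $d=2$ and $T=\braces{(0,-10),(10,10),(-10,10)}$, so that $\Bd\subset\conv T$. Let $X_i=T\cup\braces{x_i}$ with
\[
 x_1=(1,0),\quad x_2=(-1,100),\quad x_3=(-2.1,50),\quad x_4=(-5,50).
\]
Use $u_0=e_1$ and $u_1=e_2\perp\Lin Z_1$ (or $u_1=-e_1$, the minimizer). Then take the minimizers $u_2\approx(-0.9998,-0.0200)$ of $h_{Z_2}$ and $u_3\approx(-0.9981,-0.0619)$ of $h_{Z_3}$. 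One checks that $\iprod{x_{k+1}}{u_k}\ge1$ at every step. Moreover, $x_3$ and $x_4$ also cut off an extreme ray of $C_2$ and $C_3$ respectively, so the ``extreme-type direction'' variant fares no better. Yet all four points lie in the half-plane $\braces{y\ge0}$, so $0\notin\conv Z_4$. Also $\min h_{Z_k}$ stays below $-0.99$ for $1\le k\le4$, so there is no definite increase.

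The reason is structural. A single new point intersects the bad cone $C_k$ with one half-space, which generically leaves it full-dimensional. Steinitz's argument gets its dimension reduction from several points on the face of one set $\conv X$ opposite a chosen point. That freedom is lost when each fresh color contributes one point and the color classes have different faces.

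Your fallback, colorful Carath\'eodory on $\braces{\pm\rho e_j}$, meets the same wall. It captures one prescribed point using $d+1$ colors, whereas you need a single rainbow set of $2d$ points that captures all $2d$ vertices simultaneously. Without an actual selection rule and a proof that it works, the radius $c\,d^{-2d-2}$ is only a guess.
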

The choice of a center is an essential part of polarity arguments in
quantitative convexity  \cite{ivanov2022quantitative,  ivanov_QST_polarity_2025}.  It is therefore natural to try to prove a
colorful quantitative statement by finding a single center that is
simultaneously good for all colors.

This was the approach we first tried for colorful quantitative
Steinitz-type statements, motivated also by colorful quantitative Helly
theorems for volume and diameter
\cite{soberon2016helly,damasdi2021colorful,Ivanov2026Hellynumbers}.
Our attempts to find such a common center were unsuccessful.  This
suggests that the difficulty may be conceptual rather than merely
technical: in a colorful problem, searching for one common good center
may be the wrong starting point.  The present paper instead allows each
color to retain its own natural center.

The following equivalent reformulation of the colorful
Carath\'{e}odory
theorem \cite{barany1982generalization} provides a qualitative model for
this point of view.
\begin{prp}
\label{prp:colorful_caratheodory_centers}
Let $X_1, \dots, X_{d+1} \subset \R^d$, and let
$c_i \in \conv X_i$ for every $i \in \brackets{d+1}$.  Then there is a
transversal $x_i \in X_i$, $i \in \brackets{d+1}$, such that
\[
 \conv\braces{x_1, \dots, x_{d+1}}
 \cap
 \conv\braces{c_1, \dots, c_{d+1}}
 \ne \emptyset.
\]
\end{prp}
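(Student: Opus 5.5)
The plan is to derive Proposition~\ref{prp:colorful_caratheodory_centers} from the classical colorful Carath\'eodory theorem of B\'ar\'any \cite{barany1982generalization}. That theorem says: if $Y_1,\dots,Y_{d+1}\subset\R^d$ and a point $p$ lies in $\conv Y_i$ for every $i$, then some transversal $y_i\in Y_i$ has $p\in\conv\braces{y_1,\dots,y_{d+1}}$. The difficulty is that the centers $c_i$ are different, so there is no common point $p$ to feed into it. I will manufacture one by a lifting trick that turns the intersection condition into a common-point condition in a higher-dimensional space.

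The key observation is that the two convex hulls meet if and only if there are convex coefficients $\lambda_i,\mu_i\ge 0$ with $\sum\lambda_i=\sum\mu_i=1$ and $\sum\lambda_i x_i=\sum\mu_i c_i$. Equivalently, with weights $\alpha_i=\lambda_i/2$ and $\beta_i=\mu_i/2$, the origin is a convex combination of the points $(x_i,1)$ and $(-c_i,-1)$ in $\R^{d+1}$. The last coordinate forces $\sum\alpha_i=\sum\beta_i$, and the first $d$ coordinates give $\sum\alpha_i x_i=\sum\beta_i c_i$.

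Rather than work with $2d+2$ color classes, I will pair each point with its own center. Define
\[
Y_i:=\braces{\tfrac12\parenth{x-c_i}\st x\in X_i}\subset\R^d .
\]
Since $c_i\in\conv X_i$, the origin lies in $\conv Y_i$ for every $i$. Applying colorful Carath\'eodory with $p=0$ gives a transversal $x_i\in X_i$ and convex coefficients $t_i$ with $\sum t_i(x_i-c_i)=0$, that is, $\sum t_ix_i=\sum t_ic_i$. This point lies in both $\conv\braces{x_1,\dots,x_{d+1}}$ and $\conv\braces{c_1,\dots,c_{d+1}}$, which proves the statement. So no lifting is actually needed: the differences $x-c_i$ already reduce everything to a common center at the origin.

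This argument also explains why the proposition is called an equivalent reformulation. Conversely, taking all $c_i=p$ recovers the classical colorful Carath\'eodory theorem. The main point requiring care is the use of the same coefficients $t_i$ on both sides, which is exactly what the difference sets $Y_i$ enforce. I expect no other obstacle beyond degenerate cases, such as infinite $X_i$. These cases are handled because $c_i\in\conv X_i$ already lies in the convex hull of finitely many points of $X_i$ by Carath\'eodory, so each $X_i$ may be replaced by that finite subset.
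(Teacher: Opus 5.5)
Your argument is correct. It is exactly the reduction the paper has in mind when it calls this proposition an equivalent reformulation of B\'ar\'any's colorful Carath\'eodory theorem (the paper states it without proof): translating each class by its own center puts the origin in every $\conv(X_i - c_i)$, and a rainbow combination $\sum t_i(x_i - c_i) = 0$ yields the common point $\sum t_i x_i = \sum t_i c_i$. The lifting to $\R^{d+1}$ in your second paragraph and the factor $\tfrac12$ in $Y_i$ are unnecessary and can be dropped.
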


In this sense, our first main result is a quantitative Steinitz analogue
of the preceding proposition: the assumptions $c_i \in \conv X_i$ are
strengthened to $c_i + \Bd \subset \conv X_i$, and the common point can
be chosen as the center of a Euclidean ball of explicit radius contained
in the rainbow convex hull.

\begin{thm}[Colorful Steinitz theorem with different centers]
\label{thm:steinitz}
Let $X_1, \dots, X_{2d} \subset \R^d$, and let
$c_1, \dots, c_{2d} \in \R^d$.  Suppose that
\[
 c_i + \Bd \subset \conv X_i,
 \qquad i \in \brackets{2d}.
\]
Then there is a transversal $x_i \in X_i$, $i \in \brackets{2d}$, and
a point
\[
 c \in \conv\braces{c_1, \dots, c_{2d}}
\]
such that
\[
 c +  \parenth{3d}^{-2d^2} \Bd
 \subset \conv\braces{x_1, \dots, x_{2d}}.
\]
\end{thm}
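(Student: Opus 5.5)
The plan is to first reduce to finite, well-conditioned color classes, and then build the rainbow ball one dimension at a time, spending two colors per dimension. Each dimension step uses the exact result for translates of a segment announced in the abstract (proved later by a lifting and a topological colorful Helly theorem).

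\emph{Reduction.} Apply Proposition~\ref{prp:quantitative_steinitz} to each translate $X_i - c_i$. This gives $Y_i \subset X_i$ with $\card{Y_i} \le 2d$ and $c_i + \frac{1}{6d^2}\Bd \subset \conv Y_i$. From now on all color classes are finite, and after rescaling we may assume $c_i + \Bd \subset \conv Y_i$. The price is a factor $6d^2$ in the final radius. Finiteness also makes all the compactness and topological arguments below legitimate.

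\emph{Segment step.} The key ingredient I would formulate and prove first is the following.

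\emph{Segment lemma.} Suppose finite sets $Z_1,\dots,Z_m$ satisfy $c_i + [-u,u] \subset \conv Z_i$, where $m$ is the number of colors needed for the relevant dimension. Then there is a transversal $z_i \in Z_i$ and a point $c \in \conv\{c_i\}$ with $c + [-u,u] \subset \conv\{z_i\}$.

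To prove it, lift each point $x \in \R^d$ to $(x,\iprod{x}{u}) \in \R^{d+1}$, or equivalently work with the two half-spaces $\iprod{\cdot}{u} \gtrless \iprod{c}{u} \pm 1$. Then encode "the segment $c+[-u,u]$ lies in the rainbow hull" as a nonempty intersection of rainbow families of convex sets parametrized by $c \in \conv\{c_i\}$. The hypothesis $c_i + [-u,u] \subset \conv Z_i$ makes every color class individually "good". A topological colorful Helly theorem, applied to the nerve of these sets, then produces the rainbow selection. This is the place where the different centers are handled: the common point is sought in $\conv\{c_i\}$ rather than being fixed in advance.

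\emph{Induction on dimension.} Fix $u \in \Sd$ and let $\pi$ be the orthogonal projection onto $H = u^\perp$.

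1. Apply the $(d-1)$-dimensional statement inside $H$ to $\pi(Y_1),\dots,\pi(Y_{2d-2})$ with centers $\pi(c_i)$. This gives a transversal whose projected hull contains $c' + r_{d-1}\ball{d-1}$ with $c' \in \conv\{\pi(c_i)\}$.
2. Lift back. The chosen points $y_1,\dots,y_{2d-2}$ form a polytope $P$ whose shadow on $H$ is large, but $P$ may be flat and tilted in the direction $u$.
3. Use the remaining two colors $Y_{2d-1}, Y_{2d}$, together with the segment lemma in direction $u$ (suitably adapted to the affine fibre over $c'$), to obtain two points lying at distance $\gtrsim 1$ above and below $P$ along the line $c' + \R u$.
4. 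Find a center $c \in \conv\{c_1,\dots,c_{2d}\}$ on this line. This is possible because the vertical fibre over $c'$ meets $\conv\{c_i\}$.
5. An elementary "double pyramid" estimate then shows that $\conv\{y_1,\dots,y_{2d}\}$ contains $c + r_d\Bd$ with $r_d \ge r_{d-1}/(C d)^{2d}$. The loss comes from the tilt of $P$, which is bounded because all points lie in a ball of radius comparable to $\max_i|c_i|$ plus the diameters of the $Y_i$. Control of these diameters comes from $\card{Y_i} \le 2d$ and a Steinitz-type choice of well-conditioned $Y_i$.

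Iterating over $d$ steps gives $r_d \ge (3d)^{-2d^2}$ after absorbing constants.

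\emph{Main obstacle.} I expect the hardest part to be making steps 3 and 4 compatible: the two remaining colors must supply a vertical segment centered exactly on the fibre over $c'$, at a center inside $\conv\{c_i\}$. This is precisely why the exact (lossless) segment lemma with flexible center is needed. Only the topological colorful Helly argument lets the center move, whereas a fixed-center argument fails. The quantitative tilt estimate in step 5 should be routine by comparison.
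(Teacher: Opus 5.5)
\textbf{There is a genuine gap in your dimension induction (steps 3--5).} Your reduction via \Href{Proposition}{prp:quantitative_steinitz} matches the paper. The trouble starts when you split the colors: $2d-2$ of them handle $u^\perp$ and two handle the direction $u$. This split is exactly what different centers forbid. The position of $P$ is fixed by $Y_1,\dots,Y_{2d-2}$ and can be arbitrarily far from $c_{2d-1},c_{2d}$. Nothing then forces $Y_{2d-1},Y_{2d}$ to supply points above and below $P$ near the fibre over $c'$. A two-color segment lemma cannot fix this either. In $\R^d$ the exact segment statement needs all $2d$ colors, since two points span a segment parallel to $u$ only in degenerate cases.

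Here is a concrete failure. Take $d=2$, $u=e_2$, $c_1=c_2=0$, and $Y_1=Y_2=\braces{(-10,1000),(10,1000),(0,-10^4)}$, a thin triangle containing $\Bd$. Take $c_3=c_4=(1000,1000)$ and any $Y_3=Y_4\subset c_3+10\Bd$ with $c_3+\Bd\subset\conv Y_3$. Step 1, applied to the projections $\braces{-10,0,10}$ with center $0$, forces $\braces{y_1,y_2}=\braces{(\pm10,1000)}$ and $c'=0$. The fibre $c'+\R u$ meets $\conv\braces{c_1,\dots,c_4}$ only at the origin. But the convex hull of every transversal extending $y_1,y_2$ lies in the strip $\braces{990\le x_2\le 1010}$. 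So step 4 cannot produce a center on the fibre, although the theorem holds here with a center near $c_3$.

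Step 5 has a second, independent problem. You bound the tilt loss by $\max_i\abs{c_i}$ and the diameters of the $Y_i$, but these quantities are unbounded. \Href{Proposition}{prp:quantitative_steinitz} gives no diameter control: in the example, $Y_1$ is forced, and the triangle can be made arbitrarily elongated. Your radius would therefore not be uniform.

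\textbf{The missing idea is never to split the colors.} For each direction $v\in\Sd$, the paper applies \Href{Theorem}{thm:segment} to all $2d$ classes $Y_i$ at once. This gives one of at most $(2d)^{2d}$ transversals $R$ and a point $c\in C=\conv\braces{c_1,\dots,c_{2d}}$ with $c+\brackets{-\rho v,\rho v}\subset\conv R$. By pigeonhole, a single transversal works for a set of directions of measure at least $(2d)^{-2d}$. \Href{Lemma}{lem:witness} then picks $d$ well-separated such directions and averages the corresponding $d$ segments; convexity of $C$ keeps the averaged center in $C$. All losses are combinatorial: the number of transversals and a spherical-zone estimate. They do not depend on the positions of the centers or the diameters of the sets, and this is what yields the uniform radius $(3d)^{-2d^2}$.
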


When all the centers $c_i$ coincide, the preceding theorem gives a weaker
radius than \Href{Proposition}{prp:colorful_quantitative_steinitz}; its
novelty is that the centers may be arbitrary.

The proof of this theorem selects all $2d$ points simultaneously.  Its key
ingredient is the following exact statement for translates of a segment.
It is a point-selection counterpart of Sober\'{o}n's colorful Helly theorem
for width in a fixed direction
\cite[Theorem~2.2]{soberon2016helly}.
\begin{thm}[Colorful translates of a segment]
\label{thm:segment}
Let $u \in \Sd$, let $X_1, \dots, X_{2d} \subset \R^d$, and let
$c_1, \dots, c_{2d} \in \R^d$.  Suppose that
\begin{equation}
 c_i + \brackets{-u, u} \subset \conv X_i,
 \qquad i \in \brackets{2d}.
 \label{eq:segment_hypothesis}
\end{equation}
Then there is a transversal $x_i \in X_i$, $i \in \brackets{2d}$, and
a point
\[
 c \in \conv\braces{c_1, \dots, c_{2d}}
\]
such that
\begin{equation}
 c + \brackets{-u, u}
 \subset \conv\braces{x_1, \dots, x_{2d}}.
 \label{eq:segment_conclusion}
\end{equation}
\end{thm}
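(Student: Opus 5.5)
The plan is to reduce the statement to a colorful intersection problem and then apply a topological colorful Helly theorem in a lifted space. First I would simplify the hypotheses. By Carath\'eodory, each $c_i \pm u$ lies in the convex hull of finitely many points of $X_i$, so we may assume every $X_i$ is finite. Next, let $\pi$ be the orthogonal projection onto $u^\perp \cong \R^{d-1}$ and write $h(x) = \iprod{x}{u}$ for the height. For a finite set $Y$ and a point $p \in \pi(\conv Y)$, let
\[
g_Y(p) = \min\{h(y) \st y \in \conv Y,\ \pi(y) = p\}, \qquad f_Y(p) = \max\{h(y) \st y \in \conv Y,\ \pi(y) = p\}.
\]
Then $f_Y$ is concave, $g_Y$ is convex, and $c + \brackets{-u,u} \subset \conv Y$ holds exactly when
\[
g_Y(\pi(c)) \le h(c) - 1 \quad\text{and}\quad h(c) + 1 \le f_Y(\pi(c)).
\]

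In the lifted space I would encode admissible pairs (transversal, center). The conclusion asks for $\lambda$ in the simplex $\Delta_{2d-1}$ such that, with $c = \sum_i \lambda_i c_i$, both points $c \pm u$ lie in $\conv\{x_1,\dots,x_{2d}\}$. The hypothesis gives, for each color $i$, the relations $c_i \pm u = \sum_{x \in X_i} \alpha^{\pm}_{i,x} x$. I would work in the space of pairs $(p, t) \in \R^{d-1} \times \R$, which has dimension $d$, and take the $d$ "upper" constraints together with the $d$ "lower" constraints. This is where the count $2d$ should come from: $d+d$ colors, with Helly number $d+1$ in each half after accounting for the shared coordinate. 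For each color $i$ and each point $x \in X_i$, I would define a convex set $K_{i,x}$ of parameters $(p,t)$. The aim is a dual formulation in which a transversal $x_i \in X_i$ fails to work for every $c$ if and only if the sets $K_{i,x_i}$ have empty common intersection. The hypothesis on a single color, namely $c_i \pm u \in \conv X_i$, should then say that $\bigcap_{x \in X_i} K_{i,x} \ne \emptyset$ for every $i$. Equivalently, a single-color family cannot be "bad".

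The key tool would be a topological colorful Helly theorem (Kalai--Meshulam) applied to the nerve of this family. Plain colorful Helly needs convexity, but the sets arising from the condition "$c$ lies in the convex hull of the centers and $c \pm u$ lies in the rainbow hull" involve products of the unknown weights $\lambda$ and the convex coefficients. For this reason I expect the relevant complex to be only $d$-Leray rather than the nerve of a convex family. I would prove $d$-Leray-ness by showing that every intersection of the constraint sets is either empty or contractible: the constraints are convex in the lifted coordinates $(c, c+u, c-u)$ once the combinatorial type of the transversal is fixed. The colorful Helly theorem for $d$-Leray complexes with $2d = d+1 + (d-1)$ colors must then be matched correctly, and I would take the version for $d$-collapsible or $d$-Leray complexes with the sharp number of colors, as in the cone statement in the abstract.

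The main obstacle is choosing the lifting so that three things hold at once: the dimension equals exactly $2d-1$, the single-color hypothesis becomes "the color class is a face of the complex", and rainbow faces correspond to good transversals with a center in $\conv\{c_1,\dots,c_{2d}\}$. I would first try lifting each $x \in X_i$ to $(x, e_i) \in \R^d \times \R^{2d}$ and each center to $(c_i, e_i)$. This turns the condition "$c \in \conv\{c_i\}$ with matching weights" into a linear condition. I would then verify the Leray property of the resulting family by a direct contractibility argument.
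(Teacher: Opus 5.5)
Your proposal has a genuine gap: it never defines the sets $K_{i,x}$, and the two steps that would carry the weight are set up in ways that cannot work.

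\textbf{The dualization points the wrong way.} Being good means $c+\brackets{-u,u}\subset\conv Y$ for some $c\in C:=\conv\braces{c_1,\dots,c_{2d}}$. This property passes from $Y$ to its supersets, whereas $\bigcap_{y\in Y}K_y\ne\emptyset$ passes to subsets. So an encoding valid for all finite $Y$ must send good sets to \emph{empty} intersections. That is exactly the paper's finite duality: some $c\in C$ satisfies $c+\brackets{-u,u}\subset\conv I$ if and only if $\bigcap_{x\in I}F_x^C=\emptyset$. With your convention you would need ``every color class is a face $\Rightarrow$ some rainbow set is a face'', and Kalai--Meshulam does not give this. It gives ``no color class is a face $\Rightarrow$ some rainbow set is not a face'', which is useful only when good means empty.

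\textbf{The count is wrong.} With $2d$ colors you need a $(2d-1)$-Leray nerve, not a $d$-Leray one. A $d$-Leray nerve for a correct encoding cannot exist, so in particular convex sets in your $d$-dimensional $(p,t)$-space cannot work. Such a nerve would give the statement with $d+1$ colors, which already fails for $d=2$, $X_i=\braces{\pm 1}^2$, $c_i=0$, $u=e_1$: no three vertices of the square contain $\brackets{-e_1,e_1}$. Moreover, ``all non-empty intersections are contractible'' only yields $n$-Leray in an $n$-dimensional space. Your route would therefore need a convex encoding in dimension $2d-1$. You do not produce one, and the lift to $\R^d\times\R^{2d}$ only raises the dimension.

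\textbf{The missing idea.} The paper saves one color \emph{without} lowering the dimension. It works in the $2d$-dimensional space $E_u=\braces{(p,q,t):\iprod{q}{u}=1}$ with the open convex sets
\[
 F_x^C=\braces{(p,q,t)\in E_u:\ h_C(-p)+\iprod{p}{x}+\abs{\iprod{q}{x}-t}<1}.
\]
Here the support function $h_C$ absorbs the moving center, so the bilinear terms in the weights that worried you never appear. Convexity alone would make this nerve only $2d$-Leray, i.e.\ give $2d+1$ colors.

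The extra input is the escape lemma: for every finite $I$, each connected component of $E_u\setminus\bigcup_{x\in I}F_x^C$ is unbounded. The paper proves this for a fixed center via sign polyhedra and a Farkas alternative. It then transfers it to $C$ by taking, at each point $(p,q,t)$, a center in $C$ minimizing $\iprod{p}{c}$.

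Consequently, the complement of the union in the one-point compactification is connected. Alexander duality then kills $\widetilde H_{2d-1}$ of the union, so the nerve is $(2d-1)$-Leray, and the contrapositive of Kalai--Meshulam applies with $2d$ colors. Nothing in your plan plays the role of this escape-plus-Alexander-duality step, and without it the number $2d$ is out of reach.
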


The same idea also gives a colorful Helly theorem for cones. The
question of forcing a cone in the intersection of a family of convex
sets goes back to Katchalski \cite{katchalski1978helly}, who proved the
case of rays. B\'ar\'any \cite[Theorem~1.2]{barany2024positive}
established the sharp theorem for $k$-dimensional cones. In
\cite{Ivanov2026Colorful_pos_bas}, we obtained its colorful homogeneous
version, with the optimal number $2d-k+1$ of colors. The nonhomogeneous
statement proved there only ensures that every member of one color
class contains a translate of the same cone; the translates need not
have a common apex. The following theorem settles the question left
open at the end of that paper.

We write $\poscone{A}$ for the positive hull of a set $A \subset \R^d$
and $\Lin A$ for its linear span.
A \emph{convex cone} is a set of the form $a + \poscone{A}$; the point
$a$ is an \emph{apex} of the cone, and need not be the origin.
A \emph{rainbow sub-selection} from several families consists of at
most one set from each family; a \emph{rainbow selection} contains
exactly one set from each family. Put
\[
 m(k,d) := \max\braces{d+1, 2(d-k+1)}.
\]

\begin{thm}[Colorful Helly theorem for cones]
\label{thm:colorful_helly_cones}
Fix $k \in \brackets{d-1}$ and put $N = 2d-k+1$. Let
$\mathcal F_1, \dots, \mathcal F_N$ be finite families of convex sets
in $\R^d$. If the intersection of every rainbow sub-selection of at
most $m(k,d)$ sets contains a $k$-dimensional cone, then, for some
$i \in \brackets{N}$, the intersection
\[
 \bigcap_{F \in \mathcal F_i} F
\]
contains a $k$-dimensional cone.
\end{thm}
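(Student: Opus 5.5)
The plan is to separate the apex from the direction part of a cone by a lifting, and then to apply a topological colorful Helly theorem (Kalai--Meshulam) to a suitable complex. First I reduce to closed sets: each family is finite, and for a convex set $F$ a $k$-dimensional cone in $\bigcap F$ can be moved into the relative interior. Hence I may replace each $F$ by its closure, at the price of a standard perturbation argument, which I will check at the end. For a closed convex $F$ and a cone $K=\poscone{A}$ with apex $0$, we have $a+K\subset F$ if and only if $a\in F$ and $K\subset \reccone{F}$. So a rainbow sub-selection $\mathcal G$ contains a $k$-dimensional cone exactly when $\bigcap\mathcal G\neq\emptyset$ and $\bigcap_{G\in\mathcal G}\reccone{G}$ contains a $k$-dimensional cone. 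The intersection here is taken over the recession cones of the chosen sets, not of $\bigcap\mathcal G$; for closed convex sets with nonempty common intersection the two coincide. This identity is exactly the fact that was missing in the nonhomogeneous statement of \cite{Ivanov2026Colorful_pos_bas}.

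Next I lift to $\R^{d+1}$. Put $\hat F=\overline{\poscone{(\{1\}\times F)}}$, a closed convex cone whose trace on $\{0\}\times\R^d$ is $\{0\}\times\reccone F$. Then $F\supset a+K$ corresponds to $\hat F\supset \poscone{(\{(1,a)\}\cup \{0\}\times K)}$, a $(k+1)$-dimensional homogeneous cone with one generator in the open half-space $x_0>0$. A direct application of the homogeneous theorem of \cite{Ivanov2026Colorful_pos_bas} in dimension $d+1$ with $k+1$ would need $2d-k+2$ colors, one too many. I therefore will not use it as a black box. Instead I encode the condition in a simplicial complex $\Sigma$ on the vertex set $\bigsqcup_i\mathcal F_i$: a subfamily is a face if its intersection contains a $k$-dimensional cone. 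The goal is to show that $\Sigma$ is $(m-1)$-Leray for the appropriate $m$. I would then invoke the Kalai--Meshulam topological colorful Helly theorem in the form: if $\Sigma$ is $(N-1)$-Leray, or more precisely if its Leray number together with the two separate Helly numbers $d+1$ for apexes and $2(d-k+1)$ for recession cones fit the colorful counting, then some whole color class is a face. The two terms of $m(k,d)$ come from these two Helly numbers. One is ordinary Helly in $\R^d$ for the apex, giving $d+1$. The other is B\'ar\'any's sharp Helly number $2(d-k+1)$ for $k$-dimensional cones in the recession cones, which lie in the linear space spanned by their directions.

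To control topology, I would represent the faces via nerves of convex sets in a parameter space. For each $F$ let $P(F)=\{(a,v_1,\dots,v_k): a\in F,\ v_j\in\reccone F\}$. This set is convex, and a sub-selection is a face precisely when $\bigcap P(F)$ contains a point with $v_1,\dots,v_k$ linearly independent. The complement of the independence locus is a union of linear subspaces. So $\Sigma$ is the nerve of convex sets intersected with the complement of a subspace arrangement. I would compute its homology via the Mayer--Vietoris / nerve lemma, using that intersections of the $P(F)$ are products of a convex set in the apex coordinate and a cone in the direction coordinates. Hence the vanishing homology splits into an apex part, which is contractible or empty, and a direction part, which is homologically controlled in degrees at least $2(d-k+1)-1$ by the homogeneous result. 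A Künneth/join argument should then yield the required Leray bound, matching $N=2d-k+1$.

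The main obstacle is this last step. It requires proving that the direction part, namely the complex of subfamilies whose recession cones share a $k$-dimensional cone, has Leray number $2(d-k+1)-1$. It must also be checked that joining with the apex complex, which is $d$-Leray, costs only one extra dimension rather than $d$, so that the total number of colors is $2d-k+1$. My first attempt would be to prove this via a lifting that places the apex coordinate as one extra ray direction, as above. I would then check that the half-space condition $x_0>0$ is automatically satisfiable, so that only a $k$-dimensional cone, and not a $(k+1)$-dimensional one, needs to be forced transversally.
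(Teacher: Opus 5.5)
You have a plan rather than a proof. Everything rests on a Leray bound for $\Sigma$ that you leave as ``the main obstacle'', and the route you sketch toward it cannot work.

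\begin{itemize}
\item \textbf{The form of Kalai--Meshulam you invoke does not exist.} With $N$ colors the theorem needs $\Sigma$ to be $(N-1)$-Leray. There is no version that combines a Leray number with Helly numbers such as $d+1$ and $2(d-k+1)$; Helly numbers carry no homological information.
\item \textbf{The direction-part bound is false for $k\ge2$.} Vanishing in degrees $\ge 2(d-k+1)-1$ would, via Kalai--Meshulam, give a homogeneous colorful theorem with $2(d-k+1)<N$ colors. That contradicts the sharpness of $N$ colors in the homogeneous case, noted after the statement. For instance, take $d=3$, $k=2$. Let $\mathcal F_1=\mathcal F_2=\mathcal F_3$ consist of the half-spaces $x_1\ge0$, $x_1\le0$, $x_2\ge0$, $x_2\le0$, and let $\mathcal F_4$ consist of $x_3\ge0$, $x_2\ge x_3$, $x_2\le-x_3$. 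Each monochromatic intersection is a line. In any rainbow selection, the three normals taken from $\{\pm e_1,\pm e_2\}$ cannot include all four of these vectors, and the fourth normal has non-zero third coordinate. Hence the positive hull of the four normals has lineality space of dimension at most one, so the rainbow intersection is at least two-dimensional.
\item \textbf{The count does not reach $N$.} Even granting your bound, one extra degree for the apex gives $2d-2k+3$ colors, which equals $N$ only for $k=2$.
\item \textbf{The parameter-space topology fails.} The locus of linearly dependent $k$-tuples is a determinantal cone, not a finite subspace arrangement. Removing it from the convex sets $P(F)$ destroys contractibility: for $k\ge2$, the independent $k$-tuples in a $k$-dimensional pointed cone form a set disconnected by the sign of the determinant. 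So neither the nerve theorem nor a K\"unneth splitting applies.
\item \textbf{Passing to closures is not harmless.} The sets $\{x_1>0\}$ and $\{x_1<0\}$ are disjoint, while their closures meet in a hyperplane.
\end{itemize}

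The missing idea is that, once you argue by contradiction, no topology is needed. After B\'ar\'any's theorem upgrades the hypothesis to full rainbow selections, the paper assumes that no $P_i=\bigcap_{F\in\mathcal F_i}F$ contains a $k$-dimensional cone. Since $p+\reccone{\overline{P_i}}\subset\ri P_i$ for $p\in\ri P_i$, this forces $L_i=\Lin\reccone{\overline{P_i}}$ to have dimension at most $k-1$, and it handles non-closed sets with no closure reduction. This is what makes the count come out to $N$. Choose a linear map $\pi:\R^d\to\R^{k-1}$ injective on every $L_i$, and a functional $\ell$ that vanishes identically on no $\Lin C_R\cap\ker\pi$. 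Then each rainbow cone $a_R+C_R$ contains, for every $t\ge1$, the segment with endpoints $a_R+tu_R$ and $a_R+tw_R$, where $\pi u_R=\pi w_R$ and $\ell(u_R-w_R)=1$. Encoding segments by their endpoints, the sets $G_F(t)=\{(x,y): x,y\in F,\ \pi x=\pi y,\ \ell(x-y)=t,\ \enorm{x}+\enorm{y}\le Mt\}$ are convex in an affine space of dimension $2d-k=N-1$. So the classical colorful Helly theorem applies with exactly $N$ colors and yields $(x_t,y_t)\in P_i\times P_i$ for a fixed $i$ along a sequence $t\to\infty$. The limits $v,w$ of $x_t/t$ and $y_t/t$ are recession vectors of $\overline{P_i}$ with $\pi v=\pi w$ and $\ell(v-w)=1$, contradicting the injectivity of $\pi$ on $L_i$.
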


Both parameters are optimal. The homogeneous examples in
\cite{Ivanov2026Colorful_pos_bas} already require $N$ colors, even
under the full-rainbow hypothesis. To see that $m(k,d)$ cannot be
reduced, take $N$ copies of a sharp monochromatic family for the
cone theorem: a rainbow sub-selection of at most $m(k,d)-1$ sets
uses at most that many distinct constraints. Our proof also gives
an alternative derivation of the homogeneous colorful theorem.

This result is a simple illustration of our general lifting idea.
We encode a segment by its two endpoints in $\R^d \times \R^d$, so
that containment in a convex set becomes a convex constraint in the
lifted space. For the Steinitz theorem, the lift also records a common
translation of the endpoints, with the center constrained to the
convex hull of the given centers. For cones, we prescribe linear
conditions on the difference of the endpoints and let their separation
tend to infinity. This reduces the cone problem to the classical
Colorful Helly theorem in dimension $2d-k$.

In \Href{Section}{sec:motivation}, we deduce the quantitative Steinitz
theorem from the segment theorem. Sections~\ref{sec:idea}--\ref{sec:segment_proof}
are devoted to the latter. The proof of the cone theorem, which is
independent of the topological part of the argument, is given in
\Href{Section}{sec:cones}.

\subsection*{Use of AI tools}
The author used ChatGPT during the preparation of this paper. The overall proof strategy originated with the author, while the colorful-hole lemma (Lemma 4.2), a key step in the argument, was first suggested by ChatGPT Sol. Theorem 1.3 applies the same underlying idea as the proof of Theorem 1.1; the simplified argument presented here was suggested by ChatGPT Astra.   

\section{From segments to a quantitative Steinitz theorem}
\label{sec:motivation}

In this section, we derive \Href{Theorem}{thm:steinitz} from
\Href{Theorem}{thm:segment}.  We regard the former as a point-selection
counterpart of the colorful quantitative Helly theorem for diameter with
$2d$ colors \cite[Theorem~1]{Ivanov2026Hellynumbers}.

One of the main ingredients in the proof of that quantitative Helly theorem is the fixed-direction
 colorful Helly theorem of Sober\'{o}n
\cite[Theorem~2.2]{soberon2016helly}.  Recall that, for a convex set
$K \subset \R^d$, its width in the direction $u$ is
\[
 \widthu(K)
 := \sup\braces{\iprod{u}{x - y} : x, y \in K}.
\]

\begin{prp}
\label{prp:soberon_width}
Let $\mathcal F_1, \dots, \mathcal F_{2d}$ be non-empty finite
families of convex sets in $\R^d$.  Suppose that
\[
 \widthu \! \parenth{F_1 \cap \cdots \cap F_{2d}} \ge 1
\]
for every choice $F_i \in \mathcal F_i$,
$i \in \brackets{2d}$.  Then there is $j \in \brackets{2d}$ such
that
\[
 \widthu \! \parenth{\bigcap_{F \in \mathcal F_j}F} \ge 1.
\]
\end{prp}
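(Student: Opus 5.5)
The plan is to lift the problem to pairs of points and apply the classical colorful Helly theorem in an affine space of dimension $2d-1$; for that theorem, $2d$ colors is exactly the right number. For a convex set $F\subset\R^d$ and a parameter $t>0$, define
\[
 G_t(F) := \braces{(x,y)\in\R^d\times\R^d \st x\in F,\ y\in F,\ \iprod{u}{x-y}=t}.
\]
This is a convex subset of the affine hyperplane $H_t=\braces{(x,y)\st \iprod{u}{x-y}=t}\subset\R^{2d}$, and $\dim H_t = 2d-1$. Taking $G_t$ commutes with intersections: $G_t(F_1\cap\dots\cap F_m)=G_t(F_1)\cap\dots\cap G_t(F_m)$. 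For a convex set $K$, $G_t(K)\neq\emptyset$ holds exactly when there are $x,y\in K$ with $\iprod{u}{x-y}=t$.

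First I would relate this nonemptiness to width. If $\widthu(K)>t$, pick $x,y\in K$ with $\iprod{u}{x-y}>t$. Then $\iprod{u}{z-y}$ is continuous and goes from $0$ to a value above $t$ as $z$ moves along the segment $[y,x]\subset K$, so some $z$ on it gives $\iprod{u}{z-y}=t$, and $G_t(K)\ne\emptyset$. Conversely, $G_t(K)\neq\emptyset$ trivially gives $\widthu(K)\ge t$. Width exactly $1$ need not be attained, so I will work with $t=1-\varepsilon$ for $\varepsilon\in(0,1)$.

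Fix such an $\varepsilon$ and set $t=1-\varepsilon$. By hypothesis, every rainbow intersection $F_1\cap\dots\cap F_{2d}$ has width at least $1>t$. Hence $G_t(F_1)\cap\dots\cap G_t(F_{2d})\neq\emptyset$ for every rainbow choice. The families $\braces{G_t(F)\st F\in\mathcal F_i}$, $i\in\brackets{2d}$, are $2d=(2d-1)+1$ finite families of convex sets in the $(2d-1)$-dimensional space $H_t$. By the classical colorful Helly theorem (Lovász), some index $j(\varepsilon)$ satisfies $\bigcap_{F\in\mathcal F_{j(\varepsilon)}}G_t(F)\ne\emptyset$. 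Therefore $\widthu\parenth{\bigcap_{F\in\mathcal F_{j(\varepsilon)}}F}\ge 1-\varepsilon$.

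Finally, there are only $2d$ possible indices. So along a sequence $\varepsilon_n\to 0$ a single index $j$ occurs infinitely often, and the width of $\bigcap_{F\in\mathcal F_j}F$ is at least $1-\varepsilon_n$ for infinitely many $n$, hence at least $1$.

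The main point to watch is the dimension count. Lifting naively to $\R^{2d}$ with the inequality $\iprod{u}{x-y}\ge t$ would require $2d+1$ colors. Fixing the $u$-component of $x-y$ to an exact value cuts the ambient dimension to $2d-1$. This is only legitimate because of the intermediate-value step, which is why I use the strict inequality $\widthu>t$ rather than aiming directly at $t=1$. The limiting argument then handles the possible non-attainment of the supremum.
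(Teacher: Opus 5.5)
Your proof is correct and complete. The intermediate-value step is really just scaling along $[y,x]$. The classical colorful Helly theorem is applied with $2d$ colors in the $(2d-1)$-dimensional hyperplane $H_t$. The choice $t=1-\varepsilon$, followed by pigeonholing the index as $\varepsilon\to 0$, correctly handles widths that are not attained, for example by open sets.

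The paper gives no proof of this proposition; it is quoted from Sober\'on's work. So there is no in-paper argument to match it against. Your lift is, however, exactly the $k=1$ case of the construction the paper uses for \Href{Theorem}{thm:colorful_helly_cones} in Section~\ref{sec:cones}. There $\pi$ is the zero map, so $E_t=\braces{(x,y)\st \ell(x-y)=t}$, which has dimension $2d-k=2d-1$. Colorful Helly is then applied with $N=2d$ colors, just as you do with $\ell=\iprod{u}{\cdot}$.

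The differences are only in the limiting step. The paper adds the bound $\enorm{x}+\enorm{y}\le Mt$ and sends $t\to\infty$, because it needs compactness of the rescaled pairs to extract recession directions. You need no norm bound and let $t\uparrow 1$ instead.
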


It is natural to view \Href{Theorem}{thm:segment} as the point-selection
dual of this proposition.

Our deduction of \Href{Theorem}{thm:steinitz} from
\Href{Theorem}{thm:segment} uses \Href{Proposition}{prp:quantitative_steinitz}.

The second ingredient converts sufficiently many witness directions
into a Euclidean ball.

\begin{lem}
\label{lem:witness}
Let $Q \subset \R^d$ be compact and convex, let $W \subset \Sd$ have
normalized spherical measure at least $\alpha > 0$, and let $\rho > 0$.
Suppose that, for every $v \in W$, there is $c_v \in \R^d$ such that
\[
 c_v + \brackets{-\rho v, \rho v} \subset Q.
\]
Then $Q$ contains a translate of
\[
 \frac{\rho}{d}\parenth{\frac{\alpha}{2d}}^{d-1}\Bd.
\]
If all the points $c_v$ belong to a convex set $C$, the center of this
ball can be chosen in $C$.
\end{lem}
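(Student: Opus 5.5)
The plan is to average finitely many witness segments. Convexity of $Q$ turns the average into a centrally symmetric parallelotope, and a ball will fit inside it if the chosen directions are well spread.

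\emph{Reduction to a parallelotope.} Suppose $v_1,\dots,v_d \in W$ are chosen. Since $Q$ is convex, $Q \supset \frac1d\sum_{i=1}^d \bigl(c_{v_i} + [-\rho v_i,\rho v_i]\bigr)$. Put $c := \frac1d\sum_i c_{v_i}$. Then
\[
 c + \frac{\rho}{d}P \subset Q,
 \qquad
 P := \Bigl\{\sum_{i=1}^d t_i v_i \st \abs{t_i}\le 1\Bigr\}.
\]
If all $c_{v_i}$ lie in a convex set $C$, then $c \in C$ as a convex combination. This settles the last sentence of the lemma. It remains to choose the $v_i$ so that $P$ contains $r\Bd$ with $r \ge (\alpha/(2d))^{d-1}$.

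\emph{Greedy choice.} Take $v_1 \in W$ arbitrary. Having $v_1,\dots,v_{k-1}$, put $L_{k-1} := \Lin\{v_1,\dots,v_{k-1}\}$ and choose $v_k \in W$ with $h_k := \distan(v_k, L_{k-1}) \ge \alpha/(2d)$. Such a $v_k$ exists provided the set $\{v\in\Sd \st \distan(v,L)<t\}$, with $t=\alpha/(2d)$ and $L$ a subspace of dimension at most $d-1$, has normalized measure less than $\alpha$. Enlarging $L$ to a hyperplane $n^\perp$, this set lies in the band $\{v \st \abs{\iprod{v}{n}}<t\}$. The normalized measure of the band is at most
\[
 2t\,\frac{\abs{\Sph^{d-2}}}{\abs{\Sph^{d-1}}} \le t\sqrt{2d/\pi}\cdot(1+o(1)),
\]
and in any case it is at most $d\,t = \alpha/2 < \alpha$. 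A crude bound suffices, since the constant $2d$ leaves room. Thus $W$ is not contained in this neighbourhood, and the greedy step succeeds. We have $h_1 = 1$ and $h_k \ge \alpha/(2d)$ for $k\ge 2$.

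\emph{Inradius of $P$.} $P$ is centrally symmetric, so its inradius is the minimum over $i$ of the distance from $0$ to the facet hyperplane $\{\sum_j t_jv_j \st t_i = 1\}$. That distance equals $\mathrm{vol}_d(P)/(2\,\mathrm{vol}_{d-1}(F_i))$, where $F_i$ is the facet, a $(d-1)$-parallelotope spanned by unit vectors. By Gram–Schmidt, $\mathrm{vol}_d(P) = 2^d\prod_k h_k$. By Hadamard's inequality, $\mathrm{vol}_{d-1}(F_i)\le 2^{d-1}$. Hence the inradius is at least $\prod_k h_k \ge (\alpha/(2d))^{d-1}$. Therefore $c + \frac{\rho}{d}(\alpha/(2d))^{d-1}\Bd \subset Q$.

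\emph{Main obstacle.} The only delicate point is the spherical band estimate, which needs measure at most $d\,t$ with an honest dimensional constant. I would verify it from the formula $\abs{\Sph^{d-2}}/\abs{\Sph^{d-1}} = \Gamma(d/2)/(\sqrt{\pi}\,\Gamma((d-1)/2)) \le \sqrt{d}/2$, which gives band measure $\le t\sqrt d \le d\,t$. Compactness of $Q$ is not needed beyond ensuring that the segments lie in it.
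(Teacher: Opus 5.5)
Your proof is correct. It has the same skeleton as the paper's: choose $d$ well-separated directions in $W$ greedily, then average the witness segments to get $c+\frac{\rho}{d}U\brackets{-1,1}^d\subset Q$ with $c\in C$. The difference is where the dimensional factors are lost.

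The paper uses the sharper zone estimate $\sigma\braces{\abs{\iprod{v}{e}}\le\omega}\le\sqrt{2d}\,\omega$, so it can take the threshold $\delta=\alpha/(2\sqrt d)$. It then passes through $U\brackets{-1,1}^d\supset U\Bd\supset\smin(U)\Bd$ and the lossy bound $\smin(U)\ge\abs{\det U}/\norm{U}_{\mathrm{op}}^{d-1}$ with $\norm{U}_{\mathrm{op}}\le\sqrt d$. That bound gives back exactly the factor $d^{-(d-1)/2}$ gained in the zone step.

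You use the cruder threshold $t=\alpha/(2d)$, so you only need band measure at most $dt$. In exchange, you bound the inradius of the parallelotope $U\brackets{-1,1}^d$ directly, rather than of the smaller set $U\Bd$. The distance to the $i$-th facet is $\abs{\det U}$ divided by the $(d-1)$-volume spanned by the other unit columns, which Hadamard bounds by $1$, so this step loses nothing.

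Both routes end at exactly $\parenth{\alpha/(2d)}^{d-1}$. Combining the paper's zone estimate with your facet bound would improve the lemma to $\frac{\rho}{d}\parenth{\alpha/(2\sqrt d)}^{d-1}$.

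One small inaccuracy: for $d=2$ the density $(1-s^2)^{(d-3)/2}$ of $\iprod{v}{n}$ is not bounded by $1$. There the band measure is $\frac{2}{\pi}\arcsin t>\frac{2t}{\pi}=2t\abs{\Sph^{0}}/\abs{\Sph^{1}}$, so your first displayed inequality fails. The bound you actually use, at most $dt$, still holds, because $\frac{2}{\pi}\arcsin t\le t$ for $t\in[0,1]$ by convexity of $\arcsin$.
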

\begin{proof}
Let $\sigma$ denote the standard Haar probability measure on $\Sd$.
We use the spherical-zone estimate
\[
 \sigma\braces{v \in \Sd : \abs{\iprod{v}{e}} \le \omega}
 \le \sqrt{2d}\,\omega,
 \qquad e \in \Sd,\quad 0 \le \omega \le 1,
\]
 see
\cite{boroczky2003covering}.
  Set
\[
 \delta := \frac{\alpha}{2 \sqrt d}.
\]
Choose $u_1 \in W$.  We can then choose $u_2, \dots, u_d \in W$
successively so that
\[
 \distan\parenth{u_j, \Lin\braces{u_1, \dots, u_{j-1}}}
 \ge \delta,
 \qquad j = 2, \dots, d.
\]
Indeed, at every step the linear span of the previously chosen vectors
is contained in a hyperplane.  Its $\delta$--neighborhood in $\Sd$ is
therefore contained in a zone of half-width $\delta$, whose measure is
at most
\[
 \sqrt{2d}\,\delta  < \alpha
 \le \sigma \! \parenth{W}.
\]

Let $U$ be the matrix with columns $u_1, \dots, u_d .$
On the one hand,
\[
 \abs{\det U}
 = \prod_{j=2}^{d}
 \distan\parenth{u_j, \Lin\braces{u_1, \dots, u_{j-1}}}
 \ge \delta^{d-1}.
\]
On the other hand, $\norm{U}_{\mathrm{op}} \le \sqrt d$.  Hence,
\[
 \smin(U)
 \ge \frac{\abs{\det U}}{\norm{U}_{\mathrm{op}}^{d-1}}
 \ge \parenth{\frac{\alpha}{2 d}}^{d-1}.
\]
Choose witnesses $c_j := c_{u_j}$ and put
\[
 c := \frac{1}{d}\sum_{j \in \brackets{d}}c_j.
\]
By convexity,
\[
 c + \frac{\rho}{d}U\brackets{-1, 1}^d
 = \frac{1}{d}\sum_{j \in \brackets{d}}
 \parenth{c_j + \brackets{-\rho u_j, \rho u_j}}
 \subset Q.
\]
Since $U\brackets{-1, 1}^d$ contains $\smin(U)\Bd$, the conclusion
follows.  If every $c_v$ belongs to $C$,
then $c \in C$.
\end{proof}

\begin{proof}[Proof of \Href{Theorem}{thm:steinitz}]
Put
\[
 C := \conv\braces{c_1, \dots, c_{2d}}
 \qquad\text{and}\qquad
 \rho := \frac{1}{6d^2}.
\]
Apply \Href{Proposition}{prp:quantitative_steinitz} to each color.  We
obtain $Y_i \subset X_i$, $i \in \brackets{2d}$, such that
\[
 \card{Y_i} \le 2d
 \qquad\text{and}\qquad
 c_i + \rho\Bd \subset \conv Y_i.
\]
Let
\[
 \mathcal R := Y_1 \times \cdots \times Y_{2d}.
\]
Thus, the elements $R = \parenth{x_1, \dots, x_{2d}}$ of $\mathcal R$
are labeled transversals, and
\[
 \card{\mathcal R} \le M := \parenth{2d}^{2d}.
\]
For $R \in \mathcal R$, put
\[
 P_R := \conv\braces{x_1, \dots, x_{2d}}
\]
and define its set of witness directions by
\[
 W_R := \braces{v \in \Sd : \text{there is } c \in C \text{ such that }
 c + \brackets{-\rho v, \rho v} \subset P_R}.
\]
Each $W_R$ is compact: it is the projection onto $\Sd$ of the compact
set
\[
 \braces{\parenth{c, v} \in C \times \Sd :
 c - \rho v \in P_R,\ c + \rho v \in P_R}.
\]
For every $v \in \Sd$, apply \Href{Theorem}{thm:segment} to the sets
$\rho^{-1}Y_i$ and the centers $\rho^{-1}c_i$, and then rescale.  It
yields $R \in \mathcal R$ and $c \in C$ such that
\[
 c + \brackets{-\rho v, \rho v} \subset P_R.
\]
Consequently,
\[
 \Sd = \bigcup_{R \in \mathcal R}W_R.
\]
By the pigeonhole principle, some $R \in \mathcal R$ satisfies
\[
 \sigma(W_R)
 \ge \frac{1}{\card{\mathcal R}}
 \ge \frac{1}{M}.
\]

Apply \Href{Lemma}{lem:witness} to $Q = P_R$, $W = W_R$, and
$\alpha = M^{-1}$.  We obtain a point $c \in C$ such that
\[
 c + r_d\Bd \subset P_R,
 \qquad
 r_d := \frac{1}{6d^3}
 \parenth{\frac{1}{2d\parenth{2d}^{2d}}}^{d-1}.
\]
The elementary inequalities
\[
 2d\parenth{2d}^{2d} \le \parenth{3d}^{2d+1}
 \qquad\text{and}\qquad
 6d^3 \le \parenth{3d}^{d+1}
\]
give
\[
 r_d \ge \parenth{3d}^{-2d^2}.
\]
Since $R$ is a transversal of the sets $Y_i \subset X_i$, this proves
the theorem.
\end{proof}

\section{Idea of the proof of the segment theorem}
\label{sec:idea}

In view of \Href{Proposition}{prp:soberon_width} and related colorful
quantitative results, $2d$ is the natural number of colors.  Achieving
this number is also the main difficulty in \Href{Theorem}{thm:segment}.
We first observe
that the non-colorful statement and a version with $2d + 1$ colors
follow readily from the following  form of the
Carath\'{e}odory theorem; see
\cite[Theorem~10.3]{barany2021combinatorial}.

\begin{prp}[Very colorful Carath\'{e}odory theorem]
\label{prp:colorful_caratheodory}
Let $A_1, \dots, A_n \subset \R^n$, let
$p \in \conv A_i$ for every $i \in \brackets{n}$, and let
$q \in \R^n$.  Then there are $a_i \in A_i$,
$i \in \brackets{n}$, such that
\[
 p \in \conv\braces{q, a_1, \dots, a_n}.
\]
\end{prp}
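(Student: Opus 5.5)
The plan is to reduce the statement to a conic version and then run Bárány's minimal-distance argument for the colorful Carathéodory theorem, adapted to cones. The point of the reduction is that the extra point $q$ is not a color class: it need not satisfy $p \in \conv\braces{q}$. A direct distance argument on $\conv\braces{q,a_1,\dots,a_n}$ therefore stalls when the nearest point lies in the facet $\conv\braces{a_1,\dots,a_n}$. Moving the apex to $p$ removes $q$ from the vertex set altogether.

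\emph{Reduction.} If $p=q$ there is nothing to prove, so assume $p \ne q$ and put $v := p-q$. For any $a_1,\dots,a_n$ we have $p \in \conv\braces{q,a_1,\dots,a_n}$ if and only if $0 \in \conv\braces{q-p,a_1-p,\dots,a_n-p}$. This holds if and only if $v \in \poscone{\braces{a_1-p,\dots,a_n-p}}$: from $0=\mu(q-p)+\sum_i\lambda_i(a_i-p)$ with $\mu>0$ we get $v = \sum_i(\lambda_i/\mu)(a_i-p)$, and conversely normalize. So, with $B_i := A_i - p$, it suffices to find a transversal $b_i \in B_i$ with $v \in \poscone{\braces{b_1,\dots,b_n}}$. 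The only hypothesis available is $0 \in \conv B_i$ for all $i$. Since $v$ is arbitrary, the target is a conic colorful Carathéodory statement with $n$ colors in $\R^n$. By a compactness argument we may assume each $A_i$ is finite.

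\emph{Distance argument.} Among all transversals, choose one minimizing $\distan\parenth{v, K}$, where $K:=\poscone{\braces{b_1,\dots,b_n}}$. Suppose this minimum is positive, let $y$ be the nearest point of $K$ to $v$, and put $w := v-y \ne 0$. Then $\iprod{w}{b_i} \le 0$ for all $i$, and $\iprod{w}{y}=0$. Hence $y$ lies in the cone over the generators in the hyperplane $H := w^\perp$, which has dimension $n-1$. By the conic Carathéodory theorem in $H$, $y$ is a nonnegative combination of at most $n-1$ of the $b_j$, so some index $i$ is not needed. Now $0 \in \conv B_i$, and $\iprod{w}{\cdot}$ averages to $0$ on a convex combination representing $0$. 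So either some $b_i' \in B_i$ satisfies $\iprod{w}{b_i'}>0$, or all points of $B_i$ in that combination lie in $H$. In the first case, replacing $b_i$ by $b_i'$ keeps $y$ in the new cone and adds the ray through $b_i'$. Moving from $y$ slightly toward $b_i'$ strictly decreases the distance to $v$, contradicting minimality. Hence the minimum is $0$ and $v \in K$.

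\emph{Main obstacle.} The difficulty is the degenerate case where the relevant points of $B_i$ lie in $w^\perp$ and there is no strict improvement. First I will try a perturbation: replace each finite $A_i$ by a generic small perturbation keeping $p$ in the interior of the relevant convex hulls. I would do this by shrinking each $A_i$ toward a simplex around $p$ that contains $p$ in its relative interior, after restricting to $\Lin$ of the data. In the generic case, $0 \in \ri \conv B_i$ and $B_i \not\subset H$ for every hyperplane $H$ through the origin. Then pass to the limit: there are finitely many transversals, so one transversal works for a sequence of perturbations, and the condition $p \in \conv\braces{q,a_1,\dots,a_n}$ is closed. If genericity is awkward to arrange, the fallback is induction on dimension. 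If $B_i \subset H$ in the degenerate step, restrict the whole problem to the sets lying in $H$ and use that $y$ is already represented there.
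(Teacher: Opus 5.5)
The paper does not prove this proposition; it quotes it from B\'ar\'any's book. So the question is whether your argument stands on its own. Your exchange argument for cones is correct whenever $p \in \operatorname{int}\conv A_i$ for every $i$. The degenerate case, however, is a genuine gap, not a technicality: the conic statement you reduce to is false without that hypothesis.

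\emph{Counterexample.} Take $n=2$, $A_1=A_2=\braces{e_1,-e_1}$, $p=0$, $q=-e_2$, so $v=e_2$. The cone of every transversal is a ray or the line $\R e_1$, and none contains $e_2$. The proposition still holds here, but only through the case $\mu=0$ of your reduction, namely $0\in\conv\braces{e_1,-e_1}$.

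\emph{Consequences.} Your ``if and only if'' is wrong: only the implication from $v\in\poscone{\braces{b_1,\dots,b_n}}$ to the conclusion holds, and only that one is used. The fallback by induction on dimension cannot succeed, since it would prove a false statement. Concretely, in the stalled step $\iprod{w}{v}=\enorm{w}^2>0$, so $v\notin H$, and nothing generated inside $H$ can represent $v$. The target you state for the generic case is the right one, since it amounts to $p\in\operatorname{int}\conv A_i$. But the perturbation must produce interior points in all of $\R^n$. Passing to relative interiors after restricting to the span of the $B_i$ does not help, because in the example that span misses $v$. Nor can you merely move points, because a set with at most $n$ points has empty interior in $\R^n$.

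\emph{A perturbation that works.} Fix a simplex $\Delta\subset\R^n$ with vertex set $S$ and $p\in\operatorname{int}\Delta$, and put $A_i^\varepsilon:=\braces{(1-\varepsilon)a+\varepsilon s \st a\in A_i,\ s\in S}$. Then $\conv A_i^\varepsilon=(1-\varepsilon)\conv A_i+\varepsilon\Delta\supset(1-\varepsilon)p+\varepsilon\Delta$, so $p\in\operatorname{int}\conv A_i^\varepsilon$. Hence, for every $w\ne 0$, some point of $A_i^\varepsilon-p$ has positive inner product with $w$, and your exchange step always applies. The resulting transversal is indexed by pairs $(a_i,s_i)$, so one index pattern recurs along a sequence $\varepsilon_k\to 0$. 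Passing to the limit in the closed affine condition $p\in\conv\braces{q,a_1,\dots,a_n}$ gives the proposition. You must take this limit in the affine condition, as you do, and not in the conic one, which the example shows is not preserved under limits.

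With this step made explicit, your argument is a complete proof in the spirit of B\'ar\'any's minimal-distance proof of the colorful Carath\'eodory theorem. A minor point: no compactness is needed to make the $A_i$ finite, since $p$ lies in the convex hull of finitely many points of $A_i$ by definition.
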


We first record a standard non-colorful consequence.

\begin{prp}
\label{prp:segment_caratheodory}
Let $A \subset \R^d$, and let $S \subset \conv A$ be a segment.  Then
there is $A' \subset A$ with $\card{A'} \le 2d$ such that
\[
 S \subset \conv A'.
\]
\end{prp}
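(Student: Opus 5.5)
We prove \Href{Proposition}{prp:segment_caratheodory} by lifting to $\R^{2d}$ and applying \Href{Proposition}{prp:colorful_caratheodory} with all color classes equal to a single set.

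Write $S = [s, t]$ with $s, t \in \conv A$; if $s = t$, ordinary Carath\'eodory gives $d+1 \le 2d$ points, so assume $s \ne t$. Put
\[
\widehat A := \braces{(a, b) \st a, b \in A} \subset \R^d \times \R^d .
\]
Since $s, t \in \conv A$, we have $(s,t) \in \conv A \times \conv A = \conv \widehat A$. We want $(s,t)$ to lie in the convex hull of few pairs whose coordinates use only $2d$ distinct points of $A$. A pair $(a,b)$ costs two points, so a plain Carath\'eodory bound of $2d+1$ pairs is far too expensive. Instead we project away one degree of freedom and use the free point $q$ in \Href{Proposition}{prp:colorful_caratheodory}.

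Choose coordinates so that $t - s = \lambda e_1$ with $\lambda > 0$. The plan is to express $s$ and $t$ using one shared point set $A'$. Take a point $p$ in the relative interior of $S$, say the midpoint. Let $\pi \colon \R^d \to \R^{d-1}$ be the projection killing $e_1$, so that $\pi(S) = \pi(p)$ is a single point.

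\begin{enumerate}[1.]
\item Apply ordinary Carath\'eodory in $\R^{d-1}$ to $\pi(A)$ and $\pi(p) \in \conv \pi(A)$. This gives at most $d$ points whose projections capture $\pi(p)$. Hence the fibre line $\pi^{-1}(\pi(p))$ meets their convex hull in a segment $I$.
\item If $I \supset S$ we are done. Otherwise extend $I$ on one or both sides. For the endpoint $t$, apply Carath\'eodory in the $d$-dimensional space $\R^{d-1} \times \R$ to the lifted point.
\end{enumerate}

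This case split is the delicate part, so the cleaner route is the one below.

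Apply \Href{Proposition}{prp:colorful_caratheodory} in $\R^{n}$ with $n = 2d-1$, to the point $p' := (s, \pi(t)) \in \R^{d} \times \R^{d-1}$ and the set
\[
\widetilde A := \braces{(a, \pi(b))} .
\]
Use $n$ identical color classes. Choose $q$ so that $q$ together with $n$ pairs encodes $t - s$ along $e_1$: take $q := (s, \pi(t)) - \mu(e_1, 0)$ with $\mu$ large, or a point at infinity in the $e_1$ direction. This yields $n = 2d-1$ pairs, so a priori up to $4d-2$ points, which is too many.

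The main obstacle is exactly this counting: each lifted pair uses two points of $A$. So the lift must be designed so that each chosen element corresponds to a \emph{single} point of $A$. The key step is therefore to use lifted points $(a, \iprod{a}{e_1})$-type encodings with one point of $A$ each, in dimension $2d-1$ or $2d$, together with the extra $q$.

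If that fails, fall back on the elementary argument. Take a facet-type decomposition: the line through $S$ exits $\conv A$ at two boundary points $s', t'$. Each of $s'$ and $t'$ lies on a face of dimension at most $d-1$, so each is in the hull of at most $d$ points of $A$ by Carath\'eodory on a supporting hyperplane. Since $S \subset [s', t']$, this gives $\card{A'} \le 2d$ directly.

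This last argument is in fact the simplest complete proof, and it is the one I would write. Compactness is not needed, because Carath\'eodory applies to points of $\conv A$ on a supporting hyperplane, whose intersection with $A$ has affine dimension at most $d-1$. If $\conv A$ is unbounded along the line, replace $s'$ and $t'$ by $s$ and $t$ themselves and argue via a supporting hyperplane of the relevant minimal face.
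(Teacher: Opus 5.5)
Your final boundary-point argument has a gap at the sentence claiming that compactness is not needed. The exit points $s',t'$ of the line through $S$ need not lie in $\conv A$, and they need not exist.

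\textbf{Non-closed hull.} If $A$ is the open unit ball, then $\conv A = A$ meets the line in an open chord. Its endpoints lie on the unit sphere, so they are not convex combinations of points of $A$ at all. Hence the step ``each of $s', t'$ is in the hull of at most $d$ points of $A$'' is false here.

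\textbf{Unbounded hull.} Your fallback for this case also fails. Take $A = \mathbb{Z} \times \braces{0,3} \subset \R^2$ and $S = [(-\tfrac12,1),(\tfrac12,1)]$. Then $\conv A = \R \times [0,3]$, so the line $x_2 = 1$ never leaves $\conv A$. Both endpoints of $S$ are interior points, so their minimal face is $\conv A$ itself, and no supporting hyperplane passes through them. In fact neither endpoint lies in the convex hull of $d=2$ points of $A$: a segment between points of $A$ meets the line $x_2=1$ only at abscissae $(2a+b)/3 \in \tfrac13\mathbb{Z}$. So your argument only yields $2(d+1)$ points here.

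\textbf{The repair.} Make a reduction first. Write $s$ and $t$ as finite convex combinations of points of $A$, and replace $A$ by the finite set $A_0$ of points used. Then $\conv A_0$ is a polytope containing $S$, so the line through $S$ meets it in a compact segment $[s',t'] \supset S$ with $s',t' \in \conv A_0$.
\begin{enumerate}[(i)]
\item If $\dim \conv A_0 \le d-1$, Carath\'eodory in its affine hull already gives at most $d$ points for each of $s,t$.
\item Otherwise $s'$ is a boundary point. Let $H$ be a supporting hyperplane at $s'$. Every point with positive weight in a convex representation of $s'$ by points of $A_0$ lies in $H$, so Carath\'eodory in $H$ gives at most $d$ points. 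The same holds for $t'$.
\end{enumerate}

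\textbf{Comparison with the paper.} With this reduction your route is complete, and it is more elementary than the paper's. The paper applies the very colorful Carath\'eodory theorem (Proposition~\ref{prp:colorful_caratheodory}) twice, with the midpoint $q$ of $S$ as the additional point. This gives at most $d$ points for each endpoint relative to $q$. A strict separation argument then shows $q \in \conv(A_a \cup A_b)$, hence $S \subset \conv(A_a \cup A_b)$. The paper's version works for arbitrary $A$ with no finiteness or closedness reduction. It also uses the same tool, an additional free point in the very colorful Carath\'eodory theorem, as the $(2d+1)$-color lifting in Lemma~\ref{lem:segment_2dplus1}.

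Finally, the two lifting attempts at the start of your proposal are not proofs; you note yourself that the pair lift costs up to $4d-2$ points. They should be removed.
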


\begin{proof}
Write $S = \brackets{a,b}$. If $a = b$, the usual Carath\'{e}odory
theorem gives the result, since $d+1 \le 2d$. Otherwise, put
$q = (a+b)/2$. Apply \Href{Proposition}{prp:colorful_caratheodory} to
$d$ copies of $A$, first with $p = a$ and then with $p = b$, using
$q$ as the additional point in both applications. We obtain subsets
$A_a, A_b \subset A$, each of size at most $d$, such that
\[
 a \in \conv\parenth{A_a \cup \braces{q}},
 \qquad b \in \conv\parenth{A_b \cup \braces{q}}.
\]
Put $P = \conv\parenth{A_a \cup A_b}$. If $q \notin P$, a hyperplane
strictly separating $q$ from $P$ places both $a$ and $b$ strictly on
the side containing $P$, contrary to $q = (a+b)/2$. Thus $q \in P$,
so $S \subset P$, as required.
\end{proof}

For comparison, a standard lifting proves the desired result with
$2d + 1$ colors.

\begin{lem}
\label{lem:segment_2dplus1}
Let $X_1, \dots, X_{2d + 1} \subset \R^d$, and suppose that
\[
 c_i + \brackets{-u, u} \subset \conv X_i,
 \qquad i \in \brackets{2d + 1}.
\]
Then there are $x_i \in X_i$, $i \in \brackets{2d + 1}$, and
$c \in \conv\braces{c_1, \dots, c_{2d + 1}}$ such that
\[
 c + \brackets{-u, u}
 \subset \conv\braces{x_1, \dots, x_{2d + 1}}.
\]
\end{lem}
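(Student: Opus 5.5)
The plan is to reduce the statement to the colorful Carath\'eodory theorem in a lifted space of dimension $2d$, where $2d+1$ colors is exactly what that theorem needs. This explains why the naive argument gives $2d+1$ rather than $2d$ colors.

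First translate each color to the origin: put $A_i := X_i - c_i$, so that $\brackets{-u,u} \subset \conv A_i$. We seek a transversal $x_i \in X_i$ together with two probability vectors $\lambda, \mu$ on $\brackets{2d+1}$ such that
\[
 \sum_i \lambda_i x_i = c - u, \qquad \sum_i \mu_i x_i = c + u,
 \qquad c \in \conv\braces{c_1, \dots, c_{2d+1}} .
\]
Then $c \pm u \in \conv\braces{x_1,\dots,x_{2d+1}}$, and the whole segment $c+\brackets{-u,u}$ lies there by convexity. The obstruction is that the weights $\lambda$ and $\mu$ may differ, while the center $c$ must be a single point.

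The first thing I would try is to force $\lambda = \mu$ by lifting. Use the ordinary colorful Carath\'eodory theorem, that is, \Href{Proposition}{prp:colorful_caratheodory_centers} with all centers equal to the target point, in $\R^{2d}$ with $2d+1$ colors. The lifted sets are
\[
 L_i := \braces{(a - s u,\; a + s u) \st a \in A_i,\ s\in\R}\cap K_i \subset \R^d\times\R^d,
\]
for suitable convex truncations $K_i$, and the target point is $p=(-u,u)$. The target lies in $\conv L_i$ because $\pm u \in \conv A_i$. A rainbow choice $(a_i - s_i u, a_i + s_i u)$ with common weights $\lambda$ then gives
\[
 \sum_i \lambda_i a_i - \Bigl(\sum_i\lambda_i s_i\Bigr)u = -u,
 \qquad
 \sum_i \lambda_i a_i + \Bigl(\sum_i \lambda_i s_i\Bigr)u = u .
\]
With $x_i := a_i + c_i \in X_i$ and $c := \sum_i \lambda_i c_i$, this yields $c + \bigl(\sum\lambda_i s_i - 1\bigr)u \in \conv\braces{x_i}$. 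It does not yet yield $c\pm u$ themselves. So this lift alone is too crude, and it must be replaced by one in which the two endpoints are carried by the same points $x_i$ but with different weights.

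The refined lift is the step I expect to be the main obstacle. I would proceed as follows. Work in $\R^d\times\R^d$ and map each $x\in A_i$ to the pair $(x,x)$. Replace the single target point by the requirement that $(-u,u)$ lie in $\conv\bigl(\braces{(a_i,0)}\bigr)\times\conv\bigl(\braces{(0,a_i)}\bigr)$ for one rainbow choice. This is a statement about two points in a product, which is handled by the very colorful Carath\'eodory theorem (\Href{Proposition}{prp:colorful_caratheodory}) applied in $\R^{2d}$ with an auxiliary point $q$.

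I would take $q$ to be the lifted midpoint $(0,0)$, exactly as in the proof of \Href{Proposition}{prp:segment_caratheodory}. Then $2d$ colors handle the $2d$ coordinates, and the extra color absorbs $q$. The separation argument from \Href{Proposition}{prp:segment_caratheodory} shows that $q$ itself lies in the rainbow hull, which gives the common center $c=\sum\lambda_i c_i$. Checking that the auxiliary point can be absorbed by one additional color, rather than by separate points for each endpoint, is where the count $2d+1$ enters, and it is the delicate part of the plan.
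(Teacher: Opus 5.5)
\textbf{There is a genuine gap.} Your first lift fails for the reason you give, but the ``refined lift'' meant to replace it is never defined, and the ingredients you name cannot close the gap.

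\emph{The diagonal map and the counting.} Under the map $x\mapsto(x,x)$, convex hulls stay inside the diagonal, so they never contain $(-u,u)$. In $\R^{2d}$ the very colorful Carath\'eodory theorem uses exactly $2d$ colors, each containing one common target $p$, and the auxiliary point $q$ is arbitrary. Your count ``$2d$ colors for the coordinates plus one color absorbing $q$'' therefore matches no version of that theorem.

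\emph{The separation argument does not survive different centers.} Suppose you use colors $1,\dots,d$ for $-u$ and colors $d+1,\dots,2d$ for $+u$ in $A_i=X_i-c_i$, with $q=0$. You get $\sum_{j\le d}\mu_j a_j=-u$ and $\sum_{j>d}\nu_j a_j=u$. Translating back gives points $c'-u/\sigma$ and $c''+u/\tau$ of $\conv\braces{x_1,\dots,x_{2d}}$, where $c'\in\conv\braces{c_1,\dots,c_d}$ and $c''\in\conv\braces{c_{d+1},\dots,c_{2d}}$. Since $c'\ne c''$ in general, the segment joining these points is not parallel to $u$. Likewise, the separation step gives $0\in\conv\braces{a_1,\dots,a_{2d}}$, and even $\brackets{-u,u}\subset\conv\braces{a_i}$. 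Neither says anything about a translate of $\brackets{-u,u}$ inside $\conv\braces{x_i}$, because each representation $\sum\lambda_i a_i=w$ carries its own center $\sum\lambda_i c_i$. This is exactly the obstruction you identified at the start (different weights, one common center), and the proposal never resolves it.

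\emph{The missing idea.} You need a lift that couples both endpoints in a single convex combination and lets the theorem decide which colors serve which endpoint. The paper uses
$L_i=\braces{(\varepsilon x,\varepsilon,x-c_i) : x\in X_i,\ \varepsilon\in\braces{-1,1}}\subset\R^d\times\R\times\R^d$,
checks that $(u,0,0)\in\conv L_i$, and applies the very colorful Carath\'eodory theorem in $\R^{2d+1}$ with the origin as auxiliary point. So $2d+1$ is the number of colors required for $n=2d+1$; the lift is not in $\R^{2d}$.
\begin{itemize}
\item The middle coordinate forces the colors with $\varepsilon_i=1$ and those with $\varepsilon_i=-1$ to carry equal total weight $s/2$.
\item The first block gives $p_+-p_-=\frac{2}{s}u$.
\item The last block gives $\sum\lambda_i x_i=\sum\lambda_i c_i$, so the midpoint of $p_\pm$ is $\frac1s\sum\lambda_i c_i\in\conv\braces{c_1,\dots,c_{2d+1}}$.
\item Finally, $s\le 1$ gives $c+\brackets{-u,u}\subset\brackets{p_-,p_+}$.
\end{itemize}
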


\begin{proof}
For $i \in \brackets{2d + 1}$, put
\[
 L_i
 :=
 \braces{
  \parenth{\varepsilon x, \varepsilon, x - c_i} :
  x \in X_i,\ \varepsilon \in \braces{-1, 1}
 }
 \subset \R^d \times \R \times \R^d.
\]
Choose finitely supported coefficients
$\alpha_{i,x}^{+}, \alpha_{i,x}^{-} \ge 0$ such that
\[
 \sum_{x \in X_i}\alpha_{i,x}^{+}
 = \sum_{x \in X_i}\alpha_{i,x}^{-} = 1,
 \qquad
 c_i \pm u = \sum_{x \in X_i}\alpha_{i,x}^{\pm}x.
\]
Then
\[
 \parenth{u, 0, 0}
 = \frac{1}{2}\sum_{x \in X_i}\alpha_{i,x}^{+}
   \parenth{x, 1, x - c_i}
 + \frac{1}{2}\sum_{x \in X_i}\alpha_{i,x}^{-}
   \parenth{-x, -1, x - c_i},
\]
so $\parenth{u, 0, 0} \in \conv L_i$.

Apply \Href{Proposition}{prp:colorful_caratheodory} in
$\R^{2d + 1}$, with the origin as the additional point.  There are
$x_i \in X_i$, $\varepsilon_i \in \braces{-1, 1}$,
$i \in \brackets{2d + 1}$, and coefficients
$\lambda_0, \lambda_1, \dots, \lambda_{2d + 1} \ge 0$ such that
\[
 \parenth{u, 0, 0}
 = \sum_{i \in \brackets{2d + 1}}\lambda_i
   \parenth{\varepsilon_i x_i, \varepsilon_i, x_i - c_i},
 \qquad
 \lambda_0 + \sum_{i \in \brackets{2d + 1}}\lambda_i = 1.
\]
Set
\[
 I_+ := \braces{i \in \brackets{2d + 1} : \varepsilon_i = 1},
 \qquad
 I_- := \braces{i \in \brackets{2d + 1} : \varepsilon_i = -1},
\]
and
\[
 s := \sum_{i \in \brackets{2d + 1}}\lambda_i = 1 - \lambda_0.
\]
Since $u \ne 0$, we conclude $s > 0$.  Comparing the three coordinate
blocks gives
\[
 \sum_{i \in \brackets{2d + 1}}\lambda_i\varepsilon_i x_i = u,
 \qquad
 \sum_{i \in \brackets{2d + 1}}\lambda_i\varepsilon_i = 0,
 \qquad
 \sum_{i \in \brackets{2d + 1}}\lambda_i(x_i - c_i) = 0.
\]
Consequently,
\[
 \sum_{i \in I_+}\lambda_i
 = \sum_{i \in I_-}\lambda_i = \frac{s}{2}.
\]
Therefore,
\[
 p_+ := \frac{2}{s}\sum_{i \in I_+}\lambda_i x_i,
 \qquad
 p_- := \frac{2}{s}\sum_{i \in I_-}\lambda_i x_i
\]
belong to $\conv\braces{x_1, \dots, x_{2d + 1}}$ and satisfy
\[
 p_+ - p_- = \frac{2}{s}u.
\]
Their midpoint is
\[
 c := \frac{p_+ + p_-}{2}
 = \frac{1}{s}\sum_{i \in \brackets{2d + 1}}\lambda_i x_i
 = \frac{1}{s}\sum_{i \in \brackets{2d + 1}}\lambda_i c_i
 \in \conv\braces{c_1, \dots, c_{2d + 1}}.
\]
Thus, $p_\pm = c \pm \frac{1}{s}u$.  Since $s \le 1$, the segment
$\brackets{p_-, p_+}$ contains $c + \brackets{-u, u}$.
\end{proof}

Thus, the essential difficulty is the passage from $2d + 1$ colors to
$2d$ colors.  We next describe the mechanism that makes this passage
possible.  Put
\[
 C := \conv\braces{c_1, \dots, c_{2d}}.
\]
In \Href{Section}{sec:encoding}, we associate with every
$x \in \R^d$ an open convex set $F_x^C$ in a
$2d$--dimensional affine space $E_u$ so that, for every non-empty
finite set $I \subset \R^d$,
\[
 \text{there is }c \in C\text{ with }
 c + \brackets{-u, u} \subset \conv I
 \quad\Longleftrightarrow\quad
 \bigcap_{x \in I}F_x^C = \emptyset.
\]
This is the geometric dualization used in the proof.

The dimension of $E_u$ is still $2d$.  Consequently, an ordinary
colorful Helly-type result would require $2d + 1$ colors.  The particular
sets $F_x^C$, however, have an additional ``escape'' property: for every
finite $I$, every connected component of
\[
 E_u \setminus \bigcup_{x \in I}F_x^C
\]
is unbounded.  After passing to the one-point compactification of
$E_u$, the closures of all these components meet at the point at
infinity, so the compactified complement is connected.  Alexander
duality then forces
the additional vanishing of reduced homology in degree $2d - 1$.
Thus,   the Topological Colorful Helly
theorem \cite{kalai2005topological} can be applied for $2d$ colors.  Notice that the dimension of the
auxiliary space is not reduced!

\section{Topological preliminaries}
\label{sec:topology}

We collect the topological facts used in the proof.  Our topological
conventions follow Hatcher \cite{hatcher2002algebraic}.  All homology and
cohomology groups in this section have coefficients in $\mathbb Q$.
For a finite simplicial complex, we do not distinguish between the
complex and its geometric realization when writing its homology.

A finite simplicial complex on a finite vertex set $V$ is a collection
$K \subset 2^V$ that contains every singleton and is closed under
taking subsets.  For $W \subset V$, the \emph{induced subcomplex} of
$K$ on $W$ is
\[
 K[W] := \braces{\sigma \in K : \sigma \subset W}.
\]

Let $E$ be a topological space, let $V$ be finite, and let
$G_v \subset E$ be non-empty for every $v \in V$.  The \emph{nerve}
of this family is the simplicial complex
\[
 N(G_v : v \in V)
 :=
 \braces{
  W \subset V : \bigcap_{v \in W}G_v \ne \emptyset
 }.
\]
The intersection corresponding to $W = \emptyset$ is understood to be
$E$.  Directly from the definition,
\begin{equation}
 N(G_v : v \in V)[W]
 = N(G_v : v \in W)
 \qquad
 \text{for every } W \subset V.
 \label{eq:induced_nerve}
\end{equation}

An open cover is called a \emph{good cover} if every non-empty
intersection of finitely many members of the cover is contractible.
We shall use the following form of the Nerve Theorem; see
\cite[Corollary~4G.3]{hatcher2002algebraic}.

\begin{prp}[Nerve Theorem]
\label{prp:nerve_theorem}
Let $\mathcal U$ be a good open cover of a paracompact space $X$.
Then $X$ is homotopy equivalent to the nerve of $\mathcal U$.
\end{prp}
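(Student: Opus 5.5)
The statement is the Nerve Theorem for a good open cover of a paracompact space. The paper quotes it from Hatcher, Corollary~4G.3, so the natural plan is to reproduce the standard argument in outline rather than invent a new one.

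The proof goes through the realization of the nerve and a partition of unity. Since $X$ is paracompact and $\mathcal U = \{U_\alpha\}$ is an open cover, there is a partition of unity $\{\varphi_\alpha\}$ subordinate to $\mathcal U$. This gives a map $f \colon X \to N(\mathcal U)$, $x \mapsto \sum_\alpha \varphi_\alpha(x)\, v_\alpha$. It is well defined because the indices $\alpha$ with $\varphi_\alpha(x) > 0$ have $x$ in the common intersection of the corresponding $U_\alpha$, so they span a simplex of the nerve.

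For the map in the other direction, $g \colon N(\mathcal U) \to X$, I would build $g$ by induction over the skeleta:
\begin{enumerate}
\item On a vertex $v_\alpha$, choose any point of $U_\alpha$.
\item Suppose $g$ is defined on the boundary of a simplex $\sigma = \{\alpha_0,\dots,\alpha_k\}$, and maps each face $\tau\subset\sigma$ into $U_\tau := \bigcap_{\alpha\in\tau} U_\alpha$. Then $g(\partial\sigma)$ lies in $\bigcup_{\tau\subsetneq\sigma} U_\tau$.
\end{enumerate}
This union is not contractible, so the naive extension step fails. This is why the actual argument passes through a homotopy colimit or mapping-cylinder comparison, Hatcher's complex $\Delta X_{\mathcal U}$.

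The cleaner route, which I would follow, uses Hatcher's Proposition~4G.2. Form the space $\Delta X_{\mathcal U} = \bigcup_\sigma U_\sigma \times \Delta^\sigma$, a subspace of $X \times N(\mathcal U)$. The projection $p\colon \Delta X_{\mathcal U}\to X$ is a homotopy equivalence whenever $X$ is paracompact: a section is given by $x\mapsto (x, f(x))$, and $p$ has contractible fibers, which are simplices. The straight-line homotopy in the simplex coordinate shows that the section is a homotopy inverse. The other projection $q\colon \Delta X_{\mathcal U}\to N(\mathcal U)$ has fibers over the interior of $\Delta^\sigma$ equal to $U_\sigma$, which is contractible by the good-cover assumption. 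A gluing/induction over simplices of the nerve, comparing with the realization where every $U_\sigma$ is replaced by a point, then shows $q$ is a homotopy equivalence. Composing gives $X\simeq N(\mathcal U)$.

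The main obstacle is this last step: showing that a map of ``complexes of spaces'' which is a homotopy equivalence on each piece $U_\sigma\to\mathrm{pt}$ induces a homotopy equivalence of total spaces. I would establish it by induction on skeleta, using that each $U_\sigma\times\Delta^\sigma$ is attached along $U_\sigma\times\partial\Delta^\sigma$, which is a cofibration. Pushouts along cofibrations preserve homotopy equivalences (the gluing lemma). In this paper the nerves are finite, so no colimit subtlety arises.
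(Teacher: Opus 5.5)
This is essentially the argument behind the paper's statement. The paper gives no proof of its own and cites Hatcher's Corollary~4G.3, which Hatcher derives essentially as you propose: a partition-of-unity section shows that $p\colon \Delta X_{\mathcal U}\to X$ is a homotopy equivalence, and a skeleton-by-skeleton gluing argument shows that collapsing each contractible $U_\sigma$ to a point does not change the homotopy type.

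One point-set caveat concerns the gluing lemma. It applies to $\Delta X_{\mathcal U}$ with the quotient topology of $\bigsqcup_\sigma U_\sigma\times\Delta^\sigma$, not the subspace topology of $X\times N(\mathcal U)$. These differ already for the cover of $\R$ by $(-\infty,1)$ and $(0,\infty)$, because the pieces $U_\sigma\times\Delta^\sigma$ are not closed in the subspace, so finiteness of the nerve does not remove the issue. You should therefore work with the quotient topology. There, check continuity of the section $x\mapsto(x,f(x))$ and of the straight-line homotopy using local finiteness of the partition of unity, which makes both factor locally through a single piece.
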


In particular, a finite collection of open convex sets is a good cover
of its union.

Let $r \ge 0$ be an integer.  A finite simplicial complex $K$ on $V$
is called \emph{$r$--Leray} if
\[
 \widetilde H_j\parenth{K[W];\mathbb Q} = 0
 \qquad
 \text{for every } W \subset V
 \text{ and every } j \ge r.
\]

If
\[
 V = V_1\mathbin{\dot\cup}\cdots\mathbin{\dot\cup}V_s
\]
is a partition, a subset of $V$ is called \emph{rainbow}
if it contains exactly one vertex from every class.

We use the following partition-matroid consequence of the topological
colorful Helly theorem of Kalai and Meshulam
\cite[Theorem~1.6]{kalai2005topological}.

\begin{prp}[Topological Colorful Helly theorem]
\label{prp:topological_colorful_helly}
Let $K$ be an $r$--Leray complex whose vertex set is partitioned into
$r+1$ non-empty classes
\[
 V = V_1\mathbin{\dot\cup}\cdots\mathbin{\dot\cup}V_{r+1}.
\]
Suppose that every  rainbow set is a face of $K$.  Then $V_i$ is a
face of $K$ for some $i \in \brackets{r+1}$.
\end{prp}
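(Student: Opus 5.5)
The plan is to read the statement off directly from the matroidal form of the Kalai--Meshulam theorem \cite[Theorem~1.6]{kalai2005topological}. That theorem says the following. Let $X$ be an $r$--Leray complex on a finite vertex set $V$, and let $M$ be a matroid on $V$ with rank function $\rho$. If every independent set of $M$ is a face of $X$, then some face $\tau \in X$ satisfies $\rho(V \setminus \tau) \le r$. So the whole task is to pick the right matroid and then interpret the rank inequality.

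First, I take $M$ to be the partition matroid of $V = V_1\mathbin{\dot\cup}\cdots\mathbin{\dot\cup}V_{r+1}$. A set $S \subset V$ is independent if it contains at most one vertex from each class. Its rank function is
\[
 \rho(S) = \card{\braces{i \in \brackets{r+1} : S \cap V_i \ne \emptyset}}.
\]
This is a matroid by the standard fact that partition matroids are matroids.

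Next, I check the hypothesis $M \subset K$. Let $S$ be independent. Every class $V_i$ is non-empty, so I can extend $S$ to a rainbow set $R \supset S$ by choosing one vertex from each class that $S$ misses. By assumption $R$ is a face of $K$. Since $K$ is closed under taking subsets, $S$ is also a face.

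Now I apply \cite[Theorem~1.6]{kalai2005topological} with $X = K$. It gives a face $\tau \in K$ with $\rho(V \setminus \tau) \le r$. There are $r+1$ classes, so $V \setminus \tau$ is disjoint from at least one class $V_i$. That means $V_i \subset \tau$, and therefore $V_i \in K$ because $K$ is closed under subsets. This is the required conclusion.

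None of these steps is a real obstacle. The only point needing care is matching conventions with \cite{kalai2005topological}. Their notion of $r$--Leray, namely vanishing of $\widetilde H_j(K[W])$ for all $W$ and all $j \ge r$, must coincide with the definition used here, and the homology is over a field, here $\mathbb Q$. Their inequality must read $\rho(V\setminus\tau) \le r$ and not $\le r+1$. Once these conventions agree, the argument above is complete.
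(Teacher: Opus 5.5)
Your proposal is correct and follows essentially the paper's route. The paper gives no separate proof: it cites this statement as the partition-matroid specialization of Kalai--Meshulam's Theorem~1.6. Your argument spells out that specialization, namely extending independent sets to rainbow faces and reading $V_i \subset \tau$ off the bound $\rho(V\setminus\tau)\le r$ with $r+1$ classes.
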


We next recall the form of Alexander duality needed below.  Let $E$ be
an $m$--dimensional affine Euclidean space.  Its one-point
compactification is
\[
 E^+ := E\cup\braces{\infty}.
\]
The neighborhoods of $\infty$ are the sets containing
\[
 \braces{\infty}\cup(E\setminus Q)
\]
for some compact set $Q \subset E$.  The space $E^+$ is homeomorphic
to $S^m$.

\begin{prp}[Alexander duality]
\label{prp:alexander_duality}
Let $B$ be a non-empty proper compact subset of $E^+$.  Then
\[
 \widetilde H_j\parenth{E^+\setminus B;\mathbb Q}
 \simeq
\widetilde{\check H}^{\,m-j-1}\parenth{B;\mathbb Q},
 \qquad
 0 \le j \le m-1.
\]
In particular, if $B$ is connected, then
\[
 \widetilde H_{m-1}
 \parenth{E^+\setminus B;\mathbb Q}
 = 0.
\]
\end{prp}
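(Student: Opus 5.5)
The plan is to reduce the statement to the classical Alexander duality for compact subsets of a sphere, and then to read off the special case $j = m-1$ from the behaviour of \v{C}ech cohomology in degree zero. Since $E^+$ is homeomorphic to $S^m$, I would first fix a homeomorphism $E^+ \cong S^m$ and transport $B$ to a non-empty proper compact subset of $S^m$. Both sides of the claimed isomorphism are topological invariants: singular homology of the open complement, and \v{C}ech cohomology of the compact set. Hence it suffices to prove the statement for $S^m$.

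For $S^m$, I would invoke the standard form of Alexander duality for arbitrary compact subsets, with \v{C}ech cohomology on the right-hand side. For compact $B \subsetneq S^m$ with $B \neq \emptyset$ it gives
\[
 \widetilde H_j\parenth{S^m\setminus B;\mathbb Q}
 \simeq
 \widetilde{\check H}^{\,m-j-1}\parenth{B;\mathbb Q}
 \qquad\text{for all } j.
\]
This is in Spanier, Chapter~6, Theorem~6.2.16. It also follows from Hatcher's Theorem~3.44, which treats locally contractible compacta, by writing $B$ as the intersection of a decreasing sequence of compact polyhedral neighbourhoods $N_k$ and passing to limits. On the cohomology side, the continuity of \v{C}ech cohomology gives $\check H^*(B) = \varinjlim \check H^*(N_k)$. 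On the homology side, $S^m\setminus B = \bigcup_k (S^m\setminus N_k)$ is an increasing union, and singular homology commutes with such direct limits because singular chains have compact support. The duality isomorphisms for the polyhedra $N_k$ are natural with respect to inclusions, so they pass to the limit. This limiting step is the only point needing care, but it is standard and I would simply cite it. The hypotheses that $B$ is non-empty and proper make both reduced theories well behaved, and the range $0 \le j \le m-1$ keeps the cohomological degree $m-j-1$ between $0$ and $m-1$.

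For the final assertion, take $j = m-1$, so that the right-hand side is $\widetilde{\check H}^{\,0}(B;\mathbb Q)$. For a compact Hausdorff space, $\check H^0(B;\mathbb Q)$ is the group of locally constant $\mathbb Q$-valued functions on $B$. If $B$ is connected, every such function is constant, so $\check H^0(B;\mathbb Q)\simeq\mathbb Q$ and the reduced group vanishes. Therefore $\widetilde H_{m-1}(E^+\setminus B;\mathbb Q)=0$.

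The main obstacle is the identification of degree-zero \v{C}ech cohomology with locally constant functions, which I would cite rather than reprove. Its purpose here is that it lets us use mere connectedness of $B$, without path-connectedness. This matters in the application: the compact set $B$ there is a union of closures of unbounded components together with $\infty$, which is connected but need not be path-connected or locally nice.
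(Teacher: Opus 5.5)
Your proposal is correct and matches the paper, which also just invokes Alexander duality on $E^+ \cong S^m$ in its \v{C}ech form and reads off the case $j=m-1$ from $\widetilde{\check H}^{\,0}(B;\mathbb Q)=0$ for connected $B$. For the general compact case the paper cites Hatcher's locally contractible version together with his remarks on passing to \v{C}ech cohomology. Your limiting argument through polyhedral neighbourhoods, and your identification of $\check H^0$ with locally constant functions, make explicit what the paper leaves to the references.
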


For locally contractible compact sets this is
\cite[Corollary~3.45]{hatcher2002algebraic}.  The extension to
arbitrary compact sets, obtained by replacing singular cohomology by
\v{C}ech cohomology, is discussed in
\cite[pp.~256--257]{hatcher2002algebraic}.  We use this general form
because the compact set $B$ below is not assumed to be locally
contractible.

The next lemma makes precise the improvement over the usual
$m$--Leray bound.  The ambient dimension alone gives homology
vanishing in degrees $j \ge m$.  The absence of bounded complementary
components allows Alexander duality to eliminate the remaining group
in degree $m - 1$.

\begin{lem}[A finite union without bounded holes]
\label{lem:bounded_hole_nerve}
Let $E$ be an $m$--dimensional affine Euclidean space, where $m \ge 1$,
and let $V$ be a non-empty finite set.  For every $v \in V$, let
$G_v \subset E$ be a non-empty open convex set.  If every connected
component of
\[
 E \setminus \bigcup_{v \in V}G_v
\]
is unbounded, then
\[
 \widetilde H_j\parenth{N(G_v : v \in V);\mathbb Q} = 0
 \qquad
 \text{for every } j \ge m - 1.
\]
\end{lem}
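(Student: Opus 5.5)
Set $U := \bigcup_{v \in V} G_v$ and $B := E^+ \setminus U$. The plan is to move the problem from the nerve to the union $U$, and then from $U$ to its complement $B$ in the sphere $E^+$. On the complement side, the hypothesis about unbounded components becomes a connectedness statement, and Alexander duality converts that into the vanishing we need.

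\textbf{Step 1: from the nerve to the union.} The sets $G_v$ are open and convex, and $U$ is an open subset of $E$, hence metrizable and paracompact. Every non-empty finite intersection of the $G_v$ is convex, hence contractible, so $\{G_v\}$ is a good open cover of $U$. By \Href{Proposition}{prp:nerve_theorem}, $N(G_v : v \in V)$ is homotopy equivalent to $U$. It therefore suffices to show $\widetilde H_j(U;\mathbb Q) = 0$ for all $j \ge m-1$.

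\textbf{Step 2: degrees $j \ge m$.} Here the argument is routine. $U$ is homotopy equivalent to a finite simplicial complex, the nerve. The cleanest route is to apply \Href{Proposition}{prp:alexander_duality} to the range $j \le m-1$ only, and to argue directly that $H_j(U) = 0$ for $j \ge m$, because $U$ is an open subset of $\R^m$; this is \cite[Proposition~3.29]{hatcher2002algebraic}, that noncompact connected $m$-manifolds have vanishing $H_j$ for $j\ge m$, applied componentwise. Note that $U \ne E$, since otherwise the complement is empty and the case is trivial. Alternatively, apply duality in $E^+ \cong S^m$ to get $\widetilde H_m(U) \cong \widetilde{\check H}^{-1}(B) = 0$.

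\textbf{Step 3: degree $m-1$, the main obstacle.} The set $B$ is compact in $E^+$, since $U$ is open in $E^+$. It contains $\infty$, so it is non-empty, and it misses $U \ne \emptyset$, so it is proper. By \Href{Proposition}{prp:alexander_duality} it suffices to show that $B$ is connected. This is where the hypothesis enters, and it is the delicate point. Write $B = \{\infty\} \cup \bigcup_\alpha K_\alpha$, where the $K_\alpha$ are the connected components of the closed set $E \setminus U$.

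Fix a component $K_\alpha$. Its closure in $E^+$ is $K_\alpha$ or $K_\alpha \cup \{\infty\}$, and it is connected. Because $K_\alpha$ is closed in $E$ and unbounded, it is not compact, so $\infty$ lies in its closure. Thus each $K_\alpha \cup \{\infty\}$ is connected and contains $\infty$. Consequently $B$, being a union of connected sets sharing the point $\infty$, is connected.

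The only care needed is that components may be infinitely many and non-locally-connected. The union argument does not need finiteness, so this causes no trouble. Duality with \v{C}ech cohomology then gives $\widetilde H_{m-1}(E^+ \setminus B) \cong \widetilde{\check H}^{0}(B) = 0$. Since $E^+ \setminus B = U$, this is $\widetilde H_{m-1}(U) = 0$.

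Combining Steps 1–3 yields $\widetilde H_j(N(G_v : v\in V);\mathbb Q)=0$ for all $j\ge m-1$.
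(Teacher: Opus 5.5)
Your proof is correct and is essentially the paper's own argument: the Nerve Theorem reduces to $U=\bigcup_{v\in V}G_v$, Hatcher's Proposition~3.29 applied componentwise handles $j\ge m$, and Alexander duality with \v{C}ech cohomology handles $j=m-1$. The connectedness of $B=E^+\setminus U$ is shown the same way, because $\infty$ lies in the closure in $E^+$ of every closed, unbounded, hence non-compact, component of $E\setminus U$. Your side remarks are unnecessary rather than wrong. The case $E\setminus U=\emptyset$ is already covered, since then $B=\{\infty\}$. The alternative via $\widetilde{\check H}^{-1}(B)$ falls outside the range $0\le j\le m-1$ in which duality is stated in the paper, so Proposition~3.29 is the right tool for $j\ge m$.
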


\begin{proof}
Put
\[
 U := \bigcup_{v \in V}G_v
 \qquad\text{and}\qquad
 K := N(G_v : v \in V).
\]
The sets $G_v$, $v \in V$, form a good open cover of $U$.  Since $U$
is metrizable and hence paracompact,
\Href{Proposition}{prp:nerve_theorem} gives
\begin{equation}
 \widetilde H_j\parenth{K;\mathbb Q}
 \simeq
 \widetilde H_j\parenth{U;\mathbb Q}
 \qquad
 \text{for every } j.
 \label{eq:nerve_union_homology}
\end{equation}

Consider the compact set
\[
 B := E^+\setminus U
 = (E\setminus U)\cup\braces{\infty}.
\]
It is non-empty and proper.  We claim that $B$ is connected.  This is
clear if $E\setminus U$ is empty.  Otherwise, let $A$ run through the
connected components of $E\setminus U$.  Each $A$ is closed in $E$ and
unbounded, so its closure in $E^+$ is
$A\cup\braces{\infty}$.
 This closure is connected because $A$ is
connected.
  These sets all contain $\infty$, and their
union is $B$.  Hence, $B$ is connected.

By \Href{Proposition}{prp:alexander_duality},
\[
 \widetilde H_{m-1}\parenth{U;\mathbb Q}
 \simeq
 \widetilde{\check H}^{\,0}\parenth{B;\mathbb Q}
 = 0.
\]
Every connected component of $U$ is a non-compact $m$--manifold, so
\cite[Proposition~3.29]{hatcher2002algebraic}, applied componentwise,
gives
\[
 \widetilde H_j\parenth{U;\mathbb Q} = 0,
 \qquad
 j \ge m.
\]
The result follows from \eqref{eq:nerve_union_homology}.
\end{proof}

The geometric consequence used in the proof of the segment theorem is
the following.

\begin{lem}[Colorful-hole lemma]
\label{lem:colorful_hole}
Let $E$ be an $m$--dimensional affine Euclidean space, where $m \ge 1$,
and let the finite set $V$ be partitioned into non-empty sets
\[
 V = V_1\mathbin{\dot\cup}\cdots\mathbin{\dot\cup}V_m.
\]
For every $v \in V$, let $G_v \subset E$ be a non-empty open convex
set.  Suppose that
\begin{enumerate}[(i)]
\item for every $W \subset V$, every connected component of
\[
 E \setminus \bigcup_{v \in W}G_v
\]
is unbounded;
\item for every $i \in \brackets{m}$,
\[
 \bigcap_{v \in V_i}G_v = \emptyset.
\]
\end{enumerate}
Then there are $v_i \in V_i$, $i \in \brackets{m}$, such that
\[
 \bigcap_{i \in \brackets{m}}G_{v_i} = \emptyset.
\]
\end{lem}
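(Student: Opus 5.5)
The plan is to deduce the lemma from \Href{Proposition}{prp:topological_colorful_helly} with $r = m-1$, applied to the nerve
\[
 K := N(G_v : v \in V),
\]
whose vertex set $V$ is partitioned into the $m = r+1$ given classes. We argue by contradiction. Suppose that every rainbow choice $v_i \in V_i$, $i \in \brackets{m}$, satisfies $\bigcap_{i} G_{v_i} \ne \emptyset$. Then every rainbow set is a face of $K$. If we also know that $K$ is $(m-1)$--Leray, \Href{Proposition}{prp:topological_colorful_helly} yields a class $V_i$ that is a face of $K$, that is, $\bigcap_{v \in V_i} G_v \ne \emptyset$. This contradicts hypothesis (ii), so some rainbow selection has empty intersection, as required. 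The non-emptiness of the classes $V_i$ and of the sets $G_v$ is given, so the hypotheses of the proposition are in place apart from the Leray property.

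The key step is therefore to verify that $K$ is $(m-1)$--Leray. We need
\[
 \widetilde H_j\parenth{K[W];\mathbb Q} = 0
 \qquad \text{for every } W \subset V \text{ and every } j \ge m-1.
\]
By \eqref{eq:induced_nerve}, $K[W] = N(G_v : v \in W)$. For non-empty $W$, hypothesis (i) says exactly that every connected component of $E \setminus \bigcup_{v \in W} G_v$ is unbounded. \Href{Lemma}{lem:bounded_hole_nerve}, applied to the family $\{G_v\}_{v \in W}$, then gives the required vanishing for $j \ge m-1$.

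The only point needing care is the degenerate case $W = \emptyset$. There $K[\emptyset]$ is the empty complex, and with the usual convention its reduced homology is $\mathbb Q$ in degree $-1$ and zero otherwise. Since $m \ge 1$ forces $j \ge m-1 \ge 0$, no degree $j \ge 0$ is affected, so this case is harmless. I expect no serious obstacle: all the topological content, namely Alexander duality together with the absence of bounded holes, is already packaged in \Href{Lemma}{lem:bounded_hole_nerve}. The proof reduces to matching hypothesis (i), quantified over all $W \subset V$, with the Leray condition, which is also quantified over all induced subcomplexes.
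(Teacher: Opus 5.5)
Your proposal is correct and follows essentially the same route as the paper: both verify that the nerve is $(m-1)$--Leray via \eqref{eq:induced_nerve} and \Href{Lemma}{lem:bounded_hole_nerve}, treating $W=\emptyset$ separately as harmless. Both then argue by contradiction, applying \Href{Proposition}{prp:topological_colorful_helly} with $r=m-1$ against hypothesis (ii).
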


\begin{proof}
Put $K := N(G_v : v \in V)$.  Let $W \subset V$.  If $W$ is non-empty,
assumption \emph{(i)}, \Href{Lemma}{lem:bounded_hole_nerve}, and
\eqref{eq:induced_nerve} give
\[
 \widetilde H_j\parenth{K[W];\mathbb Q} = 0
 \qquad
 \text{for every } j \ge m - 1.
\]
The same assertion is immediate for $W = \emptyset$.  Thus, $K$ is
$(m - 1)$--Leray.  If every  rainbow intersection were non-empty,
every  rainbow set would be a face of $K$.  Applying
\Href{Proposition}{prp:topological_colorful_helly} with $r = m - 1$, we
would obtain an $i \in \brackets{m}$ such that $V_i$ is a face of $K$.
By the definition of the nerve, this means that
\[
 \bigcap_{v \in V_i}G_v \ne \emptyset,
\]
contrary to assumption \emph{(ii)}.
\end{proof}

It remains to construct the family to which
\Href{Lemma}{lem:colorful_hole} will be applied.  The next section gives
the exact intersection criterion, and \Href{Section}{sec:escape} proves
the required condition on complementary components.

\section{Polar encoding of translates of the segment}
\label{sec:encoding}

For non-empty compact convex sets $P, C \subset \R^d$, we first
characterize the existence of $c \in C$ such that
\[
 c + \brackets{-u, u} \subset P.
\]
For a non-empty subset $S$ of a finite-dimensional Euclidean space,
its \emph{support function} is
\[
 h_S(w) := \sup_{x \in S}\iprod{w}{x}.
\]

For two non-empty sets $P, C \subset \R^d$, put
\[
 D_C(P) := \braces{(x - c, y - c) : x, y \in P, c \in C}
 \subset \R^d \times \R^d
\]
and $v := (u, -u)$.

The set $D_C(P)$ provides a useful characterization of the desired
inclusion.
\begin{lem}
\label{lem:product_encoding}
Let $P, C \subset \R^d$ be non-empty compact convex sets.  The set
$D_C(P)$ is compact and convex, and, for every $a, b \in \R^d$,
\begin{equation}
 h_{D_C(P)}(a, b) = h_P(a) + h_P(b) + h_C(-a - b).
 \label{eq:support_difference}
\end{equation}
Moreover,
\begin{equation}
 \text{there is }c \in C\text{ with }
 c + \brackets{-u, u} \subset P
 \quad\Longleftrightarrow\quad
 D_C(P) \cap \braces{\lambda v : \lambda \ge 1} \ne \emptyset.
 \label{eq:ray}
\end{equation}
\end{lem}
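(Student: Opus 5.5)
The plan is to treat $D_C(P)$ as a linear image of a product of compact convex sets. Consider the linear map
\[
 T\colon \R^d\times\R^d\times\R^d \to \R^d\times\R^d,
 \qquad
 T(x,y,c) := (x-c,\,y-c).
\]
Then $D_C(P) = T(P\times P\times C)$.

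Compactness and convexity follow at once, since $P\times P\times C$ is compact and convex and $T$ is linear and continuous. For the support function, we use the adjoint identity $h_{T(K)}(w) = h_K(T^*w)$. Here
\[
 \iprod{(a,b)}{(x-c,y-c)} = \iprod{a}{x} + \iprod{b}{y} + \iprod{-a-b}{c},
\]
so $T^*(a,b) = (a,b,-a-b)$. The support function of a product is the sum of the support functions of the factors, which gives \eqref{eq:support_difference}.

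For \eqref{eq:ray}, first take the direction ($\Rightarrow$). Suppose $c\in C$ satisfies $c\pm u\in P$. Put $x=c+u$ and $y=c-u$. Then $(x-c,y-c) = (u,-u) = v$, so $v\in D_C(P)$, which is the point with $\lambda=1$.

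For the converse ($\Leftarrow$), suppose $\lambda v = (x-c,\,y-c)$ for some $x,y\in P$, $c\in C$ and $\lambda\ge 1$. Then $x = c+\lambda u$ and $y = c-\lambda u$ both lie in $P$. By convexity, the segment $[y,x] = c+\brackets{-\lambda u,\lambda u}$ is contained in $P$. Since $\lambda\ge1$, this segment contains $c+\brackets{-u,u}$, and the same point $c\in C$ is the required witness.

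No step here is a real obstacle. The only care needed is in the support function computation: the sign of the $C$-term, $h_C(-a-b)$, must come out right, and the supremum over the product splits into independent suprema because the three variables $x$, $y$, $c$ range independently. Finiteness of all three terms is guaranteed by compactness.
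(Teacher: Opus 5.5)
Your proposal is correct and matches the paper's proof: the paper also writes $D_C(P)$ as the image of $P\times P\times C$ under $T(x,y,c)=(x-c,y-c)$. It then splits the supremum into $h_P(a)+h_P(b)+h_C(-a-b)$ and obtains the ray equivalence from $c\pm\lambda u\in P$ together with the convexity of $P$.
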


\begin{proof}
The first assertion follows because $D_C(P)$ is the image of
$P \times P \times C$ under the linear map
\[
 T : \R^d \times \R^d \times \R^d \longrightarrow \R^d \times \R^d,
 \qquad T(x,y,c) := (x-c,y-c).
\]
Furthermore,
\[
 h_{D_C(P)}(a, b)
 = \max_{x, y \in P, c \in C}
   (\iprod{a}{x - c} + \iprod{b}{y - c})
 = h_P(a) + h_P(b) + h_C(-a - b),
\]
which proves \eqref{eq:support_difference}.

Finally, $(\lambda u, -\lambda u) \in D_C(P)$ if and only if, for
some $c \in C$, both $c + \lambda u$ and $c - \lambda u$ belong to $P$.
If $\lambda \ge 1$, the convexity of $P$ then gives
$c + \brackets{-u, u} \subset P$.  The converse follows by taking
$\lambda = 1$.  This proves \eqref{eq:ray}.
\end{proof}

To rewrite this inclusion in terms of dual inequalities, define the
$2d$--dimensional affine space
\[
 E_u := \braces{(p, q, t) \in \R^d \times \R^d \times \R :
                    \iprod{q}{u} = 1}.
\]

We shall use the following functions.  They are allowed to take the
value $+\infty$.

\begin{dfn}
\label{dfn:dual_function}
For non-empty sets $P, C \subset \R^d$, vectors $p, q \in \R^d$, and
$t \in \R$, define
\[
 \Phi_P^C(p, q, t)
 := h_C(-p) + \sup_{x \in P}
 \braces{\iprod{p}{x} + \abs{\iprod{q}{x} - t}}.
\]
For $x \in \R^d$, its one-point specialization is
\[
 \varphi_x^C(p, q, t)
 := \Phi_{\braces{x}}^C(p, q, t)
 = h_C(-p) + \iprod{p}{x} + \abs{\iprod{q}{x} - t}.
\]
If $P$ is bounded, also put
\begin{equation}
 t_P(p, q) := \frac{h_P(p + q)-h_P(p - q)}{2}.
 \label{eq:explicit_t}
\end{equation}
\end{dfn}

For a non-empty set $C \subset \R^d$ and $x \in \R^d$, define
\begin{equation}
 F_x^C
 := \braces{(p, q, t) \in E_u : \varphi_x^C(p, q, t) < 1}.
 \label{eq:point_dual_set}
\end{equation}

The following lemma records the elementary properties of $\Phi_P^C$
used in the separation argument.

\begin{lem}
\label{lem:dual_function}
Let $P, C \subset \R^d$ be non-empty compact sets.  The function
$\Phi_P^C$ is finite, continuous, and convex.  Moreover,
\begin{equation}
 \Phi_P^C(p, q, t)
 = h_C(-p) +
 \max\braces{h_P(p + q)-t, h_P(p - q)+t}
 \label{eq:Phi_support}
\end{equation}
for all $p, q \in \R^d$ and $t \in \R$.  The function
$t \longmapsto \Phi_P^C(p, q, t)$ has the unique minimizer
$t_P(p, q)$.  If
\[
 a := \frac{p + q}{2},
 \qquad
 b := \frac{p - q}{2},
\]
then
\begin{equation}
 h_{D_C(P)}(a, b)
 = \Phi_P^C(p, q, t_P(p, q))
 \le \Phi_P^C(p, q, t).
 \label{eq:Phi_geometric}
\end{equation}
\end{lem}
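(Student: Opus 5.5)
The plan is to verify each assertion directly from the definitions. I begin with \eqref{eq:Phi_support}. The absolute value splits as $\abs{s} = \max\braces{s,-s}$. Taking the supremum over $x \in P$ of a maximum of two functions gives the maximum of the two suprema, so
\[
 \sup_{x \in P}\braces{\iprod{p}{x} + \abs{\iprod{q}{x} - t}}
 = \max\braces{h_P(p+q) - t,\ h_P(p-q) + t}.
\]
Adding $h_C(-p)$ yields \eqref{eq:Phi_support}.

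Finiteness, continuity and convexity then follow from this formula. For a non-empty compact set $S$, the support function $h_S$ is finite, convex and Lipschitz, being a maximum of linear functions with uniformly bounded gradients. Each of $h_C(-p)$, $h_P(p+q) - t$ and $h_P(p-q) + t$ is therefore a finite, continuous, convex function of $(p,q,t)$, since each is a support function composed with a linear map, plus an affine term. A maximum of two such functions, plus a third, keeps all three properties.

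For the minimizer in $t$, I fix $p$ and $q$ and put $\alpha := h_P(p+q)$ and $\beta := h_P(p-q)$. The function $t \mapsto \max\braces{\alpha - t, \beta + t}$ is the maximum of a strictly decreasing and a strictly increasing affine function. Such a function is strictly decreasing to the left of the crossing point $t = (\alpha - \beta)/2 = t_P(p,q)$ and strictly increasing to its right. This point is therefore the unique minimizer, and the minimum value is $(\alpha+\beta)/2$. Hence $\Phi_P^C(p,q,t_P(p,q)) = h_C(-p) + \tfrac12\parenth{h_P(p+q) + h_P(p-q)} \le \Phi_P^C(p,q,t)$ for every $t$.

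Finally I match this with \eqref{eq:support_difference}. With $a = (p+q)/2$ and $b = (p-q)/2$ we have $a + b = p$, so $h_C(-a-b) = h_C(-p)$. Positive homogeneity gives $h_P(a) = \tfrac12 h_P(p+q)$ and $h_P(b) = \tfrac12 h_P(p-q)$. By \Href{Lemma}{lem:product_encoding}, $h_{D_C(P)}(a,b)$ equals exactly the minimum value computed above, which proves \eqref{eq:Phi_geometric}. I expect no real obstacle here. The only point needing care is the exchange of the supremum with the maximum in the first step, which is valid because the supremum of a maximum is the maximum of the suprema. Compactness of $P$ ensures that all support functions involved are finite, so no terms with value $+\infty$ appear.
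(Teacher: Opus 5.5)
Your proof is correct and follows essentially the same route as the paper. Both arguments split the absolute value, exchange the supremum with the maximum, minimize the maximum of two affine functions of $t$ at their crossing point, and match the minimum with $h_{D_C(P)}(a,b)$ using $a+b=p$ and positive homogeneity, which the paper leaves implicit. One small point: \Href{Lemma}{lem:product_encoding} is stated for convex $P$ and $C$, whereas here they are only compact. It is cleaner to read off $h_{D_C(P)}(a,b)=h_P(a)+h_P(b)+h_C(-a-b)$ directly from the definition of $D_C(P)$, as the paper does, since that identity needs no convexity.
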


\begin{proof}
Using $\abs{s} = \max\braces{s, -s}$ and interchanging the supremum
over $P$ with the maximum over the two signs gives
\eqref{eq:Phi_support}.  The right-hand side is finite, continuous, and
convex.

The maximum of the two affine functions $A - t$ and $B + t$ is uniquely
minimized when $A - t = B + t$.  With $A = h_P(p + q)$ and
$B = h_P(p - q)$, this gives \eqref{eq:explicit_t} and
\[
 \Phi_P^C(p, q, t_P(p, q))
 = h_C(-p) + \frac{h_P(p + q)+h_P(p - q)}{2}.
\]
By the definition of $D_C(P)$,
\[
 h_{D_C(P)}(a, b)
 = h_P(a) + h_P(b) + h_C(-a - b).
\]
Since $a + b = p$, this proves \eqref{eq:Phi_geometric}.
\end{proof}

The one-point sets in \eqref{eq:point_dual_set} have the properties
required in the topological argument.

\begin{lem}
\label{lem:dual_sets}
Let $C \subset \R^d$ be a non-empty compact set.  For every
$x \in \R^d$, the set $F_x^C$ is non-empty, open, and convex in $E_u$.
\end{lem}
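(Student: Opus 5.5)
The plan is to verify the three properties directly from the formula for $\varphi_x^C$, viewing it as a function on the affine space $E_u$.

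\emph{Convexity and openness.} By \Href{Lemma}{lem:dual_function} applied with $P=\braces{x}$, the function $\varphi_x^C=\Phi_{\braces{x}}^C$ is finite, continuous and convex on $\R^d\times\R^d\times\R$. One can also see this by hand: $h_C(-p)$ is a supremum of linear functions of $p$, finite since $C$ is compact, hence convex and continuous (indeed Lipschitz). The term $\iprod{p}{x}$ is linear, and $\abs{\iprod{q}{x}-t}$ is the absolute value of an affine function. Restricting to the affine subspace $E_u$ preserves continuity and convexity. Then $F_x^C$ is a strict sublevel set of a continuous convex function on $E_u$, so it is open in $E_u$ and convex.

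\emph{Non-emptiness.} This requires exhibiting a point of $E_u$ where $\varphi_x^C<1$. We take $p=0$, which makes $h_C(0)=0$ and $\iprod{p}{x}=0$. Next we take $q=u$, which satisfies $\iprod{q}{u}=1$. Finally we take $t=\iprod{u}{x}$, so that $\abs{\iprod{q}{x}-t}=0$. Then $(0,u,\iprod{u}{x})\in E_u$ and $\varphi_x^C$ vanishes there, so this point lies in $F_x^C$.

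There is no real obstacle here. The only point needing care is that $h_C$ is finite and continuous, which uses compactness (boundedness) of $C$, and that openness and convexity are taken relative to $E_u$ rather than the ambient space.
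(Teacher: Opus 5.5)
Your proposal is correct and follows essentially the same route as the paper: convexity and continuity of $\varphi_x^C=\Phi_{\braces{x}}^C$ from \Href{Lemma}{lem:dual_function} give openness and convexity of the strict sublevel set in $E_u$. Non-emptiness then comes from the point $\parenth{0,u,\iprod{u}{x}}\in E_u$, where $\varphi_x^C$ vanishes.
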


\begin{proof}
The function $\varphi_x^C = \Phi_{\braces{x}}^C$ is finite, continuous,
and convex by \Href{Lemma}{lem:dual_function}.  Hence, $F_x^C$ is open
and convex.  Since $u$ is a unit vector,
\[
 \parenth{0, u, \iprod{u}{x}} \in E_u
 \qquad\text{and}\qquad
 \varphi_x^C(0, u, \iprod{u}{x}) = 0.
\]
Thus, $F_x^C$ is non-empty.
\end{proof}

The resulting finite criterion is the following.

\begin{cor}[Finite duality]
\label{cor:finite_duality}
Let $I \subset \R^d$ be a non-empty finite set, and let
$C \subset \R^d$ be non-empty, compact, and convex.  Then
\[
 \text{there is }c \in C\text{ with }
 c + \brackets{-u, u} \subset \conv I
 \quad\Longleftrightarrow\quad
 \bigcap_{x \in I}F_x^C = \emptyset.
\]
\end{cor}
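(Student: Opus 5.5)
The plan is to reduce both sides of the equivalence to a statement about the compact convex set $D_C(P)$ with $P := \conv I$, and then use strict separation from the ray $\braces{\lambda v : \lambda \ge 1}$, where $v = (u,-u)$. By \eqref{eq:ray} in \Href{Lemma}{lem:product_encoding}, the left-hand side is equivalent to $D_C(P)$ meeting this ray. So it suffices to show that $\bigcap_{x\in I}F_x^C \ne \emptyset$ if and only if $D_C(P)$ misses the ray.

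\emph{Step 1: from points to the hull.} We show $\Phi_P^C = \max_{x \in I}\varphi_x^C$ on $E_u$. For fixed $(p,q,t)$, the map $x \mapsto \iprod{p}{x} + \abs{\iprod{q}{x}-t}$ is convex. Hence its supremum over $P = \conv I$ is attained at a point of $I$. Consequently, a point $(p,q,t)\in E_u$ lies in $\bigcap_{x\in I}F_x^C$ if and only if $\Phi_P^C(p,q,t) < 1$.

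\emph{Step 2: a point with $\Phi_P^C<1$ gives disjointness.} Suppose $(p,q,t)\in E_u$ satisfies $\Phi_P^C(p,q,t)<1$. Put $a = (p+q)/2$ and $b = (p-q)/2$. By \eqref{eq:Phi_geometric}, $h_{D_C(P)}(a,b) \le \Phi_P^C(p,q,t) < 1$. On the ray we have
\[
\iprod{(a,b)}{\lambda v} = \lambda\iprod{a-b}{u} = \lambda\iprod{q}{u} = \lambda \ge 1 .
\]
The linear functional $(a,b)$ is therefore $<1$ on $D_C(P)$ and $\ge 1$ on the ray, so the two sets are disjoint.

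\emph{Step 3: disjointness gives a point with $\Phi_P^C<1$.} Suppose $D_C(P)$ misses the ray. Both sets are closed and convex, and $D_C(P)$ is compact, so they can be strictly separated. This gives $w=(a,b)$ and $\gamma$ with $h_{D_C(P)}(w) < \gamma \le \inf_{\lambda\ge1}\lambda\iprod{q}{u}$, where $q:=a-b$ and $p:=a+b$. Finiteness of the infimum forces $\iprod{q}{u}\ge 0$.

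\emph{Case $\iprod{q}{u}>0$.} Rescale $w$ so that $\iprod{q}{u}=1$. Then $(p,q,t_P(p,q))\in E_u$, and by \eqref{eq:Phi_geometric} we get $\Phi_P^C(p,q,t_P(p,q)) = h_{D_C(P)}(a,b) < 1$.

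\emph{Case $\iprod{q}{u}=0$.} This is the degenerate case I expect to need care. Here $h_{D_C(P)}(w)<0$. Replace $w$ by $w_\varepsilon := w+\tfrac{\varepsilon}{2}(u,-u)$, which changes $q$ to $q+\varepsilon u$ and leaves $p$ unchanged. With $R := h_{D_C(P)}(\tfrac12(u,-u))$, finite by compactness, sublinearity gives $h_{D_C(P)}(w_\varepsilon)\le h_{D_C(P)}(w)+\varepsilon R$. This is $<\varepsilon$ once $\varepsilon(R-1)<-h_{D_C(P)}(w)$, which holds for all small $\varepsilon>0$. Dividing $w_\varepsilon$ by $\varepsilon$ reduces to the first case.

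Combining Steps 1–3 with \eqref{eq:ray} proves the corollary.
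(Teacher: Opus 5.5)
Your proof is correct and follows the paper's argument essentially step for step. Both use the identity $\Phi_P^C=\max_{x\in I}\varphi_x^C$, then strict separation of $D_C(P)$ from the ray $\braces{\lambda v : \lambda \ge 1}$, with a perturbation by a small multiple of $v$ in the degenerate case $\iprod{q}{u}=0$, and finally the converse via \eqref{eq:Phi_geometric}. Your explicit bound $h_{D_C(P)}(w_\varepsilon)\le h_{D_C(P)}(w)+\varepsilon R$ usefully justifies the step that the paper states only as ``for a sufficiently small $\delta>0$.''
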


\begin{proof}
Put $P := \conv I$.  For fixed $p, q, t$, the function
\[
 x \longmapsto \iprod{p}{x} + \abs{\iprod{q}{x} - t}
\]
is convex.  Therefore,
\begin{equation}
 \Phi_P^C(p, q, t)
 = \max_{x \in I}\varphi_x^C(p, q, t),
 \qquad
 \bigcap_{x \in I}F_x^C
 = \braces{(p, q, t) \in E_u : \Phi_P^C(p, q, t) < 1}.
 \label{eq:finite_intersection}
\end{equation}

We prove the equivalent assertion that the required translate does not
exist if and only if the intersection in \eqref{eq:finite_intersection}
is non-empty.  Suppose first that the translate does not exist.  By
\Href{Lemma}{lem:product_encoding}, the compact convex set $D_C(P)$ is
disjoint from the ray $\braces{\lambda v : \lambda \ge 1}$.  Strict
separation gives $g = (a, b)$ such that
\[
 h_{D_C(P)}(g)
 < \inf_{\lambda \ge 1}\lambda\iprod{g}{v}.
\]
Thus, $\iprod{g}{v} \ge 0$.  If $\iprod{g}{v} = 0$, then
$h_{D_C(P)}(g) < 0$.  Replacing $g$ by $g + \delta v$ for a sufficiently
small $\delta > 0$, and continuing to denote the new vector by $g$,
makes $\iprod{g}{v}$ positive while preserving
\[
 h_{D_C(P)}(g) < \iprod{g}{v}.
\]
After a  rescaling, we may therefore assume that
\[
 \iprod{a - b}{u} = 1,
 \qquad
 h_{D_C(P)}(a, b) < 1.
\]
Set $p := a + b$ and $q := a - b$.  Then $\iprod{q}{u} = 1$, and
\Href{Lemma}{lem:dual_function}, with the explicit choice
$t = t_P(p, q)$, gives
\[
 \Phi_P^C(p, q, t_P(p, q))
 = h_{D_C(P)}(a, b) < 1.
\]
Thus, the intersection in \eqref{eq:finite_intersection} is non-empty.

Conversely, suppose that $(p, q, t)$ belongs to this intersection, and
put
\[
 a := \frac{p + q}{2},
 \qquad
 b := \frac{p - q}{2}.
\]
By \Href{Lemma}{lem:dual_function},
\[
 h_{D_C(P)}(a, b)
 = \Phi_P^C(p, q, t_P(p, q))
 \le \Phi_P^C(p, q, t) < 1.
\]
On the other hand, $\iprod{(a, b)}{v} = \iprod{q}{u} = 1$, and hence
\[
 \iprod{(a, b)}{\lambda v} = \lambda \ge 1
 \qquad
 \text{for every } \lambda \ge 1.
\]
Thus, $D_C(P)$ is disjoint from the ray.  By
\Href{Lemma}{lem:product_encoding}, the required translate does not
exist.
\end{proof}

\section{The escape property}
\label{sec:escape}
To apply \Href{Lemma}{lem:colorful_hole}, we need the following escape
property of the sets $F_x^C$.

\begin{lem}[Escape lemma]
\label{lem:moving_escape}
Let $I \subset \R^d$ be finite, and let $C \subset \R^d$ be a
non-empty compact set.  Then every connected component of
\[
 E_u \setminus \bigcup_{x \in I}F_x^C
 = \braces{(p, q, t) \in E_u :
  \varphi_x^C(p, q, t) \ge 1 \text{ for every } x \in I}
\]
is unbounded.
\end{lem}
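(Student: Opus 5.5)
The plan is to show that every point $z_0 = (p_0, q_0, t_0)$ of the complement
\[
 Z := \braces{(p,q,t) \in E_u : \varphi_x^C(p,q,t) \ge 1 \text{ for every } x \in I}
\]
can be joined, inside $Z$, to points of arbitrarily large norm. If this holds, the connected component of $z_0$ is unbounded. If $I = \emptyset$ then $Z = E_u$ and there is nothing to prove, so assume $I \ne \emptyset$.

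The tool is convexity. Each $\varphi_x^C$ is convex on $E_u$ by \Href{Lemma}{lem:dual_function}. Let $D$ be a direction in the linear space parallel to $E_u$, that is,
\[
 D = (p', q', t') \quad\text{with}\quad \iprod{q'}{u} = 0 .
\]
Suppose the one-sided directional derivative of every $\varphi_x^C$, $x \in I$, at $z_0$ in the direction $D$ is non-negative. Then each function $s \mapsto \varphi_x^C(z_0 + sD)$ is convex with non-negative right derivative at $s=0$, hence non-decreasing on $[0,\infty)$. So the whole ray $\braces{z_0 + sD : s \ge 0}$ stays in $Z$ and leaves every bounded set. The lemma is therefore reduced to a pointwise statement: at every $z_0 \in Z$ there is a nonzero admissible $D$ that is simultaneously non-decreasing for the finitely many convex functions $\varphi_x^C$. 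If a single ray cannot be found everywhere, I will allow polygonal paths instead. For instance, one can first move to a point where such a ray exists, provided each segment is shown to stay in $Z$ by the same convexity argument. One could also restrict attention to the indices $x$ with $\varphi_x^C(z_0)$ close to $1$, since the others stay $\ge 1$ on a short initial piece by continuity.

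To find $D$, I will use the explicit structure
\[
 \varphi_x^C = h_C(-p) + \iprod{p}{x} + \abs{\iprod{q}{x} - t}.
\]
Write $b_x := \iprod{q_0}{x} - t_0$. The first term $h_C(-p)$ is positively homogeneous, and the last term is the absolute value of an affine function of $(q,t)$. Varying only $(q,t)$ along a direction $(0, w, \tau)$ with $w \perp u$ changes $b_x$ at rate $\iprod{w}{x} - \tau$. The derivative of $\abs{\cdot}$ is then non-negative precisely when the affine function $x \mapsto \iprod{w}{x} - \tau$ has the same sign as $b_x$ at every $x \in I$ where $b_x \neq 0$. This condition holds at a point where $x$ is split by the hyperplane $\iprod{q_0}{x}=t_0$, which is fine. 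So the key sub-step is to produce an affine function with gradient orthogonal to $u$ that reproduces the sign pattern of $x \mapsto \iprod{q_0}{x} - t_0$ on $I$. When such a function does not exist, I will compensate by also moving $p$. Moving $p$ by a multiple of $p_0$ scales $h_C(-p) + \iprod{p}{x}$ positively. The other natural candidate is the rescaling
\[
 z_s = (1+s)\,z_0 - s\,(0,u,0),
\]
which preserves $E_u$: under it the $p$-part and the term $t$ scale by $1+s$, while the absolute-value term becomes
\[
 \abs{(1+s)\,b_x - s\iprod{u}{x}} .
\]

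I expect the main obstacle to be exactly this simultaneous choice of direction. The difficulty is that the terms $h_C(-p)+\iprod{p}{x}$ may be negative for some $x$, so scaling $p$ alone can decrease $\varphi_x^C$. In addition, the constraint $\iprod{q}{u}=1$ forbids simply scaling $q$. My first attempt will be to combine the rescaling $z_s$ above with a translation in $t$ and a correction of $q$ by a multiple of $q_0-u$, which is orthogonal to $u$. I will then check, case by case according to the signs of $b_x$ and of $h_C(-p_0)+\iprod{p_0}{x}$, that the right derivatives are non-negative. If this fails, I will fall back on a two-stage path: first push $\abs{t}$ far beyond $\max_{x\in I}\abs{\iprod{q}{x}}$, which is feasible along the segment only where all the intervals $\braces{t : \varphi_x^C < 1}$ lie on one side of $t_0$, and otherwise use the $(q,t)$-move above to reach such a configuration.
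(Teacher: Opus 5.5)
Your proposal has a genuine gap. The reduction to a single ray is false, and the fallback meant to replace it is not carried out. From a point of $Z$ there need not be \emph{any} ray contained in $Z$, let alone one along which every $\varphi_x^C$ is non-decreasing.

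Take $d=2$, $u=e_1$, $C=\braces{0}$ (so $h_C(-p)=0$), $I=\braces{(\pm1,\pm1),(\pm3,0)}$ and $z_0=(0,u,0)$. Then $\varphi_x(z_0)=\abs{x_1}\ge 1$, so $z_0\in Z$, and the four points $(\pm1,\pm1)$ are active. A ray from $z_0$ inside $Z$ needs non-negative right derivatives of these four functions. Since $b_x=x_1\neq 0$, these derivatives are linear in $D=(p',q',t')$ and sum to zero, so all of them vanish. This forces $(p',q',t')=(\gamma e_1,0,\gamma)$. For $\gamma>0$ one gets $\varphi_{(-3,0)}=3-2s$, and for $\gamma<0$ one gets $\varphi_{(3,0)}=3-2s$; both drop below $1$ for $s>1$.

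Your candidate moves are all degenerate at this point:
\begin{enumerate}[(i)]
\item $p_0=0$, so moving $p$ by a multiple of $p_0$ does nothing;
\item $q_0-u=0$, so the correction of $q$ is trivial;
\item the homothety $z_s$ fixes $z_0=(0,u,0)$;
\item no affine function with gradient orthogonal to $e_1$ reproduces the sign of $x_1$ on $(\pm1,\pm1)$.
\end{enumerate}
The component of $z_0$ is unbounded only through a broken path. You go to $w=(e_1,e_1,1)$, where $\varphi_{(-3,0)}$ reaches $1$, then turn and follow $w+s(0,0,1)$.

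So the whole content of the lemma is that such a turning point exists and can be reached inside $Z$. Your last sentence assumes this (``otherwise use the $(q,t)$-move above to reach such a configuration'') without justification. Note also that $Z$ is not convex, so convexity of the individual $\varphi_x^C$ does not keep a segment between two points of $Z$ inside $Z$.

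The paper supplies the missing structure in three steps.
\begin{enumerate}[(a)]
\item It fixes the center. For $z=(p,q,t)$ choose $c_z\in\operatorname{argmin}_{c\in C}\iprod{p}{c}$. Then $\varphi_x^C(z)=\varphi_x^{\braces{c_z}}(z)$ and $F_x^C\subset F_x^{\braces{c_z}}$, so it suffices to escape within the fixed-center complement.
\item The fixed-center complement is a finite union of convex sign polyhedra $\mathcal P_c(A,B)$, so segments inside one piece are safe. If the piece containing $z$ is unbounded, you follow a recession ray.
\item If the piece is bounded, a Farkas alternative shows that $J_+^c(A)\cap J_-^c(B)$ meets $[1,\infty)$ but not $(-\infty,1)$. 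Hence one of these intervals lies in $[1,\infty)$, which makes $\mathcal P_c(A,I)$ or $\mathcal P_c(I,B)$ non-empty. A point $w$ there is joined to $z$ inside $\mathcal P_c(A,B)$. The vertical ray $w\pm s(0,0,1)$ then stays in $\mathcal P_c(\emptyset,I)$ or $\mathcal P_c(I,\emptyset)$.
\end{enumerate}

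Your ``push $\abs{t}$ to infinity'' step is exactly this final move. Without the fixed-center reduction and the interval alternative, though, you have no argument that a point from which it works is always reachable inside $Z$.
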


We will write $z = (p, q, t)$ for a point of $E_u$.

The proof consists of two steps.  We first treat a fixed center $c$;
in this case the relevant complements are finite unions of polyhedra.
We then choose, for each $z$, a supporting point $c_z$ of $C$ and
reduce the general case to the fixed-center one.

Before proving the escape lemma, we isolate the polyhedral Farkas-like alternative
used in the first step.

\begin{dfn}
\label{dfn:Q}
For a non-zero vector $v \in \R^n$, let
\[
 H_v := \braces{a \in \R^n : \iprod{a}{v} = 1}.
\]
If $K \subset \R^n$ is non-empty, define
\[
 Q(K, v)
 := \braces{a \in H_v : \iprod{a}{y} \ge 1
  \text{ for every } y \in K}.
\]
\end{dfn}

\begin{lem}[Polyhedral Farkas alternative]
\label{lem:farkas_alternative}
Let $K \subset \R^n$ be a non-empty polytope, and let
$v \in \R^n$ be non-zero.  Then the following statements hold:
\begin{enumerate}[(a)]
\item $Q(K, v) = \emptyset$ if and only if
$K \cap \braces{\lambda v : \lambda < 1} \ne \emptyset$;
\item if $Q(K, v)$ is non-empty and bounded, then
$K \cap \braces{\lambda v : \lambda \ge 1} \ne \emptyset$.
\end{enumerate}
\end{lem}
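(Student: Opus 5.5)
The plan is to pass to finitely many linear inequalities, using the fact that $K$ is a polytope. Let $y_1, \dots, y_N$ be the vertices of $K$. Every $y \in K$ is a convex combination of the $y_j$, so $\iprod{a}{y} \ge 1$ holds for all $y \in K$ if and only if it holds for every vertex. Hence
\[
 Q(K, v) = \braces{a \in \R^n : \iprod{a}{v} = 1,\ \iprod{a}{y_j} \ge 1 \text{ for } j \in \brackets{N}},
\]
which is a polyhedron. Part (a) will then follow from the inhomogeneous Farkas lemma. Part (b) will follow from a separation argument that produces a nonzero recession direction of $Q(K,v)$.

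For (a), the easy direction comes first. Suppose $\lambda v \in K$ for some $\lambda < 1$. Then every $a \in H_v$ satisfies $\iprod{a}{\lambda v} = \lambda < 1$, so $Q(K, v) = \emptyset$.

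For the converse, write the system as the inequalities $\iprod{a}{y_j} \ge 1$, $\iprod{a}{v} \ge 1$ and $\iprod{a}{-v} \ge -1$. By Farkas' lemma, the system is infeasible if and only if there are multipliers $\mu_j, \alpha, \beta \ge 0$ such that
\[
 \sum_j \mu_j y_j + (\alpha - \beta) v = 0
 \qquad\text{and}\qquad
 \sum_j \mu_j + \alpha - \beta > 0.
\]
Put $\nu := \beta - \alpha$ and $s := \sum_j \mu_j$. Then the conditions read $\sum_j \mu_j y_j = \nu v$ and $s > \nu$. We split into two cases.
\begin{enumerate}[(1)]
\item If $s = 0$, then $\nu v = 0$, so $\nu = 0$ because $v \ne 0$. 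This contradicts $s > \nu$.
\item If $s > 0$, then $y := s^{-1}\sum_j \mu_j y_j$ belongs to $K$ and equals $(\nu/s)v$ with $\nu/s < 1$.
\end{enumerate}
So infeasibility forces $K \cap \braces{\lambda v : \lambda < 1} \ne \emptyset$, which proves (a).

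For (b), assume $Q(K, v)$ is non-empty and bounded, and suppose, for contradiction, that $K$ misses the ray $\braces{\lambda v : \lambda \ge 1}$. By (a), non-emptiness of $Q(K,v)$ shows that $K$ also misses $\braces{\lambda v : \lambda < 1}$. Hence the compact convex set $K$ is disjoint from the whole line $\R v$.

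Strict separation of a compact convex set from a closed linear subspace gives $g$ and a constant $\gamma$ such that $\iprod{g}{y} > \gamma \ge \iprod{g}{\lambda v}$ for all $y \in K$ and all $\lambda \in \R$. Since $\iprod{g}{\lambda v}$ is bounded above on the whole line, $\iprod{g}{v} = 0$, and we may take $\gamma = 0$. Thus $\iprod{g}{y} > 0$ on $K$, and in particular $g \ne 0$.

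Take any $a \in Q(K, v)$. Then for every $t \ge 0$ we have $\iprod{a + tg}{v} = 1$ and $\iprod{a + tg}{y} \ge \iprod{a}{y} \ge 1$ for all $y \in K$. So $a + tg \in Q(K,v)$ for all $t \ge 0$, and $Q(K, v)$ is unbounded. This contradiction proves (b).

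I expect the main care to be needed in the converse of (a). Because $\braces{\lambda v : \lambda < 1}$ is not closed, a direct separation argument could break down when $v \in K$. The reduction to finitely many vertex inequalities and the use of Farkas' lemma is there to avoid this issue.
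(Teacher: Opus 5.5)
Your proof is correct and follows essentially the same route as the paper: part (a) is Farkas' lemma applied to the finitely many vertex inequalities. The only difference there is that you encode $\iprod{a}{v}=1$ as two inequalities, whereas the paper parametrizes $H_v$ by $v^\perp$. Part (b) is the same separation argument producing a nonzero recession direction $g \perp v$ of $Q(K,v)$. You argue it by contradiction, using (a) to reduce to $K$ missing the whole line $\R v$; the paper argues directly that some convex combination of vertices lies on $\R v$ and pairs it with $a \in Q(K,v)$ to get $\lambda \ge 1$.
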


\begin{proof}
Write $K = \conv\braces{z_1, \dots, z_N}$.  For this fixed choice of
vertices, define the affine evaluation map
\[
 T : H_v \longrightarrow \R^N,
 \qquad
 T(a) := \parenth{\iprod{a}{z_i} - 1}_{i \in \brackets{N}}.
\]
Also, let
\[
 \R_+^N := \braces{y \in \R^N : y_i \ge 0
                    \text{ for every } i \in \brackets{N}}.
\]
Since a linear functional attains its minimum over $K$ at a vertex,
\[
 Q(K, v) = T^{-1} \! \parenth{\R_{+}^{N}}.
\]

Put
\[
 a_0 := \frac{v}{\enorm{v}^2} \in H_v
\]
and define the linear map
\[
 L : v^\perp \longrightarrow \R^N,
 \qquad
 L(h) := \parenth{\iprod{h}{z_i}}_{i \in \brackets{N}}.
\]
Clearly, every $a \in H_v$
has a unique representation $a = a_0 + h$, $h \in v^\perp$, and
\[
 T(a_0 + h) = T(a_0) + L(h).
\]
Thus, $Q(K, v) = \emptyset$ means precisely that the system
\[
 T(a_0) + L(h) \in \R_+^N,
 \qquad h \in v^\perp,
\]
is infeasible.  After choosing a basis in $v^\perp$, the
 Farkas alternative
\cite[Section~5.8.3, p.~263]{boyd2004convex} gives a vector
$\theta = \parenth{\theta_i}_{i \in \brackets{N}}
\in \R_+^N \setminus \braces{0}$ such that
\begin{equation}
 \iprod{\theta}{L(h)} = 0
 \quad\text{for every } h \in v^\perp,
 \qquad
 \iprod{\theta}{T(a_0)} < 0.
 \label{eq:farkas_certificate}
\end{equation}
The first relation in \eqref{eq:farkas_certificate} says that
\[
 \sum_{i \in \brackets{N}}\theta_i z_i = \gamma v
\]
for some scalar $\gamma$.  Since $a_0 \in H_v$, the second relation
gives
\[
 \gamma - \sum_{i \in \brackets{N}}\theta_i
 = \iprod{a_0}{\sum_{i \in \brackets{N}}\theta_i z_i}
   - \sum_{i \in \brackets{N}}\theta_i
 < 0.
\]
Set $s := \sum_{i \in \brackets{N}}\theta_i$.  Then $s > 0$ and
\[
 \frac{\gamma}{s}v
 = \sum_{i \in \brackets{N}}\frac{\theta_i}{s}z_i \in K,
 \qquad
 \frac{\gamma}{s} < 1.
\]
Conversely, suppose that $\lambda v \in K$ for some $\lambda < 1$.
If $a \in Q(K, v)$, then taking the inner product with $a$ in a convex
representation of $\lambda v$ by the vertices of $K$ gives
$\lambda \ge 1$, a contradiction.  This proves \emph{(a)}.

Suppose now that $Q(K, v)$ is non-empty and bounded.  Its recession
cone is
\[
 \braces{h \in v^\perp : \iprod{h}{z_i} \ge 0
                     \text{ for every } i \in \brackets{N}}.
\]
Let $\bar z_i$ be the orthogonal projection of $z_i$ onto $v^\perp$.
The recession cone is trivial because $Q(K, v)$ is bounded.  If $0$
did not lie in the interior, taken in $v^\perp$, of the convex hull of
the $\bar z_i$, the separating hyperplane theorem in $v^\perp$ would
give a non-zero $h \in v^\perp$ such that
\[
 \iprod{h}{\bar z_i} \ge 0
 \qquad\text{for every } i \in \brackets{N}.
\]
That is, it would be a non-zero vector in the recession cone, a contradiction.
Thus, $0$ lies in the interior of
$\conv\braces{\bar z_1, \dots, \bar z_N}$ in $v^\perp$.
Consequently, there are $\theta_i \ge 0$ with
$\sum_{i \in \brackets{N}}\theta_i = 1$ and a scalar $\lambda$ such that
\[
 \sum_{i \in \brackets{N}}\theta_i z_i = \lambda v.
\]
For $a \in Q(K, v)$, taking the inner product with $a$ gives
\[
 \lambda
 = \sum_{i \in \brackets{N}}\theta_i\iprod{a}{z_i} \ge 1.
\]
This proves \emph{(b)}.
\end{proof}

We next introduce the polyhedra that arise when the center is fixed.
For $x, c \in \R^d$, define the affine functions on $E_u$
\[
 \ell_{x,c}^+(p, q, t)
 := \iprod{p}{x - c} + \iprod{q}{x} - t,
 \qquad
 \ell_{x,c}^-(p, q, t)
 := \iprod{p}{x - c} - \iprod{q}{x} + t.
\]
Thus,
\[
 \varphi_x^{\braces{c}}(z)
 = \max\braces{\ell_{x,c}^+(z), \ell_{x,c}^-(z)}.
\]
For finite sets $A, B \subset \R^d$, define the sign polyhedron
\[
 \mathcal P_c(A, B)
 := \braces{z \in E_u :
 \ell_{x,c}^+(z) \ge 1 \text{ for every } x \in A,
 \ \ell_{x,c}^-(z) \ge 1 \text{ for every } x \in B}.
\]
The sets $A$ and $B$ are allowed to intersect.  Finally, put
\[
 J_+^c(A) := \braces{\lambda \in \R :
 c + \lambda u \in \conv A},
 \qquad
 J_-^c(B) := \braces{\lambda \in \R :
 c - \lambda u \in \conv B},
\]
where $\conv\emptyset := \emptyset$.  Each of the sets $J_+^c(A)$ and
$J_-^c(B)$ is either empty or a compact interval.

\begin{lem}[Sign-polyhedron alternative]
\label{lem:sign_polyhedron}
Let $A, B \subset \R^d$ be finite, and let $c \in \R^d$.  Then
\begin{equation}
 \mathcal P_c(A, B) = \emptyset
 \quad\Longleftrightarrow\quad
 J_+^c(A) \cap J_-^c(B) \cap (-\infty, 1) \ne \emptyset,
 \label{eq:empty_alternative}
\end{equation}
Moreover,
\begin{equation}
 \mathcal P_c(A, B) \text{ is non-empty and bounded}
 \quad\Longrightarrow\quad
 J_+^c(A) \cap J_-^c(B) \cap [1, \infty) \ne \emptyset.
 \label{eq:bounded_alternative}
\end{equation}
If $A = \emptyset$ or $B = \emptyset$, then
$\mathcal P_c(A, B)$ is non-empty and unbounded.
\end{lem}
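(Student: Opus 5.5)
The plan is to reduce the statement to the Polyhedral Farkas alternative (\Href{Lemma}{lem:farkas_alternative}) by a suitable linear reparametrization of $E_u$. Write $z = (p,q,t)$ and consider the linear map $\Lambda : \R^d\times\R^d\times\R \to \R^{2d+1}$ of the ambient space. For $x \in A$ put $y_x^+ := (x - c, x, -1)$, and for $x \in B$ put $y_x^- := (x - c, -x, 1)$. Then $\ell_{x,c}^{\pm}(z) = \iprod{z}{y_x^\pm}$ for $z$ in the ambient space. Also put $w := (0,u,0)$, so that $E_u = H_w$. Let $K := \conv\parenth{\braces{y_x^+ : x\in A}\cup\braces{y_x^- : x\in B}}$. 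Then $\mathcal P_c(A,B) = Q(K,w)$ whenever $A\cup B \ne \emptyset$. If $A = B = \emptyset$, then $\mathcal P_c = E_u$, which is nonempty and unbounded, and $J_\pm$ are empty, so that case is checked directly.

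The key computation describes $K \cap \braces{\lambda w}$. A point of $K$ has the form $\sum_{x\in A}\alpha_x y_x^+ + \sum_{x\in B}\beta_x y_x^-$ with total weight one. Write $\alpha := \sum\alpha_x$ and $\beta := \sum\beta_x$. The last coordinate vanishes exactly when $\alpha = \beta = 1/2$. Then the first block gives $\frac12(a' - c) + \frac12(b' - c) = 0$, where $a' \in \conv A$ and $b' \in \conv B$ are the normalized averages. The second block gives $\frac12(a' - b') = \lambda u$. Hence $a' = c + \lambda u$ and $b' = c - \lambda u$. Conversely, any such pair $a', b'$ produces $\lambda w \in K$. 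Therefore
\[
K \cap \braces{\lambda w : \lambda \in \R} = \braces{\lambda w : \lambda \in J_+^c(A)\cap J_-^c(B)}.
\]
Part (a) of \Href{Lemma}{lem:farkas_alternative}, applied with $v = w$, then gives \eqref{eq:empty_alternative} immediately, and part (b) gives \eqref{eq:bounded_alternative}. The only care needed is that $w \ne 0$ and that $K$ is a nonempty polytope when $A \cup B \ne \emptyset$.

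For the last claim, suppose $B = \emptyset$ with $A \neq \emptyset$. Then $J_-^c(B) = \emptyset$, so by \eqref{eq:empty_alternative} the polyhedron is nonempty. For unboundedness, exhibit a recession direction: the vector $(0,0,-1)$ is tangent to $E_u$, and along it every $\ell^+_{x,c}$ increases, so any point of $\mathcal P_c(A,\emptyset)$ can be translated indefinitely in that direction. Symmetrically, $(0,0,1)$ works when $A = \emptyset$.

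The main obstacle is the bookkeeping in the identification of $K\cap\Lin\braces{w}$. In particular, one must make sure that repeated points in $A \cap B$ cause no trouble; they do not, since they become distinct vectors $y^+$ and $y^-$. One must also confirm that the constraint $\alpha = \beta = 1/2$ forces both $A$ and $B$ to contribute, which is what makes $J_+ \cap J_-$, rather than a one-sided condition, appear.
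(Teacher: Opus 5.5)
Your proof is correct. Like the paper, you reduce everything to \Href{Lemma}{lem:farkas_alternative}, but you use a different encoding.

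The paper works in $\R^{2d}$. It maps $E_u$ onto $H_v$, $v=(u,-u)$, by $\pi(p,q,t)=\parenth{\frac{p+q}{2},\frac{p-q}{2}}$ and rewrites $\ell^\pm_{x,c}$ in terms of $\tau=(t-\iprod{q}{c})/2$. It then eliminates $t$ by checking when the interval of admissible $\tau$ is non-empty. This gives $\pi(\mathcal P_c(A,B))=Q(K_{A,B},v)$ with the product polytope $K_{A,B}=(\conv A-c)\times(\conv B-c)$. Identifying $K_{A,B}\cap\Lin\braces{v}$ with $J_+^c(A)\cap J_-^c(B)$ is then immediate, and non-emptiness and boundedness are transported through $\pi$. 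The cases $A=\emptyset$ or $B=\emptyset$ are handled separately by explicit rays in the $t$-direction.

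You instead keep $t$ as a coordinate in $\R^{2d+1}$. You note that $E_u=H_w$ with $w=(0,u,0)$ and that each $\ell^\pm_{x,c}$ is the linear functional $\iprod{\cdot}{y^\pm_x}$, so $\mathcal P_c(A,B)=Q(K,w)$ holds exactly.

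\begin{itemize}
\item The cost: your $K$ is not a product. You therefore need the small computation showing that the last coordinate $\mp 1$ forces the balanced weights $\alpha=\beta=\frac12$. This is the same signed-lifting device as in \Href{Lemma}{lem:segment_2dplus1}.
\item The gain: no elimination of $t$ and no projection are needed. The one-sided cases with $A\cup B\ne\emptyset$ are covered by the same Farkas argument. Only $A=B=\emptyset$ and the unboundedness claim need a direct check.
\end{itemize}

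The sentence introducing the map $\Lambda$ plays no role and can be deleted.
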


\begin{proof}
Suppose first that $A$ and $B$ are non-empty.  Put
\[
 v := (u, -u) \in \R^d \times \R^d.
\]
In the notation of \Href{Definition}{dfn:Q},
\[
 H_v
 = \braces{(a, b) \in \R^d \times \R^d :
             \iprod{a - b}{u} = 1}.
\]
Define the affine map
\[
 \pi : E_u \longrightarrow H_v,
 \qquad
 \pi(p, q, t)
 := \parenth{\frac{p + q}{2}, \frac{p - q}{2}}.
\]
It is onto: for $(a, b) \in H_v$, take
$p = a + b$, $q = a - b$, and any $t \in \R$.

Let $\pi(p, q, t) = (a, b)$ and set
\[
 \tau := \frac{t - \iprod{q}{c}}{2}.
\]
Direct substitution gives, for every $x \in \R^d$,
\[
 \ell_{x,c}^+(p, q, t)
 = 2\parenth{\iprod{a}{x - c} - \tau},
 \qquad
 \ell_{x,c}^-(p, q, t)
 = 2\parenth{\iprod{b}{x - c} + \tau}.
\]
For fixed $(a, b) \in H_v$, a suitable value of $t$, or equivalently
of $\tau$, exists if and only if
\[
 \frac{1}{2} - \min_{y \in B}\iprod{b}{y - c}
 \le \tau
 \le \min_{x \in A}\iprod{a}{x - c} - \frac{1}{2}.
\]
Hence,
\[
 \pi(\mathcal P_c(A, B))
  = \braces{(a, b) \in H_v :
  \min_{x \in A}\iprod{a}{x - c}
  + \min_{y \in B}\iprod{b}{y - c} \ge 1}
 = Q(K_{A,B}, v),
\]
where
\[
 K_{A,B} := (\conv A - c) \times (\conv B - c).
\]
Furthermore,
\[
 \lambda v \in K_{A,B}
 \quad\Longleftrightarrow\quad
 \lambda \in J_+^c(A) \cap J_-^c(B).
\]
Therefore, \eqref{eq:empty_alternative} follows from part \emph{(a)} of
\Href{Lemma}{lem:farkas_alternative}.  If $\mathcal P_c(A, B)$ is
bounded, then its image under $\pi$ is bounded, and part \emph{(b)}
gives \eqref{eq:bounded_alternative}.

Suppose now that $A = \emptyset$.  Taking $p = 0$, $q = u$, and $t$
sufficiently large gives a point of $\mathcal P_c(A, B)$; increasing
$t$ produces a ray contained in this polyhedron.  If $B = \emptyset$,
the same argument uses $t$ sufficiently negative and then decreases
$t$.  In either case, one of $J_+^c(A)$ and $J_-^c(B)$ is empty, so
\eqref{eq:empty_alternative} remains valid, while
\eqref{eq:bounded_alternative} is vacuous.
\end{proof}

We can now prove the fixed-center form of the escape lemma.

\begin{lem}[Fixed-center escape]
\label{lem:fixed_escape}
Let $I \subset \R^d$ be finite, and let $c \in \R^d$.  Then every
connected component of
\[
 E_u \setminus \bigcup_{x \in I}F_x^{\braces{c}}
 = \braces{z \in E_u :
  \varphi_x^{\braces{c}}(z) \ge 1
  \text{ for every } x \in I}
\]
is unbounded.
\end{lem}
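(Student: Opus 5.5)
The complement decomposes into sign polyhedra. Since $\varphi_x^{\braces{c}} = \max\braces{\ell_{x,c}^+, \ell_{x,c}^-}$, a point $z$ lies in the complement if and only if, for each $x \in I$, at least one of $\ell_{x,c}^+(z) \ge 1$ or $\ell_{x,c}^-(z) \ge 1$ holds. Hence
\[
 E_u \setminus \bigcup_{x \in I}F_x^{\braces{c}}
 = \bigcup_{(A,B)} \mathcal P_c(A, B),
\]
where $(A,B)$ ranges over pairs with $A \cup B = I$. This is a finite union of closed convex polyhedra.

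\emph{Step 1.} Take a connected component $Z$ and suppose, for contradiction, that $Z$ is bounded. The polyhedra are convex, hence connected, and $Z$ is a component of a finite union of closed sets. So $Z$ is the union of those non-empty $\mathcal P_c(A,B)$ that meet $Z$, and each of these is contained in $Z$. Hence every non-empty $\mathcal P_c(A,B) \subset Z$ is bounded.

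By the last clause of \Href{Lemma}{lem:sign_polyhedron}, both $A$ and $B$ are non-empty. By \eqref{eq:bounded_alternative}, each such polyhedron has $J_+^c(A)\cap J_-^c(B)\cap[1,\infty)\neq\emptyset$. That is, $c+\lambda u \in \conv A$ and $c-\lambda u\in\conv B$ for some $\lambda\ge1$.

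\emph{Step 2.} Pick a maximal such pair, with $A$ and $B$ enlarged as far as possible while the polyhedron still meets $Z$. The aim is a contradiction with boundedness, obtained by escaping along a direction in which $t \to \pm\infty$ or $p$ grows. The natural first try is a point $z_0 \in Z$ together with a perturbation argument.

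Choose $z_0 \in Z$ maximizing a linear functional, for instance $t$ or $\langle p, w\rangle$, over the compact set $Z$. At $z_0$, consider the sets of active indices
\[
 A_0=\braces{x : \ell^+_{x,c}(z_0)\ge1}, \qquad B_0=\braces{x : \ell^-_{x,c}(z_0)\ge1}.
\]
Here $A_0\cup B_0=I$, and $z_0\in\mathcal P_c(A_0,B_0)\subset Z$.

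By Step 1 there is $\lambda\ge1$ with $c+\lambda u=\sum\alpha_x x$ over $A_0$ and $c-\lambda u=\sum \beta_y y$ over $B_0$. Evaluate the identity
\[
 \sum\alpha_x \ell^+_{x,c}+\sum\beta_y\ell^-_{y,c}
\]
at a general $z=(p,q,t)$. The $t$ terms cancel, and the $p$ terms give $\langle p, 2c - 2c\rangle$ plus zero. The result is
\[
 \langle q, c+\lambda u\rangle - \langle q, c-\lambda u\rangle = 2\lambda\langle q,u\rangle = 2\lambda,
\]
a constant on all of $E_u$. At $z_0$ this constant is at least $2$, and it equals $2$ only when all the involved inequalities are tight.

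So the mechanism is a linear combination of the defining functions that is constant on $E_u$. Moving $z$ in any direction preserves the sum, so if some constraint increases, another must decrease. The plan is to use this to show that $z_0$ can be moved within $Z$ along a direction that strictly increases the chosen functional, contradicting maximality. Near $z_0$ the set $Z$ locally equals the union of the $\mathcal P_c(A,B)$ with $A \subset A_0$, $B \subset B_0$, $A\cup B=I$. A point $x\in A_0\cap B_0$ needs to satisfy only one of its two constraints.

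\emph{Main obstacle.} The hard step is this local move. I would handle it by an LP duality argument applied to the local cone of feasible directions at $z_0$. If no improving direction exists in any local piece $\mathcal P_c(A,B)$, the Farkas certificates give, for each splitting, a combination showing that $z_0$ is a maximizer. Combining these as above should force $\lambda<1$ for some splitting, contradicting the conclusion of Step 1 for that bounded piece via \eqref{eq:empty_alternative}.

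If this gets messy, the fallback is an induction on $\card I$. Remove a point $x$; the component $Z$ lies in a component of the complement for $I\setminus\braces{x}$, which is unbounded by induction. One then shows that the extra constraint ``$\ell^+_{x,c}\ge1$ or $\ell^-_{x,c}\ge1$'' cannot cut off a bounded piece, using that $\ell^+_{x,c}+\ell^-_{x,c}=2\langle p,x-c\rangle$ does not depend on $t$, so moving $t$ trades one of these constraints against the other.
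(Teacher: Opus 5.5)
\textbf{There is a genuine gap: Step~2 never produces the contradiction.}

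Your decomposition and Step~1 are sound. A bounded component $Z$ would be a union of non-empty bounded sign polyhedra. Each of these has $A,B\neq\emptyset$ and $J_+^c(A)\cap J_-^c(B)\cap[1,\infty)\neq\emptyset$.

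The identity $\sum\alpha_x\ell^+_{x,c}+\sum\beta_y\ell^-_{y,c}\equiv 2\lambda$ is just the certificate behind \eqref{eq:bounded_alternative}. With $\lambda\ge 1$ it is perfectly consistent with $z_0\in\mathcal P_c(A_0,B_0)$, so by itself it contradicts nothing. The local LP-duality step at a maximizer of $t$ is only announced ("should force $\lambda<1$"). You do not explain how the Farkas certificates of different splittings would combine. The fallback induction restates what must be proved. That $\ell^+_{x,c}+\ell^-_{x,c}$ is independent of $t$ does not prevent a move in $t$ from violating the constraints of the other points of $I$.

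\textbf{The missing idea.} Use \eqref{eq:empty_alternative} in the positive direction, together with the fact that $J_+^c(A)$ and $J_-^c(B)$ are intervals. Your instinct to enlarge the pair is right; what is missing is which side to enlarge and why nonemptiness survives.

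\begin{enumerate}[(1)]
\item If $\mathcal P_c(A,B)$ is non-empty and bounded, the two intervals have no common point in $(-\infty,1)$ but do have a common point $\mu\ge 1$.
\item Hence one of them, say $J_+^c(A)$, lies entirely in $[1,\infty)$. If both contained points $a,b<1$, both would contain $[\max\braces{a,b},\mu]$, which includes a common point below~$1$.
\item Then $J_+^c(A)\cap J_-^c(I)\cap(-\infty,1)=\emptyset$, so $\mathcal P_c(A,I)\neq\emptyset$ by \eqref{eq:empty_alternative}.
\item Since $\mathcal P_c(A,I)=\mathcal P_c(A,B)\cap\mathcal P_c(\emptyset,I)$, the convex polyhedron $\mathcal P_c(\emptyset,I)$ meets $Z$. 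It lies in the complement, so it lies in $Z$.
\item But $\mathcal P_c(\emptyset,I)$ is unbounded: every $\ell^-_{x,c}$ increases with $t$, so it contains rays in direction $(0,0,1)$. This contradicts the boundedness of $Z$.
\item If instead $J_-^c(B)\subset[1,\infty)$, argue symmetrically with $\mathcal P_c(I,B)$, $\mathcal P_c(I,\emptyset)$, and rays in direction $(0,0,-1)$.
\end{enumerate}

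With this, no maximizer $z_0$ and no local analysis are needed. This is the paper's argument, which phrases it directly: any point $z$ of the complement is joined inside $\mathcal P_c(A,B)$ to a point $w$ of $\mathcal P_c(A,I)$, and then escapes along the vertical ray from $w$.
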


\begin{proof}
The complement in the statement has the finite decomposition
\begin{equation}
 E_u \setminus \bigcup_{x \in I}F_x^{\braces{c}}
 = \bigcup_{A \sqcup B = I}\mathcal P_c(A, B),
 \label{eq:sign_decomposition}
\end{equation}
where the union is taken over all ordered partitions of $I$.  Indeed,
for each $x \in I$, one chooses a sign for which
$\ell_{x,c}^{\pm}(z) \ge 1$.

Fix
\[
 z \in E_u \setminus \bigcup_{x \in I}F_x^{\braces{c}}.
\]
Choose a sign map $\varepsilon_z : I \longrightarrow \braces{+, -}$
such that
\[
 \ell_{x,c}^{\varepsilon_z(x)}(z) \ge 1
 \qquad\text{for every } x \in I.
\]
If both signs are possible, choose either one, and put
\[
 A := \braces{x \in I : \varepsilon_z(x) = +},
 \qquad
 B := \braces{x \in I : \varepsilon_z(x) = -}.
\]
Then $I = A \sqcup B$ and $z \in \mathcal P_c(A, B)$.  By
\eqref{eq:sign_decomposition}, this polyhedron is contained in the
required complement.  Write $\mathcal P := \mathcal P_c(A, B)$.
If $\mathcal P$ is unbounded, its recession cone contains a non-zero
vector.  Therefore, the ray starting at $z$ in this recession direction
lies in the required complement.

It remains to consider the case in which $\mathcal P$ is bounded.
Then both $A$ and $B$ are non-empty.  By
\Href{Lemma}{lem:sign_polyhedron}, specifically by
\eqref{eq:empty_alternative} and \eqref{eq:bounded_alternative}, the
intervals $J_+^c(A)$ and $J_-^c(B)$ have a common point in
$[1, \infty)$, but no common point in $(-\infty, 1)$.  Since they are
intervals,
\begin{equation}
 J_+^c(A) \subset \braces{\lambda \in \R : \lambda \ge 1}
 \quad\text{or}\quad
 J_-^c(B) \subset \braces{\lambda \in \R : \lambda \ge 1}.
 \label{eq:safe_side}
\end{equation}

Assume first that the first inclusion in \eqref{eq:safe_side} holds.
Then \eqref{eq:empty_alternative} in
\Href{Lemma}{lem:sign_polyhedron}, applied to $A$ and $I$, shows that
$\mathcal P_c(A, I)$ is non-empty.  Choose
$w \in \mathcal P_c(A, I)$.  Since
\[
 \mathcal P_c(A, I)
 = \mathcal P_c(A, B)
 \cap \mathcal P_c(\emptyset, I),
\]
the segment $\brackets{z, w}$ lies in $\mathcal P_c(A, B)$, whereas
the ray
\[
 w + s(0, 0, 1),
 \qquad s \ge 0,
\]
lies in $\mathcal P_c(\emptyset, I)$.  Both polyhedra are contained in
$E_u \setminus \bigcup_{x \in I}F_x^{\braces{c}}$.  If the second
inclusion in \eqref{eq:safe_side} holds, choose
$w \in \mathcal P_c(I, B)$ and use
\[
 \mathcal P_c(I, B)
 = \mathcal P_c(A, B)
 \cap \mathcal P_c(I, \emptyset)
\]
together with the ray $w + s(0, 0, -1)$, $s \ge 0$.  Thus, every point of
$E_u \setminus \bigcup_{x \in I}F_x^{\braces{c}}$ can be joined within
this set to an unbounded ray.  Therefore, every connected component is
unbounded.
\end{proof}

The moving-center case follows by choosing a supporting point of $C$.

\begin{proof}[Proof of \Href{Lemma}{lem:moving_escape}]
For every $p \in \R^d$ and $c \in C$,
\[
 h_C(-p) \ge -\iprod{p}{c}.
\]
Consequently,
\[
 F_x^C \subset F_x^{\braces{c}}
 \qquad
 \text{for every } x \in \R^d.
\]
Hence,
\begin{equation}
 E_u \setminus \bigcup_{x \in I}F_x^{\braces{c}}
 \subset E_u \setminus \bigcup_{x \in I}F_x^C.
 \label{eq:monotonicity}
\end{equation}
Take
\[
 z = (p, q, t) \in E_u \setminus \bigcup_{x \in I}F_x^C.
\]
By compactness, we can choose
\[
 c_z \in \operatorname*{argmin}_{c \in C}\iprod{p}{c}.
\]
At $z$ we have
\[
 h_C(-p) = -\iprod{p}{c_z},
\]
and hence
\[
 \varphi_x^C(z) = \varphi_x^{\braces{c_z}}(z)
 \qquad
 \text{for every } x \in I.
\]
Thus,
\[
 z \in E_u \setminus \bigcup_{x \in I}F_x^{\braces{c_z}}.
\]
By \Href{Lemma}{lem:fixed_escape}, the connected component of this set
containing $z$ is unbounded.  By \eqref{eq:monotonicity}, it is a
connected subset of $E_u \setminus \bigcup_{x \in I}F_x^C$.  Hence, the
connected component of the latter set containing $z$ is unbounded.
\end{proof}

\section{Proof of the segment theorem}
\label{sec:segment_proof}

\begin{proof}[Proof of \Href{Theorem}{thm:segment}]
For each $i \in \brackets{2d}$, choose a finite subset $Y_i \subset X_i$
whose convex hull contains both $c_i-u$ and $c_i+u$. Such a set exists
by the definition of the convex hull. Replacing $X_i$ by $Y_i$, we may
therefore assume that every color class is finite.

Put
\[
 C = \conv\braces{c_1, \dots, c_{2d}}.
\]

Let
\[
 V := \braces{(i, x) : i \in \brackets{2d},\ x \in X_i},
 \qquad
 V_i := \braces{(i, x) : x \in X_i}.
\]
The sets $V_i$, $i \in \brackets{2d}$, form a partition of $V$ into
non-empty classes.
For $\xi = (i, x) \in V$, set $G_\xi := F_x^C$.  By
\Href{Lemma}{lem:dual_sets}, the set $G_\xi$ is non-empty, open, and
convex in $E_u$.
For $W \subset V$, put
\[
 I_W := \braces{x \in \R^d : (i, x) \in W
 \text{ for some } i}.
\]
Then
\[
 E_u \setminus \bigcup_{\xi \in W}G_\xi
 = E_u \setminus \bigcup_{x \in I_W}F_x^C.
\]
By \Href{Lemma}{lem:moving_escape}, every connected component of this
set is unbounded.  Since $\dim E_u = 2d$, condition \emph{(i)} of
\Href{Lemma}{lem:colorful_hole} holds with $m = 2d$.

For every $i \in \brackets{2d}$, hypothesis
\eqref{eq:segment_hypothesis} gives
\[
 c_i + \brackets{-u, u} \subset \conv X_i,
\]
and $c_i \in C$.  Hence, \Href{Corollary}{cor:finite_duality} yields
\[
 \bigcap_{\xi \in V_i}G_\xi
 = \bigcap_{x \in X_i}F_x^C
 = \emptyset.
\]
Thus, condition \emph{(ii)} of
\Href{Lemma}{lem:colorful_hole} also holds.  Applying that lemma, we
obtain points $x_i \in X_i$ such that
\[
 \bigcap_{i \in \brackets{2d}}G_{(i, x_i)}
 = \bigcap_{i \in \brackets{2d}}F_{x_i}^C
 = \emptyset.
\]
By \Href{Corollary}{cor:finite_duality}, there is a point $c \in C$
satisfying
\[
 c + \brackets{-u, u}
 \subset \conv\braces{x_1, \dots, x_{2d}},
\]
which is \eqref{eq:segment_conclusion}.
\end{proof}

\section{A colorful Helly theorem for cones}
\label{sec:cones}

We now prove \Href{Theorem}{thm:colorful_helly_cones}. We use the
following theorem of B\'ar\'any \cite[Theorem~1.2]{barany2024positive}.
Recall that $m(k,d) = \max\braces{d+1, 2(d - k+1)}$.

\begin{prp}\label{prp:helly_cones}
Fix $k \in \brackets{d-1}$. Assume $\mathcal F$ is a finite family of convex sets
in $\R^d$. If the intersection of any subfamily of at most $m(k,d)$ sets contains
a $k$-dimensional cone, then $\bigcap_{F \in \mathcal F} F$ contains a
$k$-dimensional cone.
\end{prp}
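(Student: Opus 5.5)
The plan is to split the cone condition into an apex part and a direction part. These two parts account for the two terms $d+1$ and $2(d-k+1)$ in $m(k,d)$.

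First assume all members of $\mathcal F$ are closed. A closed convex set $F$ contains a translate of a $k$-dimensional cone if and only if its recession cone $\reccone F$ has dimension at least $k$. Indeed, if $a + \poscone{A} \subset F$, then $\poscone{A} \subset \reccone F$; conversely $a + \reccone F \subset F$ for any $a \in F$. For a non-empty intersection of closed convex sets, $\reccone \bigcap F = \bigcap \reccone F$. So it suffices to prove two things:
\begin{enumerate}[(1)]
\item $\bigcap_{F\in\mathcal F} F \neq \emptyset$;
\item $\dim \bigcap_{F\in\mathcal F}\reccone F \ge k$.
\end{enumerate}
Part (1) is the classical Helly theorem, since every $d+1$ members intersect.

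For (2), put $K_F := \reccone F$ and pass to polars. The cone $K:=\bigcap K_F$ has dimension at least $k$ exactly when the lineality space of its polar $K^\circ = \overline{\sum_F K_F^\circ}$ has dimension at most $d-k$. Suppose instead that this lineality space contains a $(d-k+1)$-dimensional linear subspace $L$.

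The key step is a Steinitz-type selection. We want a subfamily of at most $2(d-k+1)$ members whose polars already produce $L$ in their sum. Then the intersection of their recession cones has dimension less than $k$, contradicting the hypothesis applied to that subfamily (its intersection is non-empty by Helly, so the recession-cone identity applies). To obtain the subfamily, project onto $L$. The cone $\sum_F K_F^\circ$, after closure, contains $L$. We would like a basis-like set of generators $g_1,\dots,g_N$, each taken from some $K_F^\circ$, whose positive hull is all of $L$. The classical Steinitz theorem in the space $L$ (\Href{Proposition}{prp:classical_steinitz}, applied to the positive hull, equivalently to the origin being interior to a convex hull) then reduces this to $2\dim L = 2(d-k+1)$ generators. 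These come from at most $2(d-k+1)$ different sets $F$.

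I expect the main obstacle to be the closure and non-closedness issues. First, $\sum K_F^\circ$ need not be closed, so $L$ lies only in its closure. Second, the members of $\mathcal F$ need not be closed, and for non-closed sets ``contains a cone'' is not a recession-cone condition.

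For the first issue, I would use that $\mathcal F$ is finite. I would replace the statement $L \subset \overline{\sum K_F^\circ}$ by the statement that $0$ lies in the relative interior, within $L$, of the projection of the sum. This is a statement about finitely many convex cones, and a separation argument like the one in the proof of \Href{Lemma}{lem:farkas_alternative} shows it is equivalent.

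For the second issue, I would reduce to closed sets by a standard finiteness device. Fix witnesses: for each subfamily $\mathcal G$ of at most $m(k,d)$ sets, choose a cone $a_{\mathcal G}+\poscone{A_{\mathcal G}}$ inside $\bigcap \mathcal G$. Then replace each $F$ by the closed convex hull of the finitely many cones chosen inside it, i.e.\ by $\conv$ of those cones, which is closed up to a polyhedral correction. This new set $F'$ still satisfies the hypothesis and satisfies $F' \subset F$. The non-closed boundary could create problems, and if it does I would instead pass to relative interiors of the chosen cones, keeping everything polyhedral. Checking that this reduction preserves the hypothesis for all small subfamilies is where I expect the details to be delicate.
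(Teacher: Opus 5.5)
Note first that the paper does not prove this proposition; it quotes it from B\'ar\'any. Your argument therefore has to stand on its own.

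\textbf{Step (2) has a genuine gap.} Your reduction, for closed members, to (1) and (2) is correct, and (1) is Helly. But your attribution of $d+1$ to the apex part and $2(d-k+1)$ to the directions is wrong: the term $d+1$ in $m(k,d)$ is needed for the directions too. No argument that uses the hypothesis only on subfamilies of at most $2(d-k+1)$ sets, plus Helly, can give (2).

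Take $d=4$, $k=3$, so that $2(d-k+1)=4<5=m(3,4)$. Consider the closed convex cones $F_i=\braces{x\in\R^4 \st x_i\le 0}$ for $i\in\brackets{4}$, and $F_5=\braces{x\in\R^4 \st x_1+x_2+x_3+x_4\ge 0}$. All five contain the origin, any four have a $4$-dimensional intersection, and $F_1\cap\cdots\cap F_5=\braces{0}$.

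Run your step on this family. The polars are the rays spanned by $e_1,\dots,e_4$ and $-\mathbf 1$, where $\mathbf 1=(1,1,1,1)$, and their sum is $\R^4$. For $L=\Lin\braces{e_1,e_2}$ the projections are $e_1,e_2,0,0,-(e_1+e_2)$, and Steinitz in $L$ selects the generators coming from $F_1,F_2,F_5$. Yet $\poscone{\braces{e_1,e_2,-\mathbf 1}}$ contains no line, and $F_1\cap F_2\cap F_5$ is $4$-dimensional, so there is no contradiction.

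The error is your claimed equivalence. Write $P_L$ for the orthogonal projection onto $L$ and $S=\sum_F K_F^\circ$. For $y\in L$ we have $\iprod{y}{P_L s}=\iprod{y}{s}$, so $P_L S=L$ holds if and only if $L\cap\bigcap_F K_F=\braces{0}$. What you need is $L\subset\overline S$, that is, $L\perp\bigcap_F K_F$. The first condition does not bound $\dim\bigcap_F K_F$, since a pointed full-dimensional cone can meet $L$ only at the origin.

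Nor can you ask for generators lying in $L$ and positively spanning $L$. The pieces of $K_F^\circ$ that represent $L$ lie in the whole lineality space $\Lambda$ of the sum, which can be larger than $L$. In the example $\Lambda=\R^4$, and no subset of the five generators positively spans a $2$-dimensional subspace. Steinitz in $\Lambda$ only gives $2\dim\Lambda$ generators, which can be as large as $2d$.

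\textbf{What is missing is a genuine selection lemma for positively spanning sets.}

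\emph{Closure of the sum.} This problem disappears after a finiteness reduction. For every small subfamily $\mathcal G$, choose a $k$-dimensional simplicial cone $C_{\mathcal G}\subset\bigcap_{F\in\mathcal G}\reccone{F}$. Then replace $\reccone{F}$ by the finitely generated cone spanned by the cones $C_{\mathcal G}$ with $\mathcal G\ni F$. The hypothesis survives, the conclusion transfers back, and sums of polyhedral cones are closed.

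\emph{The lemma itself.} Let $V$ be the finite set of generators of the polars and $\Lambda$ the lineality space of $\poscone{V}$. Then $V\cap\Lambda$ positively spans $\Lambda$. You must extract at most $\max\braces{d+1,2(d-k+1)}$ of these vectors whose positive hull still contains a linear subspace of dimension $d-k+1$. The example above is a positive basis of $\R^4$ with $d+1$ elements, none of whose proper subsets positively spans a nonzero subspace. So $d+1$ cannot be dropped, and the lemma needs an argument about the structure of positive bases, not Steinitz in a fixed $L$.

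\emph{Non-closed members.} Your reduction does not work as written. The convex hull of finitely many translated cones need not be closed, and its closure need not lie in $F$. Also, for arbitrary convex $F$, containing a $k$-dimensional cone is still equivalent to $\dim\reccone{\overline F}\ge k$, by \eqref{eq:relative_interior}. The real difficulty is that $\bigcap_F\overline F$ can be strictly larger than $\overline{\bigcap_F F}$.
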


We will use the classical Colorful Helly theorem of Lov\'asz; see
\cite{barany1982generalization}.
\begin{prp}[Colorful Helly theorem]\label{prp:colorful_Helly}
Let $\mathcal G_1, \dots, \mathcal G_{n+1}$ be finite families of convex sets in
an $n$-dimensional affine space. If the intersection of every rainbow selection is
non-empty, then the intersection of all sets in some family $\mathcal G_i$ is
non-empty.
\end{prp}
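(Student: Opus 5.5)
We will not use the topological machinery here. Instead we follow Lov\'asz's classical argument, which relies only on the ordinary Helly theorem in the $n$-dimensional space and a lexicographic extremal point.

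\emph{Reduction to compact sets.} First we reduce to the case in which all sets are polytopes. For every rainbow sub-selection whose intersection is non-empty, fix one point of that intersection. Replace each set $G$ by the convex hull of the finitely many fixed points that lie in $G$. The new sets are polytopes contained in the old ones. Every rainbow sub-selection that had a non-empty intersection still does, because its witness point lies in each of its sets. An intersection that was empty stays empty, since the new sets are smaller. So both the hypothesis and the negated conclusion survive, and we may assume all sets are compact.

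\emph{Basis lemma.} Identify the space with $\R^n$ and use the lexicographic order $<_{\mathrm{lex}}$. Every non-empty compact convex set has a unique lex-minimum. The set $\{x : x <_{\mathrm{lex}} p\}$ is convex for every $p$. We claim that if $S$ is a finite collection of at least $n+1$ compact convex sets with non-empty intersection and lex-minimum $p$, then some $n$ members of $S$ already have lex-minimum $p$. To see this, add the convex set $\{x <_{\mathrm{lex}} p\}$ to $S$; the intersection of the enlarged family is empty. By Helly's theorem, some at most $n+1$ members of the enlarged family have empty intersection, and this subfamily must contain the added half-space. The remaining at most $n$ members of $S$ therefore have lex-minimum $\ge p$. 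Their lex-minimum is also $\le p$, because their intersection contains $\bigcap S$. Padding with further members of $S$ if necessary gives exactly $n$ sets.

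\emph{Main argument.} Suppose, for a contradiction, that no family $\mathcal G_i$ has non-empty intersection. For each $j$, every rainbow sub-selection from $n$ of the $n+1$ colors, omitting color $j$, extends to a full rainbow selection and hence has non-empty intersection. Among all such $n$-set sub-selections, choose one, $S$, whose lex-minimum $p$ is lexicographically largest; say $S$ omits color $j$. Since $\bigcap \mathcal G_j=\emptyset$, some $F \in \mathcal G_j$ does not contain $p$. The collection $S\cup\{F\}$ is a full rainbow selection, so its intersection is non-empty. Its lex-minimum $p'$ satisfies $p' \ge_{\mathrm{lex}} p$ and $p'\ne p$, so $p' >_{\mathrm{lex}} p$. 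By the basis lemma, some $n$ sets of $S\cup\{F\}$ have lex-minimum $p'$, and these $n$ sets must include $F$, since otherwise they all lie in $S$ and their lex-minimum would be at most $p$. These $n$ sets come from distinct colors, so they form a rainbow sub-selection omitting one color whose lex-minimum $p'$ exceeds $p$. This contradicts the maximality of $p$.

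The main obstacle is the basis lemma, in particular checking that the lexicographic half-space is convex so that Helly's theorem applies; the compactness reduction is what guarantees that the lex-minima exist.
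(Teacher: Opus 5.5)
Your proof is correct. The paper does not prove this proposition: it quotes Lov\'asz's theorem with a reference to B\'ar\'any's 1982 paper and uses it as a black box in Section~7 (in $E_t$, with $n=2d-k$). So what you provide is a self-contained replacement for a citation, and there is no internal route to compare against.

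Your argument is the standard extremal one:
\begin{enumerate}[(1)]
\item shrink every set to the polytope spanned by witness points;
\item derive the lexicographic basis lemma from the ordinary Helly theorem, which applies because $\{x : x<_{\mathrm{lex}}p\}$ is $p$ plus the cone of lexicographically negative vectors, hence convex, and Helly's theorem for finite families needs no closedness;
\item take a rainbow $n$-sub-selection with lexicographically largest minimum.
\end{enumerate}
The details check out. In particular, the $n$ sets realizing $p'$ must contain $F$, because the only $n$-subcollection of $S\cup\{F\}$ avoiding $F$ is $S$ itself. It is cleanest to treat selections as labeled, since one set may occur in several families.

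In the context of this paper, your argument buys two things. First, the polytope reduction is exactly what justifies applying the proposition to the sets $G_F(t)$. These are bounded but need not be closed, because the original sets $F$ are arbitrary convex sets. Second, your proof uses only Helly's theorem, which fits the paper's remark that the cone theorem is independent of the topological part. The alternative available inside the paper would be to apply the Kalai--Meshulam proposition with $r=n$ to the nerve of the reduced sets. That nerve is $n$-Leray by the Nerve Theorem after a slight open thickening, but this route brings back the topological machinery.
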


We first recall two elementary facts about recession cones. For a non-empty closed
convex set $K \subset \R^d$, its \emph{recession cone} is
\[
\reccone{K} = \braces{v \in \R^d \st z+s v \in K \text{ for every }z \in K \text{ and }s \ge 0}.
\]
We write $\ri P$ for the relative interior of a convex set $P$.
If $P$ is non-empty and $p \in \ri P$, then
\begin{equation}\label{eq:relative_interior}
p+\reccone{\overline P} \subset \ri P.
\end{equation}
Indeed, $\ri P = \ri\overline P$, and a sufficiently small relative ball about $p$
lies in $\overline P$. Translating this ball by a vector in $\reccone{\overline
P}$ leaves it in $\overline P$.

Also, if $x_n \in P$, $t_n \to \infty$, and $x_n/t_n \to v$, then $v \in
\reccone{\overline P}$. To see this, fix $z \in \overline P$ and $s \ge 0$. For
all sufficiently large $n$, convexity gives
\[
\parenth{1-\frac{s}{t_n}}z+\frac{s}{t_n}x_n \in \overline P.
\]
Passing to the limit, we obtain $z+s v \in \overline P$.

\begin{proof}[Proof of \Href{Theorem}{thm:colorful_helly_cones}]
By \Href{Proposition}{prp:helly_cones}, the intersection of every full rainbow
selection contains a $k$-dimensional cone. If a color class is empty, the
conclusion is immediate. We therefore assume that all color classes are non-empty.

Put $P_i = \bigcap_{F \in \mathcal F_i} F$, $i \in \brackets{N}$, and assume that
none of the sets $P_i$ contains a $k$-dimensional cone. For every non-empty $P_i$,
define
\[
L_i = \Lin\parenth{\reccone{\overline{P_i}}}.
\]
By \eqref{eq:relative_interior}, $\dim L_i \le k - 1$. Indeed, $k$ linearly
independent vectors in $\reccone{\overline{P_i}}$ would generate a $k$-dimensional
cone whose translate is contained in $P_i$.

Choose a surjective linear map $\pi : \R^d \to \R^{k - 1}$ whose restriction to
each $L_i$ is injective. Such a map exists, since there are only finitely many
subspaces $L_i$, each of dimension at most $k - 1$. For $k = 1$, take the zero map
to $\R^0$.

Thus, two recession vectors of the same set $\overline{P_i}$ must coincide
whenever their images under $\pi$ coincide. We will use the rainbow assumption to
obtain two distinct such vectors in one color class.

For each rainbow selection $R$, choose a $k$-dimensional simplicial cone $C_R$
with apex at the origin and a point $a_R$ such that
\[
a_R+C_R \subset \bigcap_{F \in R} F.
\]
The restriction of $\pi$ to the $k$-dimensional space $\Lin C_R$ has a non-trivial
kernel. Hence, the subspace
\[
W_R = \Lin C_R \cap \ker\pi
\]
has dimension at least one. Choose a linear functional $\ell : \R^d \to \R$ which
does not vanish identically on any $W_R$. To justify this choice, observe that the
functionals vanishing on a fixed $W_R$ form a proper linear subspace of the dual
space. Finitely many proper linear subspaces cannot cover the dual space.

Consequently, the affine hyperplane of $\ker\pi$ given by
\[
H = \braces{z \in \ker\pi \st \ell(z) = 1}
\]
meets every $W_R$. Choose $z_R \in W_R \cap H$. In particular, $\ell(z_R) = 1$, so
all the vectors $z_R$ are non-zero and have the same normalization. Since
$C_R - C_R = \Lin C_R$,
we can write $z_R = u_R - w_R$ with $u_R, w_R \in C_R$. Thus,
\begin{equation}\label{eq:rainbow_directions}
u_R-w_R = z_R,\qquad \pi u_R = \pi w_R,\qquad \ell(u_R-w_R) = 1.
\end{equation}
Choose $M > 0$ satisfying
\[
2\enorm{a_R}+\enorm{u_R}+\enorm{w_R} \le M
\]
for every rainbow selection $R$.

For $t \ge 1$, consider the affine space
\[
E_t = \braces{(x, y) \in \R^d \times \R^d \st \pi x = \pi y,\quad \ell(x-y) = t}.
\]
The geometry of this lift is seen by regarding $(x, y)$ as the endpoints of the
directed segment from $y$ to $x$. The condition $\pi x = \pi y$ means that this
segment is parallel to $\ker\pi$, while $\ell(x-y) = t$ fixes its displacement as
measured by $\ell$. In the midpoint and difference coordinates
\[
c = \frac{x+y}{2},\qquad z = x-y,
\]
the affine space $E_t$ becomes $\R^d \times tH$: the midpoint is free, and the
difference lies in $tH$. Since $\dim H = d - k$, we obtain $\dim E_t = 2d - k = N
- 1$.

For each set $F$ in the original families, define the convex set
\[
G_F(t) = \braces{(x, y) \in E_t \st x, y \in F,\quad \enorm{x}+\enorm{y} \le Mt}.
\]
Thus $G_F(t)$ is a bounded part of the section $(F \times F) \cap E_t$ in
$\R^{2d}$. Its points represent segments contained in $F$, with direction and
normalization prescribed by $tH$. The common norm bound controls the positions of
both endpoints on the same scale as their displacement.

The intersection of every rainbow selection of these sets is non-empty. Indeed,
for the corresponding rainbow $R$, it contains the pair
\[
\parenth{a_R+t u_R,\ a_R+t w_R}.
\]
As $t$ varies, these pairs lie on a ray from the diagonal point $(a_R,a_R)$ in
$\R^{2d}$; their differences are $t z_R \in tH$. The defining linear equalities
follow from \eqref{eq:rainbow_directions}, and the norm bound follows from the
choice of $M$ and the inequality $t \ge 1$.

Applying \Href{Proposition}{prp:colorful_Helly} in $E_t$, we obtain an index $i(t)
\in \brackets{N}$ and points $x_t,y_t \in P_{i(t)}$ satisfying
\begin{equation}\label{eq:monochromatic_pairs}
\pi x_t = \pi y_t,\qquad \ell(x_t-y_t) = t,\qquad \enorm{x_t}+\enorm{y_t} \le Mt.
\end{equation}
Letting $t$ run through the positive integers, we can choose a sequence $t_n \to
\infty$ along which $i(t_n)$ is a fixed index $i$. The last inequality in
\eqref{eq:monochromatic_pairs} allows us to pass to a further subsequence such
that
\[
\frac{x_{t_n}}{t_n} \to v,\qquad \frac{y_{t_n}}{t_n} \to w.
\]
By the recession observation above, $v, w \in \reccone{\overline{P_i}} \subset
L_i$. Dividing the first two equalities in \eqref{eq:monochromatic_pairs} by $t_n$
and passing to the limit, we obtain
\[
\pi v = \pi w,\qquad \ell(v-w) = 1.
\]
Since $\pi$ is injective on $L_i$, the first equality implies $v = w$,
contradicting the second. This completes the proof.
\end{proof}

The common bound $\enorm{x}+\enorm{y} \le Mt$ ensures that the normalized pairs
have convergent subsequences. In particular, the proof applies to arbitrary convex
sets; it uses the closure of the monochromatic intersection only after this
intersection has been shown to be non-empty.

\section*{Acknowledgments}
The author thanks Márton Naszódi, Alexandr Polyanskii, and Roman Karasev for fruitful discussions. The author is deeply grateful to Imre Bárány for his kindness and advice, and for proofreading the first version of the manuscript.

\bibliographystyle{alpha}
\bibliography{../work_current/uvolit}

\begin{thebibliography}{DLLHRS17}

\bibitem[B{\'a}r82]{barany1982generalization}
Imre B{\'a}r{\'a}ny.
\newblock A generalization of {C}arath{\'e}odory's theorem.
\newblock {\em Discrete Mathematics}, 40(2-3):141--152, 1982.

\bibitem[B{\'a}r21]{barany2021combinatorial}
Imre B{\'a}r{\'a}ny.
\newblock {\em Combinatorial convexity}, volume~77.
\newblock American Mathematical Soc., 2021.

\bibitem[B{\'a}r24]{barany2024positive}
Imre B{\'a}r{\'a}ny.
\newblock Positive bases, cones, {H}elly-type theorems.
\newblock {\em Mathematica Slovaca}, 74(3):717--722, 2024.

\bibitem[BKP82]{barany1982quantitative}
Imre B{\'a}r{\'a}ny, Meir Katchalski, and Janos Pach.
\newblock {Q}uantitative {H}elly-type theorems.
\newblock {\em Proceedings of the American Mathematical Society},
  86(1):109--114, 1982.

\bibitem[BV04]{boyd2004convex}
Stephen Boyd and Lieven Vandenberghe.
\newblock {\em Convex optimization}.
\newblock Cambridge university press, 2004.

\bibitem[BW03]{boroczky2003covering}
K{\'a}roly B{\"o}r{\"o}czky and Gergely Wintsche.
\newblock Covering the sphere by equal spherical balls.
\newblock In {\em Discrete and computational geometry}, pages 235--251.
  Springer, 2003.

\bibitem[DFN21]{damasdi2021colorful}
G{\'a}bor Dam{\'a}sdi, Vikt{\'o}ria F{\"o}ldv{\'a}ri, and M{\'a}rton
  Nasz{\'o}di.
\newblock Colorful {H}elly-type theorems for the volume of intersections of
  convex bodies.
\newblock {\em Journal of Combinatorial Theory, Series A}, 178:105361, 2021.

\bibitem[DLLHRS17]{de2017quantitative}
Jes{\'u}s~A. De~Loera, Reuben~N. La~Haye, David Rolnick, and Pablo Sober{\'o}n.
\newblock Quantitative combinatorial geometry for continuous parameters.
\newblock {\em Discrete \& Computational Geometry}, 57(2):318--334, 2017.

\bibitem[Hat02]{hatcher2002algebraic}
Allen Hatcher.
\newblock {\em Algebraic topology}, volume~1.
\newblock Cambridge university press Cambridge, 2002.

\bibitem[IN22]{ivanov2022quantitative}
Grigory Ivanov and M{\'a}rton Nasz{\'o}di.
\newblock A quantitative {H}elly-type theorem: containment in a homothet.
\newblock {\em SIAM Journal on Discrete Mathematics}, 36(2):951--957, 2022.

\bibitem[IN24]{ivanov2024steinitz}
Grigory Ivanov and Márton Naszódi.
\newblock Quantitative {S}teinitz theorem: {A} polynomial bound.
\newblock {\em Bulletin of the London Mathematical Society}, 56(2):796--802,
  2024.

\bibitem[IN26]{Ivanov2026Hellynumbers}
Grigory Ivanov and Márton Naszódi.
\newblock Helly numbers for quantitative {H}elly-type results.
\newblock {\em Journal of Combinatorial Theory, Series A}, 220:106160, May
  2026.

\bibitem[Iva25]{ivanov_QST_polarity_2025}
Grigory Ivanov.
\newblock Quantitative {S}teinitz {T}heorem and {P}olarity.
\newblock {\em Discrete \& Computational Geometry}, May 2025.

\bibitem[Iva26]{Ivanov2026Colorful_pos_bas}
Grigory Ivanov.
\newblock Colorful positive bases decomposition and {H}elly-type results for
  cones.
\newblock {\em Linear Algebra and its Applications}, 732:108–125, March 2026.

\bibitem[Kat78]{katchalski1978helly}
Meir Katchalski.
\newblock A {H}elly type theorem for convex sets.
\newblock {\em Canadian Mathematical Bulletin}, 21(1):121--123, 1978.

\bibitem[KM05]{kalai2005topological}
Gil Kalai and Roy Meshulam.
\newblock A topological colorful {H}elly theorem.
\newblock {\em Advances in Mathematics}, 191(2):305--311, 2005.

\bibitem[Sob16]{soberon2016helly}
Pablo Sober{\'o}n.
\newblock Helly-type theorems for the diameter.
\newblock {\em Bulletin of the London Mathematical Society}, 48(4):577--588,
  2016.

\bibitem[Ste13]{steinitz1913bedingt}
Ernst Steinitz.
\newblock Bedingt konvergente {R}eihen und konvexe {S}ysteme.
\newblock {\em J. Reine Angew. Math.}, 143:128--176, 1913.

\end{thebibliography}

\end{document}